\documentclass[12pt]{article}

\usepackage[T1]{fontenc}
\usepackage{amsfonts}
\usepackage{amscd}
\usepackage{amsmath}
\usepackage{epsfig}
\usepackage{amsthm}
\usepackage{amssymb}
\usepackage{amsbsy}
\usepackage{latexsym}
\usepackage{eucal}
\usepackage{mathrsfs,bm}
\usepackage{mathtools}
\usepackage[all, knot]{xy}
\usepackage{tikz}
\usetikzlibrary{arrows.meta}
\usepackage[colorlinks]{hyperref}
\usepackage{color}

\xyoption{arc}
\xyoption{color}
\xyoption{line}

\newtheorem{theorem}{Theorem}[section]

\newtheorem{proposition}[theorem]{Proposition}
\newtheorem{corollary}[theorem]{Corollary}
\newtheorem{remark}[theorem]{Remark}
\newtheorem{lemma}[theorem]{Lemma}

\def\be{\begin{equation}}
\def\ee{\end{equation}}
\def\ba{\begin{aligned}}
\def\ea{\end{aligned}}
\def\ben{\begin{displaymath}}
\def\een{\end{displaymath}}
\def\baa{\begin{eqnarray}}
\def\eaa{\end{eqnarray}}

\renewcommand{\leq}{\leqslant}
\renewcommand{\geq}{\geqslant}

\newcommand{\Ind}{\operatorname{Ind}}

\newcommand{\Area}{\operatorname{Area}}
\newcommand{\Reg}{\operatorname{Reg}}

\makeatletter
\@addtoreset{equation}{section}
\makeatother

\title{\bfseries Spectral determinants of the Bolza surface and the Klein quartic}
\author{Victor Kalvin}
\date{}

\begin{document}

\maketitle

\begin{abstract}
We obtain closed explicit formulas for the spectral determinants of the smooth hyperbolic Bolza
surface and the Klein quartic. In each case, a multiplicative relation expresses the determinant of
the surface in terms of determinants of singular quotient orbifolds of genera zero and one. The
elliptic factors are evaluated by applying the singular Polyakov anomaly formula to explicit Belyi
maps on CM elliptic curves of discriminants $-8$ and $-7$, while the genus-zero factors are
evaluated by explicit determinant formulas for constant-curvature spheres with conical singularities. The same
multiplicative relations hold fibrewise on the corresponding equisymmetric deformation strata and
yield determinant and first-variation identities. We also prove that every compact quasiplatonic
hyperbolic surface is a critical point of the spectral determinant on its Teichm\"uller space; in
particular, this applies to the Bolza surface and the Klein quartic.
\end{abstract}

\section{Introduction}

For a closed Riemannian surface $M$, let
\be\label{ZetaDet}
 \det\Delta_M=\exp\bigl(-\zeta_M'(0)\bigr),
 \qquad
 \zeta_M(s)=\sum_{\lambda>0}\lambda^{-s},
\ee
where the sum is over the positive eigenvalues of the Friedrichs Laplacian $\Delta_M$. We find
closed explicit formulas for $\det\Delta_M$ when $M$ is the Bolza surface or the Klein quartic
with its smooth hyperbolic metric. These are the maximally symmetric compact Riemann surfaces of
genera two and three, the two lowest genera admitting smooth hyperbolic metrics. To the best of our
knowledge, no spectral determinant of a smooth compact hyperbolic surface has previously been
evaluated in closed form. The classical determinant--Selberg-zeta identity
\cite{DHokerPhong,SarnakDeterminants} expresses $\det\Delta_M$, up to a known factor depending
only on the genus, as $Z'_{\mathrm{Sel},M}(1)$, a global invariant determined by the primitive
length spectrum, but does not evaluate the latter.  Consequently, our formulas
also give closed explicit values of $Z'_{\mathrm{Sel},B}(1)$ and $Z'_{\mathrm{Sel},K}(1)$.

This work continues a program initiated in \cite{KalvinJFA} with the Polyakov anomaly formula for
metrics with cone singularities and developed further in \cite{KalvinCV,KalvinASNS} through explicit
uniformization by Belyi functions. For the corresponding formula for smooth metrics, see, e.g.,
\cite{OsgoodPhillipsSarnak}. The singular anomaly formula expresses the determinant in terms of a
reference determinant, a global conformal integral, and explicit local contributions at the cone
points. It leads to a closed evaluation whenever these data are known explicitly.

The basic solvable geometry is the constant-curvature sphere with three cone singularities. In
\cite{KalvinCV}, its determinant and the regular parts of its conformal factor were found in closed
form. The crucial step was the explicit evaluation of the classical Liouville action, which supplies
the global integral in the anomaly formula. This was achieved in \cite[Section~3]{KalvinCV} with the
help of the Zamolodchikov--Zamolodchikov variational relations \cite{ZamolodchikovZamolodchikov}.

In \cite{KalvinASNS}, the program was extended from the three-cone sphere to Platonic surfaces by
using Belyi functions, that is, holomorphic maps to $\mathbb P^1$ whose critical values lie in
$\{0,1,\infty\}$. In the Platonic cases the required Belyi functions are the classical polyhedral
functions found by Schwarz and Klein \cite{Schwarz,KleinIcosahedron}. The corresponding pullback of
a constant-curvature three-cone metric produces the prescribed constant-curvature metrics with cone
singularities on the Platonic surfaces. The ramification data determine the resulting cone angles,
and the singular anomaly formula transports the determinant together with the local and global
conformal data through the covering. This yields explicit determinant formulas for spherical,
Euclidean, and hyperbolic Platonic surfaces and their singular deformations~\cite{KalvinASNS}. For
the resulting genus-zero pullbacks, the accessory parameters are explicit
\cite[Lemma~4.1]{KalvinASNS}; for the classical relation between accessory parameters and the
Liouville action, see \cite{TakhtajanZograf}.

In the present paper we pass from these singular model geometries to smooth compact hyperbolic
surfaces. Direct uniformization in higher genus does not provide the global and local conformal data
required by the anomaly formula in closed form. We avoid this problem by reducing the determinant of
the original surface to determinants of singular quotient orbifolds. Namely, a virtual relation
between permutation representations of the isometry group gives, by Frobenius reciprocity, an exact
relation between the corresponding spectral zeta functions, and hence a multiplicative relation
between their determinants. This is closely related to Sunada's argument and may be viewed as a
spectral version of Artin formalism; see \cite{Sunada,VenkovZograf}.

For the Bolza surface, a Klein four-group reduces the determinant to one elliptic quotient $T_B$
with two cone points and two genus-zero quotients. For the Klein quartic, a permutation-character
identity in $PSL(2,7)$ leaves an elliptic quotient $T_K$ and two three-cone spheres. In both
cases the resulting quotient determinants can be evaluated explicitly. The genus-zero factors lie in
the three-cone and Platonic framework of \cite{KalvinCV,KalvinASNS}, while the elliptic factors are
singular CM tori of discriminants $-8$ and $-7$. The quotient lattices and the resulting
determinant identities are summarized in Figure~\ref{fig:quotient-lattices}.

\begin{figure}[t]
\centering
\begin{minipage}[t]{0.49\textwidth}
\centering
\begin{tikzpicture}[
  x=1cm,y=1cm,
  quotient/.style={
    draw,
    rounded corners=1.5pt,
    align=center,
    inner sep=3pt,
    font=\scriptsize
  },
  map/.style={-{Latex[length=4pt]},thin},
  degree/.style={font=\scriptsize,fill=white,inner sep=1pt}
]
\node[quotient] (B) at (0,4.2)
  {$B$\\ $g=2$};
\node[quotient] (T1) at (-2.5,2.7)
  {$T_1=B/\langle\sigma\rangle$\\ $(1;2,2)$};
\node[quotient] (Sh) at (0,2.7)
  {$S_h=B/\langle h\rangle$\\ $(0;2^6)$};
\node[quotient] (T2) at (2.5,2.7)
  {$T_2=B/\langle h\sigma\rangle$\\ $(1;2,2)$};
\node[quotient] (SV) at (0,1.2)
  {$S_V=B/V_4$\\ $(0;2^5)$};
\node[quotient] (Q238) at (0,-0.3)
  {$B/\operatorname{Aut}^+(B)$\\
   $\mathbb P^1(2,3,8)$};

\draw[map] (B) -- node[degree] {$2$} (T1);
\draw[map] (B) -- node[degree] {$2$} (Sh);
\draw[map] (B) -- node[degree] {$2$} (T2);
\draw[map] (T1) -- node[degree] {$2$} (SV);
\draw[map] (Sh) -- node[degree] {$2$} (SV);
\draw[map] (T2) -- node[degree] {$2$} (SV);
\draw[map] (SV) -- node[degree] {$12$} (Q238);
\end{tikzpicture}

\smallskip
$\displaystyle
 \det\Delta_B=
 \frac{\det\Delta_{T_1}\det\Delta_{T_2}\det\Delta_{S_h}}
      {(\det\Delta_{S_V})^2}.
$

\smallskip
\textbf{(a) The Bolza surface.}
\end{minipage}\hfill
\begin{minipage}[t]{0.49\textwidth}
\centering
\begin{tikzpicture}[
  x=1cm,y=1cm,
  quotient/.style={
    draw,
    rounded corners=1.5pt,
    align=center,
    inner sep=3pt,
    font=\scriptsize
  },
  map/.style={-{Latex[length=4pt]},thin},
  degree/.style={font=\scriptsize,fill=white,inner sep=1pt}
]
\node[quotient] (K) at (0,4.2)
  {$K$\\ $g=3$};
\node[quotient] (TK) at (-1.8,2.7)
  {$T_K=K/C_3$\\ $(1;3,3)$};
\node[quotient] (S777) at (1.8,2.7)
  {$S_{7,7,7}=K/C_7$\\ $(0;7,7,7)$};
\node[quotient] (S337) at (0,1.2)
  {$S_{3,3,7}=K/F_{21}$\\ $(0;3,3,7)$};
\node[quotient] (Q237) at (0,-0.3)
  {$K/PSL(2,7)$\\
   $\mathbb P^1(2,3,7)$};

\draw[map] (K) -- node[degree] {$3$} (TK);
\draw[map] (K) -- node[degree] {$7$} (S777);
\draw[map] (TK) -- node[degree] {$7$} (S337);
\draw[map] (S777) -- node[degree] {$3$} (S337);
\draw[map] (S337) -- node[degree] {$8$} (Q237);
\end{tikzpicture}

\smallskip
$\displaystyle
 \det\Delta_K=
 \frac{(\det\Delta_{T_K})^3\det\Delta_{S_{7,7,7}}}
      {(\det\Delta_{S_{3,3,7}})^3}.
$

\smallskip
\textbf{(b) The Klein quartic.}
\end{minipage}
\caption{The quotient lattices used in the two spectral reductions.
An arrow labelled $d$ is a quotient map of degree $d$.
The arrows describe the geometric factorizations through symmetry
quotients.  The exponents in the determinant identities do not arise
from the degrees of these maps: they are determined by the virtual
permutation-representation relations
\eqref{eq:v4-relation} and
\eqref{eq:klein-character-relation}, applied separately on each
eigenspace of the relevant Laplacian.}
\label{fig:quotient-lattices}
\end{figure}

To evaluate the genus-one factors, we use the singular Polyakov anomaly formula \cite{KalvinJFA} to
compare the determinant of the singular hyperbolic metric with that of the flat metric of a
holomorphic differential. The Liouville equation and the norm of the derivative of the Belyi map
reduce the global integral and the local coefficients on the torus to the three-cone data found
explicitly in \cite{KalvinCV}. The only remaining spectral input is the determinant of the flat
reference torus $\mathbb C/(\mathbb Z+\tau\mathbb Z)$, where $\tau\in\mathbb H$ is the ratio of
its periods after one period has been normalized to $1$. Algebraic models from
\cite{FiteSutherland,KoziarzRitoRoulleau} and \cite{HoshinoNakamura} give the period ratios
$$
 \tau_B=i\sqrt2
 \quad\text{and}\quad
 \tau_K=\frac{1+i\sqrt7}{2}
$$
for $T_B$ and $T_K$, respectively. The Kronecker limit formula expresses the determinants of the
flat reference tori explicitly in terms of their areas, period ratios, and the Dedekind eta
function; see, e.g., \cite{OsgoodPhillipsSarnak}.

Substituting the genus-zero and elliptic determinant formulas into the two spectral relations, we
obtain closed explicit formulas for the determinants of the Bolza surface and the Klein quartic.
They are stated in Theorems~\ref{thm:bolza-final} and~\ref{thm:klein-final}.

The determinants $\det\Delta_B$ and $\det\Delta_K$ found here can themselves be used as smooth
reference determinants in the singular anomaly formula. The determinant of any singular metric on
$B$ or $K$, conformal to the corresponding hyperbolic metric, is then reduced to its global
conformal integral and local cone data. Whenever these data are explicit, this gives a further
closed determinant evaluation in genus two or three.

The representation-theoretic reduction remains valid under deformations preserving the topological
type of the group action. We apply it to the $V_4$-actions on the Bolza surface and the Klein
quartic, and obtain determinant and first-variation identities on equisymmetric strata of complex
dimensions $2$ and $3$, respectively. Independently of this reduction, we prove that every
compact quasiplatonic hyperbolic surface is a critical point of the spectral determinant on its
Teichm\"uller space.

This paper is organized as follows. Section~2 develops the virtual spectral reduction and collects
the explicit singular and elliptic input common to both calculations. Section~3 carries out the
reduction and evaluates the quotient determinants for the Bolza surface. Section~4 does the same for
the Klein quartic. Section~5 derives closed explicit values of the derivatives at $s=1$ of the
corresponding Selberg zeta functions and records the associated Artin factorization identities.
Section~6 extends the determinant identities to the equisymmetric deformation strata through the two
surfaces, derives the corresponding first-variation identities, and identifies their quasiplatonic
points. Section~7 proves the criticality theorem for compact quasiplatonic hyperbolic surfaces and
records its consequences for the quotient first variations. The appendix contains a sketch of an
alternative $V_4$-reduction of the determinant of the Klein quartic, providing an independent
check of the main calculation.

\section{Preliminaries}

\subsection{Spectral reduction}

Let a finite group $G$ act isometrically on a closed Riemannian manifold $X$. The quotients
below are understood as Riemannian orbifolds, and their Laplacians are the Friedrichs Laplacians.
For a subgroup $H\leq G$, let $\pi_H:X\to X/H$ be the quotient map. Pullback defines a unitary
map
$$
 U_H:L^2(X/H)\longrightarrow L^2(X)^H,
 \qquad U_Hu=|H|^{-1/2}u\circ\pi_H.
$$
Pullback also identifies the closed Dirichlet form on $X/H$ with the restriction to $L^2(X)^H$
of the closed Dirichlet form on $X$. The associated self-adjoint operators are the corresponding
Friedrichs Laplacians. Consequently,
$$
 U_H\Delta_{X/H}U_H^{-1}
 =\Delta_X\big|_{L^2(X)^H}.
$$
Thus the spectrum of $X/H$, counted with multiplicities, coincides with the spectrum of the
restriction of $\Delta_X$ to the $H$-invariant functions on $X$.

We shall use the following representation-theoretic form of Sunada's argument \cite{Sunada}; the
short proof is included for completeness.
\begin{lemma}\label{lem:artin}
Suppose that
\be\label{eq:representation-relation}
 \sum_H a_H\,\Ind_H^G\mathbf 1=0
\ee
as a virtual representation of $G$, where $a_H\in\mathbb Z$. Then the spectral zeta functions
$\zeta_{X/H}$ and the zeta-regularized determinants $\det\Delta_{X/H}$ satisfy
\be\label{eq:zeta-relation}
 \sum_H a_H\,\zeta_{X/H}(s)=0
\ee
and
\be\label{eq:det-relation-general}
 \prod_H\bigl(\det\Delta_{X/H}\bigr)^{a_H}=1.
\ee
\end{lemma}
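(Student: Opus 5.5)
The plan is to decompose $L^2(X)$ spectrally and compare multiplicities eigenvalue by eigenvalue. Since $X$ is closed, $\Delta_X$ has discrete spectrum, and each eigenspace $E_\lambda=\ker(\Delta_X-\lambda)$ is finite-dimensional. Because $G$ acts by isometries, it commutes with $\Delta_X$, so each $E_\lambda$ is a finite-dimensional representation of $G$. By the unitary equivalence $U_H\Delta_{X/H}U_H^{-1}=\Delta_X|_{L^2(X)^H}$ recorded above, the multiplicity of $\lambda$ in the spectrum of $\Delta_{X/H}$ equals $\dim E_\lambda^H$.

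The key step is Frobenius reciprocity. For any finite-dimensional representation $V$ of $G$,
\[
\dim V^H=\dim\operatorname{Hom}_H(\mathbf 1,\operatorname{Res}^G_H V)=\dim\operatorname{Hom}_G(\Ind_H^G\mathbf 1,V)=\langle \chi_{\Ind_H^G\mathbf 1},\chi_V\rangle_G .
\]
The pairing $\langle\cdot,\chi_V\rangle_G$ is additive on virtual characters. Applying it to the relation \eqref{eq:representation-relation} therefore gives
\[
\sum_H a_H\dim E_\lambda^H=0\quad\text{for every }\lambda\geq 0 .
\]

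Summing $\lambda^{-s}$ over $\lambda>0$ with these multiplicities gives \eqref{eq:zeta-relation} for $\operatorname{Re}s>1$, where all the Dirichlet series converge absolutely by Weyl's law. The sum over $H$ is finite, so this is a finite linear combination of absolutely convergent series, and the rearrangement is harmless. The relation then extends to the meromorphic continuations by uniqueness of analytic continuation. Each $\zeta_{X/H}$ is regular at $s=0$: this is standard for the Friedrichs Laplacian on a compact two-dimensional orbifold, for example via the heat expansion, or simply because it is implicit in the definition \eqref{ZetaDet} used throughout. Differentiating at $s=0$ gives $\sum_H a_H\zeta'_{X/H}(0)=0$, and exponentiating gives \eqref{eq:det-relation-general}.

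I do not expect a serious obstacle. The only care needed is with the zero eigenvalue and with convergence. The relation at $\lambda=0$ holds automatically, since there $\dim E_0^H=1$ for all $H$, so $\sum_H a_H=0$; this is also the dimension count of the virtual relation. In any case $\lambda=0$ is excluded from the zeta functions. The analytic-continuation step is routine once regularity at $0$ is granted.
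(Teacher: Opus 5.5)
Your proof is correct and follows the paper's argument: you identify $\dim\ker(\Delta_{X/H}-\lambda)=\dim E_\lambda^H$ via the unitary map $U_H$, pair the virtual relation with each eigenspace using Frobenius reciprocity, and then sum and differentiate the continued zeta identity at $s=0$. One small misattribution: $\sum_H a_H=0$ comes from pairing the relation with the trivial character, since each $\Ind_H^G\mathbf 1$ contains $\mathbf 1$ exactly once, whereas the dimension count gives $\sum_H a_H[G:H]=0$; this does not affect the proof, because $\lambda=0$ is excluded from the zeta functions.
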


\begin{proof}
Put $V_\lambda=\ker(\Delta_X-\lambda)$. Since the action of $G$ commutes with $\Delta_X$, the
eigenspace $V_\lambda$ is a finite-dimensional $G$-representation. Frobenius reciprocity gives
$$
 \dim\ker(\Delta_{X/H}-\lambda)
 =\dim V_\lambda^H
 =\left\langle V_\lambda,\Ind_H^G\mathbf 1
  \right\rangle_G,
$$
where $\langle\cdot,\cdot\rangle_G$ is the usual inner product of characters of $G$. Taking the
inner product of \eqref{eq:representation-relation} with $V_\lambda$, we obtain the corresponding
identity of multiplicities for every eigenvalue. This implies \eqref{eq:zeta-relation}.
Differentiating its meromorphic continuation at $s=0$, we obtain \eqref{eq:det-relation-general}.
The same multiplicity identity holds for the zero eigenvalue, which is omitted from the determinants
as in \eqref{ZetaDet}.
\end{proof}

In what follows, we choose the group and the permutation-representation relation to suit the
quotient geometry. For the Bolza surface we use a Klein four-group, whereas for the Klein quartic we
use a relation in $PSL(2,7)$. The elementary $V_4$-relation reappears in Section~6 on the
equisymmetric components through both surfaces, where it involves only quotient orbifolds of genera
zero and one.

Relations of this kind have a classical algebraic counterpart. Idempotent relations in
$\mathbb Q[G]$ give isogenies between Jacobians of quotient curves \cite{KaniRosen}. In
particular, the $V_4$-relation \eqref{eq:v4-relation} underlies the decomposition of the Jacobian
of the Bolza curve into two elliptic factors; an explicit group-algebra decomposition is given in
\cite{Jimenez}. For the Klein quartic, the corresponding algebraic statement
$\operatorname{Jac}(K)\sim T_K^3$, where $T_K$ has CM discriminant $-7$, together with an
explicit map $K\to T_K$, is given in \cite{FiteLorenzoGarciaSutherland}.

Let us stress that these algebraic decompositions concern only the action on first cohomology and
the period lattice. They give neither relations between the full spectra of the quotient Laplacians
nor their spectral determinants. Lemma~\ref{lem:artin}, on the other hand, applies the
permutation-representation relation to every eigenspace of $\Delta_X$. The resulting determinant
identities become explicit after the singular quotient determinants are evaluated.

\subsection{Singular anomaly formula}

The second analytic input used below is the singular Polyakov anomaly
formula~\cite{KalvinJFA}. 

 Let \(m_0\) be a smooth conformal metric on a closed Riemann
surface \(M\), and let \(m=e^{2\sigma}m_0\) be a conformal metric with
possible cone singularities at distinct points \(P_1,\ldots,P_n\).
In a local holomorphic coordinate \(z\) centered at \(P_j\), write
\[
 m=|z|^{2\beta_j}e^{2u_j(z)}|dz|^2,
 \qquad
 m_0=e^{2v_j(z)}|dz|^2,
 \qquad \beta_j>-1.
\]
For \(\beta_j\ne0\), the point \(P_j\) is a cone singularity of order
\(\beta_j\) and total angle \(2\pi(\beta_j+1)\); the value
\(\beta_j=0\) corresponds to a regular point.  The cone data are
encoded by the divisor
$\boldsymbol\beta=\sum_{j=1}^n\beta_j\cdot P_j$ of degree $|\boldsymbol\beta|=\sum_{j=1}^n\beta_j$, 
and \(m\) is said to represent \(\boldsymbol\beta\).

Let \(\Delta_m\) denote the Friedrichs Laplacian of the singular
metric \(m\), and let \(\Delta_{m_0}\) denote the Laplace--Beltrami
operator of the smooth metric \(m_0\).  Their determinants are
understood as in \eqref{ZetaDet}.

Let $K$ be the regularized Gaussian curvature of $m$, with the delta masses at the cone points $P_j$
omitted, and let $K_0$ be the Gaussian curvature of $m_0$. Denote the corresponding area forms
by $dA$ and $dA_0$. The anomaly formula from~\cite[Theorem~1.1]{KalvinJFA} reads
\be\label{eq:conical-anomaly}
\ba
 \log\frac{\det\Delta_m/\Area(M,m)}
 {\det\Delta_{m_0}/\Area(M,m_0)}
={}&-\frac{1}{12\pi}
 \left(\int_M K\sigma\,dA+\int_M K_0\sigma\,dA_0\right)
\\
&+\frac16\sum_{j=1}^n\beta_j
 \left(\frac{u_j(0)}{\beta_j+1}-v_j(0)\right)
 -\sum_{j=1}^n C(\beta_j).
\ea
\ee
Here the universal local contribution of a cone point of order $\beta>-1$ is
\be\label{eq:conical-constant}
 C(\beta)=
 2\zeta'_B(0;\beta+1,1,1)-2\zeta'_R(-1)
 -\frac{\beta^2}{6(\beta+1)}\log2-\frac{\beta}{12}
 +\frac12\log(\beta+1),
\ee
where $\zeta_B$ and $\zeta_R$ are the Barnes double and Riemann zeta functions, respectively.
The regularity conditions on \(u_j\) at \(z=0\) required in \cite{KalvinJFA} hold automatically for the constant-curvature metrics considered throughout this paper.

In particular, if a metric $m$ is multiplied by a constant $c>0$, then
$$
 \Delta_{cm}=c^{-1}\Delta_m,
 \qquad
 \zeta_{cm}(s)=c^s\zeta_m(s),
$$
and hence
\be\label{eq:determinant-scaling}
 \log\det\Delta_{cm}
 =\log\det\Delta_m-\zeta_m(0)\log c.
\ee
For a metric $m$ representing a divisor
$\boldsymbol\beta$, the value entering this formula is
\be\label{eq:zeta-zero}
 \zeta_m(0)
 =\frac{\chi_{\boldsymbol\beta}(M)}6
 -\frac1{12}\sum_{j=1}^n
 \left(\beta_j+1-\frac1{\beta_j+1}\right)-1,
 \qquad
 \chi_{\boldsymbol\beta}(M)
 =\chi(M)+|\boldsymbol\beta|,
\ee
where $\chi(M)$ is the topological  Euler characteristic of $M$. We shall use \eqref{eq:determinant-scaling}--\eqref{eq:zeta-zero} below when rescaling hyperbolic metrics to metrics of curvature $-1$.

\subsection{Three-cone sphere}

In this subsection we recall the results for the hyperbolic three-cone sphere that will be used
throughout the paper.

Let $m_{\boldsymbol\beta}=e^{2\varphi}|dt|^2$ be the unique unit-area hyperbolic metric on
$\mathbb P^1$ representing the divisor
$$
\boldsymbol\beta= \beta_0\cdot0+\beta_1\cdot1+\beta_\infty\cdot\infty
$$
of degree $|\boldsymbol\beta|= \beta_0+\beta_1+\beta_\infty<-2$. The cone
points are located at $0$, $1$, and $\infty$; this can always be achieved by a M\"obius transformation.

In the customary orbifold notation, used already in the Introduction, a three-cone sphere with cone
angles $2\pi/p$, $2\pi/q$, and $2\pi/r$ is denoted by $\mathbb P^1(p,q,r)$. In terms of the
divisor $\boldsymbol\beta$, the corresponding cone orders are $1/p-1$, $1/q-1$, and $1/r-1$.

The sphere $(\mathbb P^1,m_{\boldsymbol\beta})$ is isometric to the double of the hyperbolic
triangle with angles $\pi(\beta_0+1)$, $\pi(\beta_1+1)$, and $\pi(\beta_\infty+1)$. The metric
potential $\varphi$ satisfies the three-cone Liouville equation
\be\label{eq:three-cone-liouville}
 e^{-2\varphi}\bigl(-4\partial_t\partial_{\bar t}\varphi\bigr)
 =2\pi\bigl(|\boldsymbol\beta|+2\bigr),
 \qquad t\in\mathbb C\setminus\{0,1\}.
\ee
The Liouville equation \eqref{eq:three-cone-liouville} can be solved explicitly. Namely, the metric
$m_{\boldsymbol\beta}$ is the pullback of the model hyperbolic metric
$$
4\bigl(1+2\pi(|\boldsymbol\beta|+2)|w|^2\bigr)^{-2} {|dw|^2}
$$
by an appropriately normalized Schwarz triangle function. For details, we refer to
\cite[Appendix~A]{KalvinCV} and \cite[Section~2.1]{KalvinASNS}. In particular, the metric potential
has the asymptotics
\begin{align}
 \varphi(t)
 &=\beta_0\log|t|
   +\Psi(\beta_0,\beta_1,\beta_\infty)+o(1),
 &&t\to0,\notag\\
 \varphi(t)
 &=\beta_1\log|t-1|
   +\Psi(\beta_1,\beta_0,\beta_\infty)+o(1),
 &&t\to1,\label{eq:three-cone-regular-parts}\\
 \varphi(t)
 &=-(\beta_\infty+2)\log|t|
   +\Psi(\beta_\infty,\beta_1,\beta_0)+o(1),
 &&t\to\infty.\notag
\end{align}
The logarithmic terms encode the cone angles, and the regular parts are given by
\be\label{eq:Psi-definition}
\ba
\Psi(\beta_0,\beta_1,\beta_\infty)
={}&\log\frac{2\Gamma(-\beta_0)}{\Gamma(1+\beta_0)}+ \frac12\log
\frac{
 \Gamma\left(2+{|\boldsymbol\beta|}/{2}\right)
 \Gamma\left(\beta_0-{|\boldsymbol\beta|}/{2}\right)
}{
 \pi
 \Gamma\left(-\frac{|\boldsymbol\beta|}{2}\right)
 \Gamma\left(1+{|\boldsymbol\beta|}/{2}-\beta_0\right)}
\\
&+\frac12\log
\frac{
 \Gamma\left(1+{|\boldsymbol\beta|}/{2}-\beta_1\right)
 \Gamma\left(1+{|\boldsymbol\beta|}/{2}-\beta_\infty\right)
}{
 \Gamma\left(\beta_1-{|\boldsymbol\beta|}/{2}\right)
 \Gamma\left(\beta_\infty-{|\boldsymbol\beta|}/{2}\right)
}.
\ea
\ee
The $\log2$ discrepancy from the function $\Phi$ in \cite[Proposition~A.2]{KalvinCV} is due to a
different normalization of the cone-point coordinates; see \cite[Section~2.1]{KalvinASNS}.

For the three-cone sphere, we take the standard round metric on $\mathbb P^1$ as the smooth
reference metric $m_0$ in \eqref{eq:conical-anomaly}. In \cite{KalvinCV}, the global integral and
all local terms were evaluated explicitly, and the determinant of the Friedrichs Laplacian of
$m_{\boldsymbol\beta}$ was found in closed form; see \cite[Corollary~1.3]{KalvinCV}. Thus the
three-cone sphere is the basic nontrivial constant-curvature singular model for which both the
metric and its determinant are available in closed form.

\section{The Bolza surface}

\subsection{Geometry of the quotient orbifolds}\label{SubB_1}

Let $B$ be the Bolza surface equipped with its smooth metric of Gaussian curvature $-1$. Its
area is
$$
 \Area(B)=4\pi.
$$
Let $h$ be the hyperelliptic involution and let $\sigma$ be a nonhyperelliptic involution. Since
$h$ is central,
$$
 V=\{1,h,\sigma,h\sigma\}\cong C_2\times C_2.
$$
Put
$$
 T_1=B/\langle\sigma\rangle,\qquad
 T_2=B/\langle h\sigma\rangle,\qquad
 S_h=B/\langle h\rangle,\qquad
 S_V=B/V.
$$

The hyperelliptic involution has six fixed points. Each of $\sigma$ and $h\sigma$ has two fixed
points: by Riemann--Hurwitz, every involution on a genus-two curve other than the hyperelliptic
involution has a genus-one quotient and exactly two fixed points. The fixed-point sets of the three
nontrivial elements of $V$ are disjoint, since a finite stabilizer of an orientation-preserving
action at a point is cyclic.

It follows that
$$
 T_1,\ T_2:\quad
 \text{hyperbolic tori, each with two cone points of angle }\pi,
$$
and
$$
 S_h:\quad
 \text{hyperbolic sphere with six cone points of angle }\pi.
$$
For the genus $g(S_V)$, the Riemann--Hurwitz formula for the $V$-action gives
$$
 2
 =4\bigl(2g(S_V)-2\bigr)+(6+2+2),
$$
so $g(S_V)=0$. Since the fixed-point sets above are disjoint, each of the ten fixed points has
stabilizer of order two and hence belongs to a $V$-orbit of two points. They therefore give five
cone points on $S_V$: three from the fixed points of $h$, and one from the fixed points of each
of $\sigma$ and $h\sigma$. All five angles are $\pi$. Thus
$$
 S_V:\quad
 \text{hyperbolic sphere with five cone points of angle }\pi.
$$
The areas of the quotient metrics are
\ben
 \Area(T_1)=\Area(T_2)=\Area(S_h)=2\pi,
 \qquad
 \Area(S_V)=\pi.
\een
The two nonhyperelliptic involutions $\sigma$ and $h\sigma$ are conjugate in the conformal
automorphism group of the Bolza curve. Thus $T_1$ and $T_2$ are isometric.

\subsection{Reduction to quotient determinants}

We apply Lemma~\ref{lem:artin} to the action of the Klein four-group $V$ on the Bolza surface
$B$ and obtain the following determinant identity.

\begin{proposition}\label{prop:bolza-determinant}
The determinant $\det\Delta_B$ of the smooth hyperbolic Bolza surface $B$ satisfies
\be\label{eq:bolza-determinant-symmetric}
 \det\Delta_B
 =
 \frac{
  \bigl(\det\Delta_{T_B}\bigr)^2
  \det\Delta_{S_h}
 }{
 \bigl(\det\Delta_{S_V}\bigr)^2
 },
\ee
where $T_B$ denotes either of the isometric singular hyperbolic tori $T_1$ and $T_2$. The tori
$T_1,T_2$ and the spheres $S_h,S_V$ are those introduced in Subsection~\ref{SubB_1}.
\end{proposition}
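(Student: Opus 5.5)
We apply Lemma~\ref{lem:artin} with $G=V=\{1,h,\sigma,h\sigma\}\cong C_2\times C_2$ acting isometrically on $X=B$. The one thing to check is the classical virtual permutation-representation relation for a Klein four-group,
\be\label{eq:v4-relation}
 \Ind_{\{1\}}^V\mathbf 1
 -\Ind_{\langle\sigma\rangle}^V\mathbf 1
 -\Ind_{\langle h\sigma\rangle}^V\mathbf 1
 -\Ind_{\langle h\rangle}^V\mathbf 1
 +2\,\Ind_V^V\mathbf 1=0 .
\ee
To verify it, compare characters on each element of $V$. The character of $\Ind_H^V\mathbf 1$ at $g$ counts the cosets of $H$ fixed by $g$. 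Since $V$ is abelian, this equals $[V:H]$ if $g\in H$ and $0$ otherwise.

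At $g=1$ the left-hand side gives $4-2-2-2+2=0$. At $g=h$ only the subgroups $\langle h\rangle$ and $V$ contain $g$, giving $-2+2=0$. The elements $g=\sigma$ and $g=h\sigma$ are handled the same way. Equivalently, one can decompose into the four characters $\mathbf 1,\chi_h,\chi_\sigma,\chi_{h\sigma}$ of $V$. Here $\Ind_{\{1\}}^V\mathbf 1$ is the sum of all four, each $\Ind_{\langle x\rangle}^V\mathbf 1$ is $\mathbf 1$ plus the one nontrivial character trivial on $x$, and $\Ind_V^V\mathbf 1=\mathbf 1$. Summing with the coefficients above gives zero.

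Lemma~\ref{lem:artin} then yields
$$
 \det\Delta_B\,(\det\Delta_{T_1})^{-1}(\det\Delta_{T_2})^{-1}(\det\Delta_{S_h})^{-1}(\det\Delta_{S_V})^{2}=1,
$$
using $X/\{1\}=B$, $B/\langle\sigma\rangle=T_1$, $B/\langle h\sigma\rangle=T_2$, $B/\langle h\rangle=S_h$ and $B/V=S_V$.

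It remains to set $\det\Delta_{T_1}=\det\Delta_{T_2}=\det\Delta_{T_B}$. This uses Subsection~\ref{SubB_1}: $\sigma$ and $h\sigma$ are conjugate by an automorphism $\gamma$ of $B$, and every conformal automorphism of the hyperbolic surface is an isometry. So $\gamma$ descends to an isometry of orbifolds $T_1\to T_2$. That isometry intertwines the Friedrichs Laplacians, so the two determinants coincide. Substituting gives \eqref{eq:bolza-determinant-symmetric}.

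There is no real obstacle here. The only point needing care is the bookkeeping of coefficients in \eqref{eq:v4-relation}, in particular the coefficient $2$ of the trivial subgroup-$V$ term, which produces the square of $\det\Delta_{S_V}$. Checking the character values on all four group elements settles it.
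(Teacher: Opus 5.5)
Your proposal is correct and follows essentially the same route as the paper. Your relation is the paper's \eqref{eq:v4-relation}, $\sum_j\Ind_{H_j}^V\mathbf 1=\Reg_V+2\mathbf 1_V$, rewritten using $\Reg_V=\Ind_{\{1\}}^V\mathbf 1$ and $\mathbf 1_V=\Ind_V^V\mathbf 1$; you verify it by the same character check, apply Lemma~\ref{lem:artin}, and then identify $\det\Delta_{T_1}=\det\Delta_{T_2}$ through the conjugacy of $\sigma$ and $h\sigma$.
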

\begin{proof}
Let $H_1,H_2,H_3$ be the three subgroups of order two in $V$. The following identity of
$V$-representations is elementary:
\be\label{eq:v4-relation}
 \Ind_{H_1}^{V}\mathbf 1+
 \Ind_{H_2}^{V}\mathbf 1+
 \Ind_{H_3}^{V}\mathbf 1
 =
 \Reg_V+2\mathbf 1_V.
\ee
Indeed, both sides have character six at the identity. At a nontrivial element $v\in V$, exactly
one of the three subgroups $H_j$ contains $v$; hence the character on both sides equals two.

Applying Lemma~\ref{lem:artin} to \eqref{eq:v4-relation} gives
$$
 \zeta_{T_1}(s)+\zeta_{T_2}(s)+\zeta_{S_h}(s)
 =
 \zeta_B(s)+2\zeta_{S_V}(s).
$$
Differentiating at $s=0$, we obtain
\be\label{eq:bolza-determinant}
 \det\Delta_B
 =
 \frac{
  \det\Delta_{T_1}\,
  \det\Delta_{T_2}\,
  \det\Delta_{S_h}
 }{
  \bigl(\det\Delta_{S_V}\bigr)^2
 }.
\ee
Denote either of the isometric tori $T_1$ and $T_2$ by $T_B$. Then
\eqref{eq:bolza-determinant} becomes \eqref{eq:bolza-determinant-symmetric}.
\end{proof}

We now evaluate the three quotient determinants in \eqref{eq:bolza-determinant-symmetric}. For the
Bolza surface, this $V$-reduction is particularly convenient. Its two torus quotients are
isometric CM tori of discriminant $-8$, and the three distinct quotient metrics arise as explicit
Belyi pullbacks of the metric on the hyperbolic $(2,3,8)$ three-cone sphere.

\subsection{The torus quotient}

To evaluate the torus quotient, we first rewrite the singular anomaly formula in terms of two
functionals adapted to a flat reference metric. Let $T$ be a complex torus. A nonzero holomorphic
differential $\omega$ determines the smooth flat metric $m_0=|\omega|^2$. Any singular metric in
the same conformal class can be written as
$$
 m=e^{2\sigma}m_0.
$$
Suppose that $m$ represents the divisor
$$
 \boldsymbol\beta=\sum_{j=1}^n\beta_j\cdot p_j,
 \qquad \beta_j>-1.
$$
In the local flat coordinate $z_j$, normalized by $dz_j=\omega$ and centered at $p_j$, write
$$
 \sigma(z_j)=\beta_j\log|z_j|+\sigma_j+o(1).
$$
Thus $\sigma_j$ is the regular part of the conformal factor $\sigma$ at the cone point $p_j$.
The singular Euler characteristic associated with the divisor is
$$
 \chi_{\boldsymbol\beta}(T)
 =\chi(T)+|\boldsymbol\beta|
 =|\boldsymbol\beta|.
$$
If $K_m$ denotes the regularized Gaussian curvature of $m$, with the point masses at the cone
points omitted, the Gauss--Bonnet formula reads $\int_TK_m\,dA=2\pi\chi_{\boldsymbol\beta}(T)$.

For the flat reference metric $m_0=|\omega|^2$, the Liouville action entering the singular anomaly
formula is
\be\label{eq:torus-liouville-action}
 \mathcal S_T[\sigma]
 =
 \int_TK_m\sigma\,dA
 +2\pi\sum_{j=1}^n\beta_j\sigma_j.
\ee
The remaining local contribution is conveniently collected in the $\mathcal H$-functional:
\be\label{eq:torus-H-functional}
 \log\mathcal H_T[\sigma]
 =
 2\sum_{j=1}^n
 \left(
 \beta_j+1-\frac1{\beta_j+1}
 \right)\sigma_j.
\ee
With these definitions, the anomaly formula \eqref{eq:conical-anomaly} can equivalently be written as
\be\label{eq:torus-action-anomaly}
 \log
 \frac{\det\Delta_m/\Area(T,m)}
 {\det\Delta_{|\omega|^2}/\Area(T,|\omega|^2)}
 =
 -\frac1{12\pi}
 \left(
 \mathcal S_T[\sigma]-\pi\log\mathcal H_T[\sigma]
 \right)
 -\sum_{j=1}^nC(\beta_j).
\ee

Combinations of a Liouville action $\mathcal S$ with local uniformization terms collected in
$\mathcal H$ also occur in the study of moduli and deformation spaces; see
\cite{TakhtajanZograf,TakhtajanTeo,ParkTakhtajanTeo}. At the orbifold values $\beta_j=1/m_j-1$,
such combinations enter the local-index formulas for hyperbolic orbifold metrics in
\cite{TakhtajanZografOrbifold,TakhtajanZografOrbifoldCorrection}. Here, by contrast,
$\mathcal S_T$ and $\mathcal H_T$ are obtained directly from the singular anomaly formula, which
applies to singular metrics of arbitrary curvature and arbitrary cone orders $\beta_j>-1$. Thus
the hyperbolic orbifold formulas are a discrete constant-curvature specialization of the singular
anomaly formalism; they are not used as a basis for the determinant calculation below. Related
singular Liouville data occur in \cite{CanWiegmann}.

The equality~\eqref{eq:torus-action-anomaly} separates the determinant into the smooth reference term,
the universal local cone terms, the local functional $\mathcal H_T$, and a single global quantity,
the Liouville action $\mathcal S_T$. The $\mathcal H_T$-functional is read off directly from the
regular parts of the metric potential, as carried out in the proof of
Proposition~\ref{prop:bolza-torus-determinant}. For the quotient geometries below, a Belyi map
transfers $\mathcal S_T$ to a three-cone sphere, where it is evaluated explicitly. This is what
makes a closed determinant calculation possible.

We now specialize this construction to the Bolza quotient. The only torus determinant in
\eqref{eq:bolza-determinant-symmetric} is $\det\Delta_{T_B}$. A convenient model of its underlying
elliptic curve is
\ben
 T_B:\qquad Y^2=F(X):=X^3-4X^2+2X.
\een
On this model we take
$$
 \omega_B=\frac{dX}{2Y},
 \qquad
 m_B=e^{2\sigma_B}|\omega_B|^2.
$$
The quotient metric $m_B$ has curvature $-1$ and two cone points of angle $\pi$. The quotient
map
$$
 J:T_B\longrightarrow \mathbb P^1(2,3,8)
$$
is given in the target coordinate $t$ by
\be\label{eq:bolza-belyi-map}
 t=J(X)=
 \frac{A(X)^3}{27F(X)^4},\qquad
 A(X)=3X^4+8X^3-20X^2+16X-4.
\ee

The target sphere $\mathbb P^1(2,3,8)$ represents the divisor
\be\label{eq:bolza-target-divisor}
 \boldsymbol\beta
 =-\frac23\cdot0
 -\frac12\cdot1
 -\frac78\cdot\infty.
\ee
Let $m_{\boldsymbol\beta}=e^{2\varphi}|dt|^2$ stand for the corresponding unit-area metric of
Gaussian curvature $-\pi/12$. On the target sphere $(\mathbb P^1,m_{\boldsymbol\beta})$, we
introduce the Liouville action
\be\label{eq:three-cone-liouville-action}
 \ba
 \mathcal S_{\boldsymbol\beta}[\varphi]
={}&2\pi\bigl(|\boldsymbol\beta|+2\bigr)
 \left(
 \int_{\mathbb P^1}\varphi\,dA_{\boldsymbol\beta}-1
 \right)\\
 &+2\pi\left[
 \beta_0\Psi(\beta_0,\beta_1,\beta_\infty)
 +\beta_1\Psi(\beta_1,\beta_0,\beta_\infty)\right.\\
 &\hspace{46mm}\left.
 +(\beta_\infty+2)\Psi(\beta_\infty,\beta_1,\beta_0)
 \right].
 \ea
\ee
Here the three $\Psi$-terms come from the regular parts in the
asymptotics~\eqref{eq:three-cone-regular-parts} of $\varphi$ at $0$, $1$, and $\infty$,
respectively. The expression \eqref{eq:three-cone-liouville-action} is formally different from the
regularized definitions of the Liouville action used by Zamolodchikov--Zamolodchikov and
Takhtajan--Zograf \cite{ZamolodchikovZamolodchikov,TakhtajanZograf}. Their equivalence is
established in \cite[Section~3]{KalvinCV}.

Since a sphere with three marked points has no moduli, $\mathcal S_{\boldsymbol\beta}[\varphi]$ is
a function only of the cone orders $\beta_0,\beta_1,\beta_\infty$. The
Zamolodchikov--Zamolodchikov variational relations express its derivatives with respect to these
orders through the regular parts of the metric potential. The regular parts are expressed by
$\Psi$ in \eqref{eq:three-cone-regular-parts} and determine the integration constant in the
variational equations. Their complete integration was carried out in \cite[Theorem~1.2]{KalvinCV}.
The passage from the normalization $(-1,0,1)$ used there to arbitrary positions of the marked
points is described in \cite[Remark~3.5]{KalvinCV}. In the present normalization $(0,1,\infty)$,
definition \eqref{eq:three-cone-liouville-action} is the specialization of
\cite[equation~(4.10)]{KalvinASNS}. Thus $\mathcal S_{\boldsymbol\beta}[\varphi]$ is known in
closed explicit form. Let us stress that this evaluation is essential: without it, the anomaly
formula retains an undetermined global term and cannot produce closed explicit formulas for the
determinants.

The next lemma transfers the explicitly known three-cone action to the torus.

\begin{lemma}\label{lem:bolza-liouville-action}
Let $\mathcal S_{T_B}[\sigma_B]$ be the source-torus Liouville action
\eqref{eq:torus-liouville-action} of the metric $m_B$, and let
$\mathcal S_{\boldsymbol\beta}[\varphi]$ be the target-sphere Liouville action
\eqref{eq:three-cone-liouville-action} of the unit-area hyperbolic metric $m_{\boldsymbol\beta}$
representing the divisor \eqref{eq:bolza-target-divisor}. Then
\be\label{eq:bolza-torus-action}
 \mathcal S_{T_B}[\sigma_B]
 =24\mathcal S_{\boldsymbol\beta}[\varphi]
 -\frac{22\pi}{3}\log2
 +\frac{\pi}{4}\log3-2\pi\log\pi-2\pi.
\ee
\end{lemma}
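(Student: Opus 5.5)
The plan is to realize $m_B$ as a rescaled pullback of $m_{\boldsymbol\beta}$ by the Belyi map \eqref{eq:bolza-belyi-map}, and then to compare the two Liouville actions term by term.

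\textbf{Pullback and change of variables.} The metric $m_{\boldsymbol\beta}$ has curvature $-\pi/12$, since $2\pi(|\boldsymbol\beta|+2)=2\pi\cdot(-\tfrac1{24})$. Hence $m_B=\tfrac{\pi}{12}J^*m_{\boldsymbol\beta}$, by uniqueness of the curvature $-1$ metric with the given cone data. Area gives $\deg J=2\pi/(\pi/12)=24$, as a map from the torus, i.e.\ counting the two sheets over the $X$-line. Writing $J'=dt/\omega_B$, a meromorphic function on $T_B$, we obtain
$$
\sigma_B=\varphi\circ J+\log|J'|+\tfrac12\log\tfrac{\pi}{12}.
$$
Since $K_m\equiv-1$ and $dA_m=\tfrac{\pi}{12}J^*dA_{\boldsymbol\beta}$, the first term of $\mathcal S_{T_B}[\sigma_B]$ is $-\int_{T_B}\sigma_B\,dA_m$. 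After the change of variables, the $\varphi\circ J$ piece gives $-2\pi\int\varphi\,dA_{\boldsymbol\beta}$. This matches $24$ times the integral part of \eqref{eq:three-cone-liouville-action}, namely $24\cdot2\pi(-\tfrac1{24})(\int\varphi\,dA_{\boldsymbol\beta}-1)$, and that comparison leaves over exactly the constant $-2\pi$ in \eqref{eq:bolza-torus-action}. The constant piece contributes $-2\pi\cdot\tfrac12\log\tfrac{\pi}{12}$, which is the source of $-\pi\log\pi$ and part of the $\log 2$, $\log3$ terms.

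\textbf{The $\log|J'|$ integral.} For $\int\log|J'|\,dA_m$ I will use the Liouville equation on $T_B$. Away from the cone points, $dA_m=e^{2\sigma_B}|\omega_B|^2=-(-4\partial\bar\partial\sigma_B)\,|\omega_B|^2$ in the flat coordinate, since the curvature is $-1$. Integrating by parts (Green's formula on the torus minus small discs) moves the Laplacian onto $\log|J'|$. That function is harmonic except at the zeros and poles of $J'$, which are precisely the ramification points over $0,1,\infty$. What remains is a finite sum of boundary terms at those points. Each is of the form (order of $J'$)$\times$(regular value of $\sigma_B$), plus logarithmically divergent pieces. The divergent pieces must cancel against the singular behaviour $\beta\log|z|$ of $\sigma_B$ and against the $2\pi\sum\beta_j\sigma_j$ term in \eqref{eq:torus-liouville-action}.

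\textbf{Local bookkeeping.} At each preimage $p$ of $0$, $1$ or $\infty$, I will expand $J$ in the flat coordinate $z$ with $dz=\omega_B$. The expansions are $t=a_pz^{e}(1+O(z))$, or $t-1=a_pz^e(1+\dots)$, or $t=a_pz^{-e}(1+\dots)$. The ramification indices are:
\begin{itemize}
\item $e=3$ over $0$ at the zeros of $A$;
\item $e=8$ over $\infty$ at $X=0$ and the other roots of $F$ (these are torus $2$-torsion points where $Y$ is also ramified);
\item $e=2$ over $1$, except at the two simple preimages, which are the cone points of angle $\pi$ (including the point at infinity of $T_B$, where $J\to1$).
\end{itemize}
From \eqref{eq:three-cone-regular-parts}, the regular part of $\sigma_B$ at $p$ is $\Psi(\cdot)+\log(e|a_p|)+\tfrac12\log\tfrac\pi{12}$, with the appropriate $\Psi$. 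Summing over the fibres, the $\Psi$ terms reassemble, with multiplicity $\sum e=24$ per fibre, into $24$ times the bracket in \eqref{eq:three-cone-liouville-action}. The remaining terms are $\sum\log(e|a_p|)$, and these produce the explicit $\log2$ and $\log3$ constants. The coefficients $a_p$ come from the explicit polynomials $A$ and $F$ and from $\omega_B=dX/2Y$.

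\textbf{Main obstacle.} I expect the hardest step to be this last computation. It requires evaluating the products $\prod_p|a_p|$ over each fibre without locating the individual roots. I would do this by expressing the products through resultants and discriminants of $A$, $F$ and $A^3-27F^4$, using the factorization $A^3-27F^4=(\text{const})\,G(X)^2\cdot(\text{linear factor})$ forced by ramification over $1$. A parallel difficulty is checking that all divergent $\log\varepsilon$ terms cancel, since the coefficients must balance exactly via Riemann--Hurwitz.
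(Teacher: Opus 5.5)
Your setup matches the paper's: the rescaled pullback $m_B=\frac{\pi}{12}J^*m_{\boldsymbol\beta}$, and the $\varphi\circ J$ term giving $24\mathcal S_{\boldsymbol\beta}[\varphi]$ up to $-2\pi$ and $\Psi$-terms. Your route for $\int_{T_B}\log|J'|\,dA_B$, Green's identity on the source torus, is legitimate but differs from the paper's. The paper pushes $\log|f_B|$ forward through the norm $N_J(f_B)(t)=\prod_{x\in J^{-1}(t)}f_B(x)=c_Bt^{16}(t-1)^{11}$. It then uses Green's identity only on the target sphere, for the universal integrals \eqref{eq:bolza-logarithmic-integrals}.

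As written, however, your local bookkeeping would give wrong constants, for three reasons.

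\emph{The regular-value formula is wrong.} Take $t-t_0=a_pz^{e}(1+O(z))$ with $t_0\in\{0,1\}$ of target order $\beta$. Combining $\beta\log|t-t_0|$ with $\log|J'|=\log(e|a_p|)+(e-1)\log|z|$ gives the regular value $\Psi+\log e+(\beta+1)\log|a_p|+\frac12\log\frac{\pi}{12}$, not $\Psi+\log(e|a_p|)+\frac12\log\frac{\pi}{12}$. At poles $t=a_pz^{-e}$ the $\log|a_p|$-term is $-(\beta_\infty+1)\log|a_p|$. So the coefficient of $\log|a_p|$ is $\pm1/e$ at the smooth preimages and $\frac12$ at $p_\pm$; the latter is the $\frac12\log(48\sqrt2)$ in \eqref{eq:bolza-source-regular-part}.

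\emph{The boundary terms are misplaced.} There are no divergent pieces. At the ramification points $\sigma_B$ is smooth, and the boundary terms converge to $2\pi\operatorname{ord}_p(J')\,\sigma_B(p)$. The singularity of $\sigma_B$ is at $p_\pm$, where $J'$ is regular and nonzero. There the flux of $-\frac12\log|z|$ gives the finite terms $\pi\log|J'(p_\pm)|$, which your list omits. The correct identity is
\[
 \int_{T_B}\log|J'|\,dA_B
 =2\pi\sum_{p}\operatorname{ord}_p(J')\,\sigma_B(p)
 +\pi\log\bigl|J'(p_+)J'(p_-)\bigr|,
\]
with $p$ running over the $22$ ramification points.

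\emph{The cone points need care.} The point at infinity of $Y^2=F(X)$ is one of the eleven double points over $1$, with $J-1\sim24z^2$ there. The cone points are $p_\pm=(\frac12,\pm\frac1{2\sqrt2})$, coming from the factor $2X-1$ in \eqref{eq:bolza-belyi-identity}.

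With these corrections your route closes up. Your claim about the $\Psi$-terms is right once the local terms are included. Write $\Psi_0,\Psi_1,\Psi_\infty$ for the three regular parts in \eqref{eq:three-cone-regular-parts}. Then $-2\pi(16\Psi_0+11\Psi_1-27\Psi_\infty)$ from $-\int\log|J'|\,dA_B$, plus $-2\pi\Psi_1$ from $-\pi(\sigma_{B,+}+\sigma_{B,-})$, equals $24\cdot2\pi[\beta_0\Psi_0+\beta_1\Psi_1+(\beta_\infty+2)\Psi_\infty]$.

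Your ``main obstacle'' is exactly what the paper's norm removes. The three fibre products are not independent: expanding $|N_J(J')|=|c_B||t|^{16}|t-1|^{11}$ near $t=0,1,\infty$ gives
\[
 \prod_{J(p)=0}27|a_p|
 =|J'(p_+)J'(p_-)|\prod_{J(p)=1,\ e_p=2}4|a_p|
 =\prod_{J(p)=\infty}8^8|a_p|^{-1}
 =|c_B|.
\]
Hence all $\log|a_p|$-terms, together with the flux terms, collapse to $2\pi(\frac23+\frac12-\frac98)\log|c_B|=\frac{\pi}{12}\log|c_B|$. Only the fibre over $0$ is needed: $\operatorname{Res}(A,A')$ and $\operatorname{Res}(A,F)$ give $|c_B|=2^{82}3^9$. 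No resultant involving $Q$ or the discriminant of $A^3-27F^4$ is required. Together with $|J'(p_\pm)|=48\sqrt2$ this yields
\[
 \mathcal S_{T_B}[\sigma_B]
 =24\mathcal S_{\boldsymbol\beta}[\varphi]-2\pi
 -\frac{\pi}{12}\log(2^{82}3^9)
 -2\pi\log\frac{\pi}{12}
 -\pi\log(48\sqrt2),
\]
which is \eqref{eq:bolza-torus-action}.
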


\begin{proof}
Let $p_\pm$ denote the two cone points of $m_B$. In the local flat coordinates $z_\pm$,
normalized by $dz_\pm=\omega_B$ and centered at $p_\pm$, write
$$
 \sigma_B(z_\pm)
 =-\frac12\log|z_\pm|+\sigma_{B,\pm}+o(1).
$$
The source divisor is
$$
 -\frac12\cdot p_+-\frac12\cdot p_-.
$$
Since $m_B$ has Gaussian curvature $-1$ and both source cone orders are $-1/2$, the Liouville
action \eqref{eq:torus-liouville-action} takes the form
\be\label{eq:bolza-source-action}
 \mathcal S_{T_B}[\sigma_B]
 =-\int_{T_B}\sigma_B\,dA_B
 -\pi(\sigma_{B,+}+\sigma_{B,-}).
\ee
We compute the global integral and the two regular parts in \eqref{eq:bolza-source-action} from the
pullback representation
\be\label{eq:bolza-pullback-identities}
 \ba
 m_B&=\frac{\pi}{12}J^*m_{\boldsymbol\beta}
 =e^{2\sigma_B}|\omega_B|^2,\\
 \sigma_B&=\varphi\circ J+
 \log\left|\frac{dJ}{\omega_B}\right|
 +\frac12\log\frac{\pi}{12}.
 \ea
\ee

The map $J$ in \eqref{eq:bolza-belyi-map} has degree $24$ and passport
$[\,3^8;\,1^2 2^{11};\,8^3\,]$. For example, the ramification over $1$ follows from
\be\label{eq:bolza-belyi-identity}
 A(X)^3-27F(X)^4
 =(2X-1)
 \bigl(8-40X+72X^2-56X^3+34X^4-18X^5\bigr)^2.
\ee
The two unramified points over $1$ are
\be\label{eq:bolza-conical-points}
 p_\pm=\left(\frac12,\pm\frac1{2\sqrt2}\right).
\ee
At every ramified point over $0$, $1$, or $\infty$, the local degree of $J$ cancels the
corresponding cone order. The only cone points of the pullback metric are the two unramified points
$p_\pm$ over $1$, both of angle $\pi$.

Since $|\boldsymbol\beta|+2=-1/{24}$, solving \eqref{eq:three-cone-liouville-action} for the
target integral gives
\be\label{eq:bolza-target-integral}
 \ba
 \int_{\mathbb P^1}\varphi\,dA_{\boldsymbol\beta}
 ={}&1-\frac{12}{\pi}\mathcal S_{\boldsymbol\beta}[\varphi]\\
 &-16\Psi\left(-\frac23,-\frac12,-\frac78\right)
 -12\Psi\left(-\frac12,-\frac23,-\frac78\right)\\
 &+27\Psi\left(-\frac78,-\frac12,-\frac23\right).
 \ea
\ee
Thus the target integral is explicit. Recall that $\mathcal S_{\boldsymbol\beta}[\varphi]$ was
found in closed form in \cite{KalvinCV}, and $\Psi$ is defined by \eqref{eq:Psi-definition}.

Set
$$
 f_B=\frac{dJ}{\omega_B}.
$$
Since the holomorphic differential $\omega_B$ has no zeros on $T_B$, $f_B$ is a meromorphic
function. In a local flat coordinate $z$, chosen so that $dz=\omega_B$, one has
$$
 f_B=\frac{dJ}{dz}=2YJ'(X).
$$
At the two points in \eqref{eq:bolza-conical-points},
\be\label{eq:bolza-local-derivative}
 |f_B(p_\pm)|=48\sqrt2.
\ee

We next introduce the norm used to transfer the logarithmic integral from $T_B$ to the target
sphere. For a regular value $t$ of $J$, the fibre $J^{-1}(t)$ consists of $24$ distinct
points. The norm of $f_B$ along $J$, denoted by $N_J(f_B)$, is defined by
$$
 N_J(f_B)(t)=\prod_{x\in J^{-1}(t)}f_B(x).
$$
Analytic continuation around a branch value only permutes the $24$ factors. Hence their product is
a single-valued meromorphic function of $t$, extending across the branch values.

The passport of $J$ determines the divisor of $N_J(f_B)$. At each of the eight points over
$0$, the map is locally cubic. Thus, in a local coordinate $z$,
$$
 J(z)=\lambda z^3+O(z^4),
 \qquad
 f_B(z)=3\lambda z^2+O(z^3),
$$
and $f_B$ has a zero of order $2$. The norm therefore has a zero of order $8\cdot2=16$ at
$t=0$.

Over $1$, there are eleven points at which $J$ is locally quadratic and two unramified points.
Each quadratic point contributes a simple zero of $f_B$, whereas the unramified points make no
contribution. Hence the norm has a zero of order $11$ at $t=1$.

Finally, at each of the three points over $\infty$, the map $J$ has a pole of order $8$.
Locally,
$$
 J(z)=\lambda z^{-8}+O(z^{-7}),
 \qquad
 f_B(z)=-8\lambda z^{-9}+O(z^{-8}),
$$
so $f_B$ has a pole of order $9$. The norm therefore has a pole of order $3\cdot9=27$ at
infinity. Consequently,
$$
 N_J(f_B)(t)=c_Bt^{16}(t-1)^{11}
$$
for a nonzero constant $c_B$.

To determine $|c_B|$, let $a$ be a zero of $A$. At either point $(a,Y(a))$ over $X=a$, in
the flat coordinate $z$ centered at that point,
$$
 J(z)=
 \lambda_a z^3+O(z^4),
 \qquad
 \lambda_a=\frac{8A'(a)^3}{27Y(a)^5}.
$$
The product of $f_B=dJ/dz$ over the three local inverse branches is $27\lambda_a t^2+o(t^2)$.
Pairing the two points with $Y(a)$ and $-Y(a)$, and then multiplying over the four zeros of
$A$, gives
$$
 |c_B|
 =
 2^{24}
 \frac{
 \left|\prod_{A(a)=0}A'(a)\right|^6
 }{
 \left|\prod_{A(a)=0}F(a)\right|^5
 }.
$$
Since $A$ has leading coefficient $3$,
$$
 \prod_{A(a)=0}A'(a)
 =\frac{\operatorname{Res}(A,A')}{3^3},
 \qquad
 \prod_{A(a)=0}F(a)
 =\frac{\operatorname{Res}(A,F)}{3^3}.
$$
Using
$$
 \operatorname{Res}(A,A')=-2^{18}3^2,
 \qquad
 \operatorname{Res}(A,F)=-2^{10},
$$
we obtain
$$
 |c_B|=2^{82}3^9.
$$
Therefore
\be\label{eq:bolza-norm}
 |N_J(f_B)(t)|
 =2^{82}3^9\,|t|^{16}|t-1|^{11}.
\ee

Multiplying the Liouville equation \eqref{eq:three-cone-liouville} for $m_{\boldsymbol\beta}$ by
$\log|t-\xi|$, $\xi\in\{0,1\}$, and applying Green's identity on the sphere with small disks
about $\xi$ and $\infty$ removed expresses the integral in terms of the corresponding regular
parts of $\varphi$. Since the curvature is $-\pi/12$, this implies 
\be\label{eq:bolza-logarithmic-integrals}
 \begin{split}
 \int_{\mathbb P^1}\log|t|\,dA_{\boldsymbol\beta}
 &=24\left[
 \Psi\left(-\frac23,-\frac12,-\frac78\right)
 -\Psi\left(-\frac78,-\frac12,-\frac23\right)
 \right],\\
 \int_{\mathbb P^1}\log|t-1|\,dA_{\boldsymbol\beta}
 &=24\left[
 \Psi\left(-\frac12,-\frac23,-\frac78\right)
 -\Psi\left(-\frac78,-\frac12,-\frac23\right)
 \right].
 \end{split}
\ee
The first identity in \eqref{eq:bolza-pullback-identities} gives
$$
 dA_B=\frac{\pi}{12}J^*dA_{\boldsymbol\beta}.
$$
Since $J$ has degree $24$, the two nonconstant terms in the second identity in
\eqref{eq:bolza-pullback-identities} satisfy
$$
 \ba
 \int_{T_B}\varphi\circ J\,dA_B
 &=\frac{\pi}{12}\,24
 \int_{\mathbb P^1}\varphi\,dA_{\boldsymbol\beta},\\
 \int_{T_B}\log|f_B|\,dA_B
 &=\frac{\pi}{12}
 \int_{\mathbb P^1}\log|N_J(f_B)|\,dA_{\boldsymbol\beta}.
 \ea
$$
The second identity is precisely the reason for introducing the norm: the logarithm of the product
over a fibre is the sum of the logarithms. The constant term in the second identity in
\eqref{eq:bolza-pullback-identities} contributes $\pi\log(\pi/12)$, since $\Area(T_B,m_B)=2\pi$.
Integrating the second identity in \eqref{eq:bolza-pullback-identities} and then substituting
\eqref{eq:bolza-norm} and \eqref{eq:bolza-logarithmic-integrals} gives
\be\label{eq:bolza-global-integral}
 \begin{split}
 \int_{T_B}\sigma_B\,dA_B
={}&\frac \pi {12}\left(
 24\int_{\mathbb P^1}\varphi\,dA_{\boldsymbol\beta}
 +\log(2^{82}3^9)+12\log\frac \pi {12}\right)\\
 &+2\pi\left\{
 16\left[
 \Psi\left(-\frac23,-\frac12,-\frac78\right)
 -\Psi\left(-\frac78,-\frac12,-\frac23\right)
 \right]\right.\\
 &\hspace{21mm}\left.
 +11\left[
 \Psi\left(-\frac12,-\frac23,-\frac78\right)
 -\Psi\left(-\frac78,-\frac12,-\frac23\right)
 \right]\right\}.
 \end{split}
\ee
Thus the global integral on the torus is expressed entirely through the already known three-cone
data.

By \eqref{eq:bolza-local-derivative}, the two regular parts of the potential of $m_B$ in the flat
coordinates $z_\pm$ coincide and are given by
\be\label{eq:bolza-source-regular-part}
 \sigma_{B,+}=\sigma_{B,-}
 =\Psi\left(-\frac12,-\frac23,-\frac78\right)
 +\frac12\log\frac \pi {12}
 +\frac12\log(48\sqrt2).
\ee
By \eqref{eq:bolza-target-integral}, the remaining sphere integral in
\eqref{eq:bolza-global-integral} is expressed through the target-sphere Liouville action. Hence
\eqref{eq:bolza-global-integral} gives the global term in the source-torus action, whereas
\eqref{eq:bolza-source-regular-part} gives its two local terms. Substitution into
\eqref{eq:bolza-source-action} and simplification yield \eqref{eq:bolza-torus-action}.
\end{proof}

\begin{proposition}\label{prop:bolza-torus-determinant}
For the hyperbolic torus $(T_B,m_B)$, whose two cone angles are $\pi$, the determinant of the
Friedrichs Laplacian is
\be\label{eq:bolza-torus-determinant}
 \begin{split}
 \log\det\Delta_{T_B}
 ={}&\log\frac{[\Gamma(1/8)\Gamma(3/8)]^2}
 {2^{289/72}3^{1/48}\pi^{25/12}}
 +\frac16-\frac2\pi\mathcal S_{\boldsymbol\beta}[\varphi]\\
 &-\frac12\Psi\left(-\frac12,-\frac23,-\frac78\right)
 -2C\left(-\frac12\right).
 \end{split}
\ee
Here $\mathcal S_{\boldsymbol\beta}[\varphi]$ denotes the Liouville action of the unit-area
hyperbolic metric $m_{\boldsymbol\beta}$ representing the divisor \eqref{eq:bolza-target-divisor}.
It was obtained in a  closed explicit  form in \cite[Theorem~1.2 and Remark~3.5]{KalvinCV}. The
functions $\Psi$ and $C$ are defined in \eqref{eq:Psi-definition} and
\eqref{eq:conical-constant}, respectively.
\end{proposition}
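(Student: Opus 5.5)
We apply the torus form \eqref{eq:torus-action-anomaly} of the singular anomaly formula with $m=m_B$ and $m_0=|\omega_B|^2$. Rearranged, it reads
$$
 \log\det\Delta_{T_B}=\log\det\Delta_{|\omega_B|^2}+\log\frac{2\pi}{\Area(T_B,|\omega_B|^2)}-\frac1{12\pi}\mathcal S_{T_B}[\sigma_B]+\frac1{12}\log\mathcal H_{T_B}[\sigma_B]-2C\left(-\frac12\right).
$$
We then evaluate the four ingredients in turn.

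\textbf{The action.} For $\mathcal S_{T_B}$ we substitute Lemma~\ref{lem:bolza-liouville-action}. The factor $-\tfrac1{12\pi}\cdot24\,\mathcal S_{\boldsymbol\beta}[\varphi]$ produces the term $-\tfrac2\pi\mathcal S_{\boldsymbol\beta}[\varphi]$. The constant $-2\pi$ in \eqref{eq:bolza-torus-action} produces $+\tfrac16$. The remaining constants contribute $\tfrac{11}{18}\log2-\tfrac1{48}\log3+\tfrac16\log\pi$.

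\textbf{The local term.} With $\beta_j=-1/2$ we have $\beta_j+1-1/(\beta_j+1)=-3/2$, so \eqref{eq:torus-H-functional} gives
$$
 \log\mathcal H_{T_B}=-3(\sigma_{B,+}+\sigma_{B,-})=-6\sigma_{B,+}.
$$
Hence $\tfrac1{12}\log\mathcal H_{T_B}=-\tfrac12\sigma_{B,+}$. Inserting \eqref{eq:bolza-source-regular-part}, this yields $-\tfrac12\Psi(-\tfrac12,-\tfrac23,-\tfrac78)$ together with the explicit constant $-\tfrac14\log\tfrac{\pi}{12}-\tfrac14\log(48\sqrt2)$.

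\textbf{The flat reference torus.} We need the periods of $\omega_B=dX/(2Y)$ on $Y^2=X^3-4X^2+2X$. Write the period lattice as $\lambda(\mathbb Z+\tau_B\mathbb Z)$ with $\tau_B=i\sqrt2$. Then $\Area(T_B,|\omega_B|^2)=\sqrt2\,|\lambda|^2$, and the Kronecker limit formula gives
$$
 \det\Delta_{|\omega|^2}=\Area\cdot\operatorname{Im}\tau\,|\eta(\tau)|^4.
$$
So we must compute $|\lambda|$ and $|\eta(i\sqrt2)|$. For $\eta(i\sqrt2)$ we use the Chowla--Selberg formula for discriminant $-8$, whose character $\chi_{-8}$ takes value $+1$ at $1,3$ and $-1$ at $5,7$. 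This gives $|\eta(i\sqrt2)|^4$ as a rational power of $2$ and $\pi$ times $\Gamma(1/8)\Gamma(3/8)/(\Gamma(5/8)\Gamma(7/8))$. Using the reflection formula, this ratio becomes a multiple of $[\Gamma(1/8)\Gamma(3/8)]^2$ by elementary constants.

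The period $\lambda$ is likewise computed by Chowla--Selberg, or directly by reducing $\int dX/(2Y)$ between roots of $F$ (namely $0$ and $2\pm\sqrt2$) to a beta integral. This produces a factor $[\Gamma(1/8)\Gamma(3/8)]^{\pm1}$ times powers of $2$ and $\pi$. The area factor $\log(2\pi/\Area)$ then combines with $\log\det\Delta_{|\omega|^2}$, so that only $\log(2\pi\sqrt2\,|\eta|^4)$ survives. Note that the area, and hence $|\lambda|$, cancels entirely, because the anomaly formula is written with area-normalized determinants. It therefore suffices to know $|\eta(i\sqrt2)|$, which avoids computing $\lambda$ at all.

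\textbf{Assembly.} Collecting all $\log2$, $\log3$ and $\log\pi$ constants should reproduce the exponents $2^{-289/72}3^{-1/48}\pi^{-25/12}$. The main obstacle is the exact Chowla--Selberg evaluation of $|\eta(i\sqrt2)|$, including its power of $2$, together with careful bookkeeping of these constants. As a check, I would verify the $\eta$ value numerically before assembling.
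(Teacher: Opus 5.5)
Your proposal is correct and follows the paper's own proof. The paper likewise inserts Lemma~\ref{lem:bolza-liouville-action} and the regular parts \eqref{eq:bolza-source-regular-part} into the torus anomaly formula. It evaluates only the area-normalized flat determinant $\sqrt2\,|\eta(i\sqrt2)|^4=[\Gamma(1/8)\Gamma(3/8)]^2/(32\pi^3)$, via the Kronecker limit formula, Chowla--Selberg for discriminant $-8$ and Euler reflection, so the period $\lambda$ never enters. Carrying your bookkeeping through gives $\log 2$: $-4+\tfrac{11}{18}-\tfrac58=-\tfrac{289}{72}$, $\log\pi$: $-2+\tfrac16-\tfrac14=-\tfrac{25}{12}$, and $\log 3$: $-\tfrac1{48}$, which are exactly the stated exponents.
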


\begin{proof}
Since both cone orders are equal to $-1/2$, the definition \eqref{eq:torus-H-functional}
specializes to
\ben
 \log\mathcal H_{T_B}[\sigma_B]
 =-3(\sigma_{B,+}+\sigma_{B,-}).
\een
Substituting the two regular parts \eqref{eq:bolza-source-regular-part} gives
\be\label{eq:bolza-torus-H}
 \log\mathcal H_{T_B}[\sigma_B]
 =-6\Psi\left(-\frac12,-\frac23,-\frac78\right)
 -\frac{15}{2}\log2-3\log\pi.
\ee
Setting both cone orders equal to $-1/2$ in the anomaly formula
\eqref{eq:torus-action-anomaly} gives
\be\label{eq:bolza-source-action-anomaly}
 \log\frac{\det\Delta_{T_B}/(2\pi)}
 {\det\Delta_{|\omega_B|^2}/
 \Area(T_B,|\omega_B|^2)}
 =
 -\frac1{12\pi}
 \left(
 \mathcal S_{T_B}[\sigma_B]
 -\pi\log\mathcal H_{T_B}[\sigma_B]
 \right)
 -2C\left(-\frac12\right).
\ee

The remaining reference term in \eqref{eq:bolza-source-action-anomaly} is the normalized determinant
of the smooth flat metric $|\omega_B|^2$. Its period ratio is $\tau_B=i\sqrt2$. The Kronecker
limit formula \cite{OsgoodPhillipsSarnak} gives the first equality below, while the discriminant
$-8$ case of the Chowla--Selberg formula \cite{ChowlaSelberg,BarqueroSanchezEtAl}, together with
Euler's reflection formula, gives the second:
\be\label{eq:bolza-flat-determinant}
 \frac{\det\Delta_{|\omega_B|^2}}
 {\Area(T_B,|\omega_B|^2)}
 =\sqrt2\,|\eta(i\sqrt2)|^4
 =\frac{\bigl[\Gamma(1/8)\Gamma(3/8)\bigr]^2}
 {32\pi^3}.
\ee

All terms in the anomaly formula \eqref{eq:bolza-source-action-anomaly} are now explicit. The
source-torus Liouville action is evaluated in \eqref{eq:bolza-torus-action}, the local contribution
is \eqref{eq:bolza-torus-H}, and the normalized flat reference determinant is
\eqref{eq:bolza-flat-determinant}. Substitution and simplification yield
\eqref{eq:bolza-torus-determinant}.
\end{proof}

\subsection{The spherical quotients}

The two remaining terms in \eqref{eq:bolza-determinant-symmetric} are determinants of hyperbolic
spheres. We evaluate them by the Belyi-pullback formula of \cite{KalvinASNS}. Once the map to a
three-cone sphere is fixed, this formula expresses the determinant through the target determinant
and the local coefficients of the map.

\begin{lemma}\label{lem:bolza-octahedral-quotient}
Let $S_h$ be the regular octahedral sphere with six cone angles $\pi$, equipped with its metric
of Gaussian curvature $-1$, and let
$$
 \boldsymbol\gamma=
-\frac78\cdot0
-\frac12\cdot1
 -\frac78\cdot\infty.
$$
Then
\be\label{eq:bolza-octahedron-explicit}
 \begin{split}
 \log\det\Delta_{S_h}
={}&4\log\det\Delta_{\boldsymbol\gamma}
 +10\Psi\left(-\frac78,-\frac12,-\frac78\right)\\
&+8C\left(-\frac78\right)-2C\left(-\frac12\right)
-\frac12+12\zeta'_R(-1)
+\frac{47}{12}\log2+\frac{41}{12}\log\pi.
 \end{split}
\ee
Here $\det\Delta_{\boldsymbol\gamma}$ denotes the determinant of the Friedrichs Laplacian for the
unit-area hyperbolic metric on $\mathbb P^1$ representing $\boldsymbol\gamma$. The required
closed explicit formula was found in \cite[Corollary~1.3]{KalvinCV}. The functions $\Psi$ and
$C$ are defined in \eqref{eq:Psi-definition} and \eqref{eq:conical-constant}, respectively.
\end{lemma}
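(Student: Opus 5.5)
We will realize $S_h$ as an explicit cyclic Belyi pullback of the three-cone sphere $\mathbb P^1(8,2,8)$ represented by $\boldsymbol\gamma$, and then apply the Belyi-pullback determinant formula of \cite{KalvinASNS}, which is itself a consequence of \eqref{eq:conical-anomaly}. Place the six cone points of the octahedral sphere at $0,\infty,\pm1,\pm i$ in a coordinate $z$ on $\mathbb P^1$. The rotation group $C_4$ about the $0$--$\infty$ axis acts by isometries, and the quotient map is $t=z^4$.

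Over $t=0$ and $t=\infty$ the map is totally ramified of local degree $4$. There the source cone order $-1/2$ becomes $4\cdot(1/2)-1=1$, i.e. target order $1/8-1=-7/8$. The four points $\pm1,\pm i$ are unramified and lie over $t=1$, so they carry the target order $-1/2$. Hence the pullback of $m_{\boldsymbol\gamma}$ has exactly the six cone points of angle $\pi$. Since $|\boldsymbol\gamma|+2=-1/4$, the unit-area metric $m_{\boldsymbol\gamma}$ has curvature $-\pi/2$. Therefore the metric of curvature $-1$ on $S_h$ is $\frac{\pi}{2}\,(z^4)^*m_{\boldsymbol\gamma}$, of area $4\cdot\frac{\pi}{2}=2\pi$, which agrees with Subsection~\ref{SubB_1}.

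Next we compare both metrics to round reference metrics via \eqref{eq:conical-anomaly}. I would first do this for the unscaled pullback $(z^4)^*m_{\boldsymbol\gamma}$ of area $4$. The global integral $\int K\sigma\,dA$ is transferred to the target exactly as in the proof of Lemma~\ref{lem:bolza-liouville-action}. We write $\sigma=\varphi\circ J+\log|J'|+\text{(round-metric correction)}$ with $J=z^4$. The norm of $J'=4z^3$ along $J$ is $4^4t^3$ up to a unimodular constant, so its logarithmic integral reduces to $\int\log|t|\,dA_{\boldsymbol\gamma}$. By the Green's identity argument used for \eqref{eq:bolza-logarithmic-integrals}, that integral is a difference of $\Psi$-values. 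The $\varphi$ term contributes $4$ times the target integral, and the local regular parts are read off from \eqref{eq:three-cone-regular-parts} composed with $z^4$. The result is the structure $4\log\det\Delta_{\boldsymbol\gamma}$ plus $\Psi$-terms, where the latter are dominated by $\Psi(-\frac78,-\frac12,-\frac78)$ after the $t=0$ and $t=\infty$ contributions are combined by symmetry. The local constants assemble as $4\cdot2\,C(-\frac78)$ from the target subtracted against the source terms. By symmetry and the reflection identities for $\Psi$, the two ramified points should collapse into a single $\Psi$-value with coefficient $10$.

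Finally we rescale by $c=\pi/2$ using \eqref{eq:determinant-scaling} and \eqref{eq:zeta-zero}. For six cone points of order $-1/2$ we have $\chi_{\boldsymbol\beta}=2-3=-1$, and \eqref{eq:zeta-zero} gives $\zeta(0)=-\frac16-\frac1{12}\cdot6\cdot(\frac12-2)-1=-\frac5{12}$. This produces the $\log\pi$ and part of the $\log2$ coefficients. The $12\zeta_R'(-1)$ and the $-\frac12$ should come from the difference between the round-sphere determinant terms in the source and the four copies in the target. The round determinant $\log\det\Delta_{S^2}=\frac12-4\zeta_R'(-1)$ enters once for the source and four times for the target, giving net $-3$ copies, i.e. $-\frac32+12\zeta_R'(-1)$ up to area normalizations that adjust the rational constant. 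I expect the main obstacle to be precisely this constant bookkeeping: the $\log2$ and $\log\pi$ coefficients, and how the $C(-\frac12)$ terms at the two ramified points cancel against the four unramified ones to leave $-2C(-\frac12)$. I would organize it by citing the general formula of \cite{KalvinASNS} for pullbacks by $z\mapsto z^n$ and then specializing to $n=4$.
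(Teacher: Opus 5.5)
Your proposal is correct and is essentially the paper's argument. The octahedral formula of \cite[Example~5.4]{KalvinASNS} that the paper specializes comes from the same degree-$4$ pullback of the $\boldsymbol\gamma$-sphere, and the passport $[4;1^4;4]$ forces that map to be $t=z^4$ up to M\"obius transformations. The paper starts from the area-$4\pi$ normalization and rescales by $c=1/2$ with $\zeta_{S_h}(0)=-5/12$; this is equivalent to your rescaling of the area-$4$ pullback by $c=\pi/2$.

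Once you cite the pullback formula, your heuristic bookkeeping no longer matters, but it is off in several places:
\begin{itemize}
\item With round reference metrics, the term $\int K_0\sigma\,dA_0$ does not transfer through $J$ as it does for the flat torus. The non-explicit integrals cancel only after a Green's identity against the conformal factor relating $J^*m_0$ to the round metric on the source.
\item The $\Psi(-\tfrac12,-\tfrac78,-\tfrac78)$ terms cancel exactly.
\item No reflection identity is needed, since $\beta_0=\beta_\infty$.
\item The extra $+1$ that turns $-\tfrac32$ into $-\tfrac12$ comes from explicit round-metric integrals, not from area normalizations, which only produce logarithms.
\end{itemize}
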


\begin{proof}
Specializing the octahedral formula \cite[Example~5.4, equation~(5.7)]{KalvinASNS} to the present
angles, we obtain the determinant for the metric of area $4\pi$ and Gaussian curvature $-1/2$.
The quotient metric on $S_h$ has area $2\pi$ and Gaussian curvature $-1$; thus it is obtained
by the constant rescaling $m\mapsto cm$, where $c=1/2$. By the rescaling property of determinants~\eqref{eq:determinant-scaling}, this
adds
$$
 -\zeta_{S_h}(0)\log c=-\frac5{12}\log2
$$
to the determinant, where $\zeta_{S_h}(0)=-5/12$ follows from the general formula~\eqref{eq:zeta-zero} for zeta  at zero. This leads to
\eqref{eq:bolza-octahedron-explicit} and completes the proof.
\end{proof}

\begin{lemma}\label{lem:bolza-five-cone-quotient}
Let $S_V$ be the hyperbolic sphere of area $\pi$ with five cone angles $\pi$, and let
$$
 \boldsymbol\beta=
 \left(-\frac23\right)\cdot0
 +\left(-\frac12\right)\cdot1
 +\left(-\frac78\right)\cdot\infty.
$$
Then
\be\label{eq:bolza-five-cone-explicit}
 \begin{split}
 \log\det\Delta_{S_V}
={}&\log12+12\log\det\Delta_{\boldsymbol\beta}
 +\frac{16}{3}\Psi\left(-\frac23,-\frac12,-\frac78\right)
 +\frac52\Psi\left(-\frac12,-\frac23,-\frac78\right)\\
 &+15\Psi\left(-\frac78,-\frac12,-\frac23\right)\\
&+12C\left(-\frac23\right)+7C\left(-\frac12\right)
 +12C\left(-\frac78\right)-\frac{11}{6}+44\zeta'_R(-1)\\
&+\frac{107}{9}\log2-\frac{11}{12}\log3
 +\frac{275}{24}\log\pi.
 \end{split}
\ee
Here $\det\Delta_{\boldsymbol\beta}$ denotes the determinant of the Friedrichs Laplacian for the
unit-area metric $m_{\boldsymbol\beta}$ on $\mathbb P^1(2,3,8)$ representing
$\boldsymbol\beta$. Its closed explicit formula was found in \cite[Corollary~1.3]{KalvinCV}. The
functions $\Psi$ and $C$ are defined in \eqref{eq:Psi-definition} and
\eqref{eq:conical-constant}, respectively.
\end{lemma}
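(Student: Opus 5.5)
We will realise $S_V$ as a Belyi pullback of the three-cone sphere $\mathbb P^1(2,3,8)$ and apply the singular anomaly formula \eqref{eq:conical-anomaly}, exactly as for $S_h$ but now with a degree-$12$ covering. The quotient $S_V\to B/\operatorname{Aut}^+(B)=\mathbb P^1(2,3,8)$ has degree $12$ (Figure~\ref{fig:quotient-lattices}). We fix a coordinate $x$ on $S_V\cong\mathbb P^1$ and write down an explicit rational function $R$ of degree $12$. Over $0$ it must have only triple points (passport $3^4$). Over $\infty$ it must have $8$-fold points. Over $1$ it must have double points plus exactly the unramified points that produce the five cone angles $\pi$.

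A first check is Riemann--Hurwitz: $2\cdot12-2=8+(12-k)+\cdots$. With five cone points one of which may lie over $\infty$, the natural candidate passport is $[3^4;\,1^{3}2^{\ldots};\,8\,4]$ or similar. We will determine it from the orbit structure of the $V$-fixed points. A convenient choice is to obtain $R$ by descending the octahedral function of \cite{KalvinASNS} through $S_h\to S_V$, i.e.\ composing with the degree-two quotient by the involution induced by $\sigma$. This gives $R$ in closed form and makes the local coefficients computable.

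Once $R$ is fixed, we apply the Belyi-pullback determinant formula of \cite{KalvinASNS}, the genus-zero analogue of the computation in Lemma~\ref{lem:bolza-liouville-action}. The pullback metric $\frac{\pi}{12}R^*m_{\boldsymbol\beta}$ has curvature $-1$ and area $12\cdot\pi/12=\pi$, as required. The global integral is handled by degree multiplication: the term $12\log\det\Delta_{\boldsymbol\beta}$ and the $\log 12$ come from the area normalisation, while the $\Psi$-terms come from the logarithmic integrals as in \eqref{eq:bolza-logarithmic-integrals}. The local terms come from the norm $N_R(dR/dx)$ relative to the round reference metric. Its divisor is read off from the passport, and its constant is given by resultants exactly as for $|c_B|$.

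The coefficients of $C(\cdot)$ bookkeep cone points. Each point over $0$, $1$ and $\infty$ of the target contributes $+C(\beta)$ in the pullback formula, and the genuine cone points of $S_V$ contribute $-C(-1/2)$. The stated coefficients $12C(-2/3)$, $7C(-1/2)$ and $12C(-7/8)$ arise after this cancellation, and they constrain the passport choice. The $\zeta_R'(-1)$ and $\log 2$ terms come from the round-sphere reference determinant and from the $C(0)$-type normalisations.

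We then convert to the stated normalisation: if the formula of \cite{KalvinASNS} produces a different curvature or area, we rescale via \eqref{eq:determinant-scaling} with $\zeta_{S_V}(0)$ from \eqref{eq:zeta-zero}. For five cone orders $-1/2$ this gives $\chi_{\boldsymbol\beta}=-1/2$, so $\zeta(0)=-\frac1{12}-\frac1{12}\cdot5\cdot(-\frac32)-1=-\frac{11}{24}$.

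The main obstacle is getting the explicit map $R$ and the constant of the norm $N_R$ right, because all the $\log2$, $\log3$ and $\log\pi$ rational coefficients hinge on it. As a cross-check, we will verify that the whole formula is consistent with the $S_h$ formula: applying the degree-two pullback $S_h\to S_V$ through the same anomaly formula should reproduce Lemma~\ref{lem:bolza-octahedral-quotient}.
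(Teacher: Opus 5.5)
Your skeleton is the paper's: apply the Belyi-pullback formula of \cite{KalvinASNS} to the degree-$12$ quotient map $S_V\to\mathbb P^1(2,3,8)$, then rescale by $c=\pi/12$ using $\zeta_{S_V}(0)=-11/24$. Your value of $\zeta_{S_V}(0)$ is right.

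\textbf{Wrong ramification data.} Every input of that formula is read off from the ramification data, and the data you commit to is wrong. Over $\infty$ the map has no $8$-fold points. The preimages of the order-$8$ orbifold point are the images of the six Weierstrass points. Their stabilizer $C_8\subset\operatorname{Aut}(B)$ meets $V$ in $\langle h\rangle$, so they give three points of local degree $8/2=4$, each of cone order $4\cdot\frac18-1=-\frac12$. Hence three of the five cone points lie over $\infty$. Only two, the images of the fixed points of $\sigma$ and of $h\sigma$, are unramified over $1$. The passport is $[\,3^4;\,1^22^5;\,4^3\,]$.

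Your candidate $[3^4;1^32^{\ldots};8\,4]$ is impossible by parity. Even a Riemann--Hurwitz-admissible variant such as $[3^4;1^42^4;8\,4]$ would be of no use. A five-cone sphere is not determined by its cone angles, so a different Belyi map yields a different sphere, not $S_V$. You need the actual quotient map together with its local coefficients.

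\textbf{How the paper gets the map.} It comes at no cost. The torus map $J=A(X)^3/(27F(X)^4)$ depends only on $X$, so it factors through $T_B\to\mathbb P^1_X\simeq S_V$, the quotient by $Y\mapsto-Y$. The four fixed points of that involution, the roots of $F$ and $X=\infty$, become cone points. The paper then lists $\operatorname{ord}_kJ_{S_V}$ and $c_k$ at all $14$ points of $J_{S_V}^{-1}\{0,1,\infty\}$. These are the roots of $A$, $F$, $Q$ (via \eqref{eq:bolza-belyi-identity}), together with $X=\frac12$ and $X=\infty$, where $c_k=96$ and $c_k=24$. These are exactly the inputs of the pullback formula, and they are where the $\log2$ and $\log3$ coefficients come from. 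Your proposal never produces them. Descending the octahedral function through the edge rotation $\bar\sigma$ of $S_h$ is a legitimate alternative construction, and done correctly it also gives $4^3$ over $\infty$. But it has to be carried out.

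\textbf{The $C$-coefficients.} Your account of these is also wrong. The target enters as $12\log\det\Delta_{\boldsymbol\beta}$, with the degree as coefficient, and brings $12\bigl[C(-\frac23)+C(-\frac12)+C(-\frac78)\bigr]$ from the target's local terms. The five genuine cone points of $S_V$ subtract $5C(-\frac12)$, and preimages that become smooth contribute $C(0)=0$. This gives $12$, $7=12-5$, $12$. One $C(\beta)$ per preimage point would instead give $4C(-\frac23)$ and $3C(-\frac78)$. In neither reading do the coefficients constrain the passport beyond the number of angle-$\pi$ points.

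Similarly, $44\zeta'_R(-1)$ is $(1-12)$ times the $-4\zeta'_R(-1)$ in the round-sphere reference determinant, whose logarithm is $\frac12-4\zeta'_R(-1)$. There is no separate $C(0)$ contribution.
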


\begin{proof}
Since $|\operatorname{Aut}(B)|=48$ and $|V|=4$, the natural map
$$
 S_V=B/V\longrightarrow B/\operatorname{Aut}(B)
 \simeq\mathbb P^1(2,3,8)
$$
has degree $12$. On $S_V\simeq\mathbb P^1_X$, the rational function
$$
 J_{S_V}(X)=\frac{A(X)^3}{27F(X)^4},
 \qquad
 \ba
 F(X)&=X^3-4X^2+2X,\\
 A(X)&=3X^4+8X^3-20X^2+16X-4,
 \ea
$$
has degree $12$ and passport
$$
 [\,3^4;\,1^2 2^5;\,4^3\,].
$$
Following the notation of \cite[Proposition~3.1, equation~(3.1)]{KalvinASNS}, write
$$
 J_{S_V}^{-1}\{0,1,\infty\}=\{x_1,\ldots,x_{14}\}.
$$
Denote the ramification order at $x_k$ by $\operatorname{ord}_kJ_{S_V}$, and let $c_k$ be the
first nonzero coefficient in the corresponding local expansion. Thus the local degree is
$\operatorname{ord}_kJ_{S_V}+1$, and, with $\xi_k=X-x_k$ for $x_k\ne\infty$ and $\xi_k=1/X$
for $x_k=\infty$,
$$
 J_{S_V}(X)-J_{S_V}(x_k)
 =c_k\xi_k^{\operatorname{ord}_kJ_{S_V}+1}
 +O\left(\xi_k^{\operatorname{ord}_kJ_{S_V}+2}\right)
$$
when $J_{S_V}(x_k)\ne\infty$, whereas at a pole
$$
 J_{S_V}(X)
 =c_k\xi_k^{-\operatorname{ord}_kJ_{S_V}-1}
 +O\left(\xi_k^{-\operatorname{ord}_kJ_{S_V}}\right).
$$

Set
$$
 Q(X)=8-40X+72X^2-56X^3+34X^4-18X^5.
$$
If $a$, $b$, and $q$ range over the roots of $A$, $F$, and $Q$, respectively, the
factorization \eqref{eq:bolza-belyi-identity} gives
$$
 \ba
 x_k=a:\quad
 &\operatorname{ord}_kJ_{S_V}=2,
 &c_k&=\frac{A'(a)^3}{27F(a)^4},
 &&A(a)=0,\\
 x_k=b:\quad
 &\operatorname{ord}_kJ_{S_V}=3,
 &c_k&=\frac{A(b)^3}{27F'(b)^4},
 &&F(b)=0,\\
 x_k=q:\quad
 &\operatorname{ord}_kJ_{S_V}=1,
 &c_k&=\frac{(2q-1)Q'(q)^2}{27F(q)^4},
 &&Q(q)=0,
 \ea
$$
together with
$$
 x_k=\frac12:\quad \operatorname{ord}_kJ_{S_V}=0,
 \quad c_k=96,
 \qquad
 x_k=\infty:\quad \operatorname{ord}_kJ_{S_V}=0,
 \quad c_k=24.
$$

The passport shows that $J_{S_V}^*m_{\boldsymbol\beta}$ has area $12$ and precisely five cone
angles $\pi$. The curvature $-1$ quotient metric is therefore
$$
 m_{S_V}=\frac{\pi}{12}J_{S_V}^*m_{\boldsymbol\beta}.
$$
The general formula for zeta at zero~\eqref{eq:zeta-zero} gives
$
 \zeta_{S_V}(0)=-\frac{11}{24}$.
So, by the rescaling property~\eqref{eq:determinant-scaling}, the rescaling factor $c=\pi/12$ contributes $\frac{11}{24}\log(\pi/12)$ into the determinant. Substituting the values of $\operatorname{ord}_kJ_{S_V}$ and
$c_k$ above into the Belyi-pullback formula of \cite{KalvinASNS} gives
\eqref{eq:bolza-five-cone-explicit}.
\end{proof}

\subsection{The determinant of the Bolza surface}

All three quotient determinants have now been evaluated explicitly. We can therefore reconstruct the
determinant of the Bolza surface.

\begin{theorem}\label{thm:bolza-final}
For the smooth metric of Gaussian curvature $-1$ on the Bolza surface, the spectral determinant
has the following closed explicit form:
\be\label{eq:bolza-final-expanded}
 \det\Delta_B=
 \frac{3^{17/24}(1+\sqrt2)^{1/4}}
 {2^{137/36}\pi^5}\,
 \mathcal G_B
 \exp\left\{
 -\frac12+\frac{68}{3}\zeta'_R(-1)+\mathcal Z_B
 \right\}.
\ee
Here $\mathcal G_B$ is the Gamma factor $$ \mathcal G_B= \Gamma\left(\frac14\right)^2
\Gamma\left(\frac18\right)^{7/2} \Gamma\left(\frac38\right)^{9/2} \left[
\frac{\displaystyle\prod_{r\in R_B^+}\Gamma(r/48)} {\displaystyle\prod_{r\in R_B^-}\Gamma(r/48)}
\right]^{3/4} $$ and $\mathcal Z_B$ is the Hurwitz zeta term $$ \ba \mathcal Z_B={}&
\frac{56}{3}\zeta'_H\left(-1,\frac12\right) +8\left[ \zeta'_H\left(-1,\frac13\right)
+\zeta'_H\left(-1,\frac23\right)\right]\\ &-\frac83\left[ \zeta'_H\left(-1,\frac14\right)
+\zeta'_H\left(-1,\frac34\right)\right] +\frac{28}{3}\left[ \zeta'_H\left(-1,\frac18\right)
+\zeta'_H\left(-1,\frac78\right)\right]\\ &-\frac43\left[ \zeta'_H\left(-1,\frac38\right)
+\zeta'_H\left(-1,\frac58\right)\right] -8\sum_{r\in R_B^-\cup R_B^+}\zeta'_H\left(-1,\frac
r{48}\right), \ea $$ where $R_B^-=\{1,17,25,41\}$ and $R_B^+=\{7,23,31,47\}$.
\end{theorem}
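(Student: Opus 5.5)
The proof is a direct substitution into the multiplicative relation of Proposition~\ref{prop:bolza-determinant},
\[
 \log\det\Delta_B=2\log\det\Delta_{T_B}+\log\det\Delta_{S_h}-2\log\det\Delta_{S_V},
\]
using \eqref{eq:bolza-torus-determinant}, \eqref{eq:bolza-octahedron-explicit} and \eqref{eq:bolza-five-cone-explicit} for the three terms on the right. After substitution, the right-hand side contains four kinds of terms. There are two target determinants, $4\log\det\Delta_{\boldsymbol\gamma}$ and $-24\log\det\Delta_{\boldsymbol\beta}$. There is one Liouville action, $-\frac4\pi\mathcal S_{\boldsymbol\beta}[\varphi]$. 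There are $\Psi$-values at the $(2,3,8)$ and $(8,2,8)$ orders. Finally there are the local constants $C(-1/2)$, $C(-2/3)$, $C(-7/8)$, together with elementary constants. I would collect these groups separately. The rational and $\zeta_R'(-1)$ parts can be assembled right away. For instance, the $\zeta_R'(-1)$ coefficient from the $S_h$ and $S_V$ terms is $12-88=-76$; more contributions will come from the $C$'s and from the closed formulas of \cite{KalvinCV}.

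Next I would insert the closed formulas that are already known. For $\det\Delta_{\boldsymbol\gamma}$ and $\det\Delta_{\boldsymbol\beta}$ I use \cite[Corollary~1.3]{KalvinCV}. For $\mathcal S_{\boldsymbol\beta}[\varphi]$ I use \cite[Theorem~1.2, Remark~3.5]{KalvinCV}, in the normalization \eqref{eq:three-cone-liouville-action}. For $\Psi$ I use \eqref{eq:Psi-definition}. For $C(\beta)$ I use \eqref{eq:conical-constant} together with the standard evaluation of $\zeta_B'(0;1/n,1,1)$ for $\beta=1/n-1$. That evaluation is a finite sum of $\zeta_H'(-1,k/n)$, $\log\Gamma(k/n)$ and elementary terms, obtained from the multiplication formula for Barnes double zeta at rational first parameter. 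The three-cone determinant and action formulas bring in Barnes $G$-type data at the half-sums $|\boldsymbol\beta|/2$ and $\beta_j-|\boldsymbol\beta|/2$. For $(2,3,8)$ we have $|\boldsymbol\beta|=-49/24$, so these arguments are multiples of $1/48$. This is why the residues $r/48$ with $r\in R_B^\pm$ (odd $r$ prime to $3$) appear, both in $\mathcal G_B$ and in the final sum of $\mathcal Z_B$. The data $\boldsymbol\gamma$ only produce denominators $16$ and $8$.

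All the resulting $\Gamma$-values are then reduced to the list $\Gamma(1/4),\Gamma(1/8),\Gamma(3/8)$ and $\Gamma(r/48)$, $r\in R_B^\pm$. The tools for this are the reflection formula, the duplication and Gauss multiplication formulas, and the convention of keeping one representative of each pair $\{r,48-r\}$. In particular, the Chowla--Selberg factor $[\Gamma(1/8)\Gamma(3/8)]^4$ from the two tori gets merged with the other octic values. Terms such as $\Gamma(5/8),\Gamma(7/8)$ get eliminated by reflection, and this produces the factor $\sin(\pi/8)$-type algebraic numbers. I expect this to be the origin of $(1+\sqrt2)^{1/4}$ and of part of the powers of $2$ and $3$. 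In the same way, the $\zeta_H'(-1,\cdot)$ contributions are paired as $\zeta_H'(-1,x)+\zeta_H'(-1,1-x)$, and the multiplication formula is used to normalize the orbits at denominators $2,3,4,8,48$. The coefficients of these pairs should come out as those in $\mathcal Z_B$.

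The main obstacle is this bookkeeping. Several independent closed formulas, each carrying its own normalization, must combine consistently, and the $\log 2$, $\log 3$, $\log\pi$ exponents and the algebraic factor have to cancel down to $3^{17/24}2^{-137/36}\pi^{-5}(1+\sqrt2)^{1/4}$. I would organize the computation by tracking each transcendental basis element separately: $\log\Gamma(r/48)$, $\zeta_H'(-1,r/48)$, $\zeta_R'(-1)$, $\log2$, $\log3$, $\log\pi$, $\log(1+\sqrt2)$ and the rationals. I would also cross-check the total with a numerical evaluation of each quotient determinant before simplifying.
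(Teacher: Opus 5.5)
Your plan matches the paper's proof: take logarithms in \eqref{eq:bolza-determinant-symmetric}, substitute \eqref{eq:bolza-torus-determinant}, \eqref{eq:bolza-octahedron-explicit} and \eqref{eq:bolza-five-cone-explicit} together with the closed three-cone formulas of \cite{KalvinCV}, and simplify with the shift and reflection identities for $\Gamma$ and $\zeta_H$. The one economy you miss is that you need no Barnes double zeta evaluation of $C(-1/2)$, $C(-2/3)$, $C(-7/8)$: once the three-cone determinants are expanded, all $C(\beta)$ terms cancel identically, because $B$ is smooth (for instance, the net per-quotient weights of $C(-1/2)$ give $2(-2)+(-6)-2(-5)=0$).
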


\begin{remark}
Numerical evaluation of the exact formula in Theorem~\ref{thm:bolza-final} gives
$$
 \det\Delta_B
 =4.7227328044455737935557601334449190323388\ldots.
$$
The first fourteen decimal places, $\det\Delta_B\approx4.72273280444557$, were previously obtained
by Strohmaier and Uski numerically from eigenvalues and the Selberg trace formula
\cite[Section~7.1]{StrohmaierUski}. Their computation therefore provides an independent numerical
check of our exact formula.
\end{remark}

\begin{proof}[Proof of Theorem~\ref{thm:bolza-final}]
Taking logarithms in the $V_4$-reduction \eqref{eq:bolza-determinant-symmetric} and substituting
\eqref{eq:bolza-torus-determinant}, \eqref{eq:bolza-octahedron-explicit}, and
\eqref{eq:bolza-five-cone-explicit}, we obtain \eqref{eq:bolza-final-expanded} after applying the
shift and reflection identities for the Hurwitz zeta and gamma functions. In this simplification all
universal local terms $C(\beta)$ cancel. The cancellation is structural: the cone points occur
only in the quotient problems, whereas the reconstructed Bolza metric is smooth.
\end{proof}

\section{The Klein quartic}

\subsection{Geometry of the quotient orbifolds}

Let $K$ be the Klein quartic with its smooth metric of Gaussian curvature $-1$. Its
orientation-preserving automorphism group is
$$
 G=PSL(2,7),\qquad |G|=168,
$$
and
$$
 \Area(K)=8\pi.
$$
Choose subgroups
$$
 C_3,\qquad C_7,\qquad
 F_{21}=C_7\rtimes C_3
$$
of orders $3$, $7$, and $21$, respectively, and set
$$
 T_K=K/C_3,\qquad
 S_{7,7,7}=K/C_7,\qquad
 S_{3,3,7}=K/F_{21}.
$$
The Hurwitz action of $G$ has signature $(2,3,7)$.

An order-three subgroup fixes two points of $K$. For the genus $g(T_K)$ of $T_K$,
Riemann--Hurwitz therefore gives
$$
 4=3\bigl(2g(T_K)-2\bigr)+2(3-1),
$$
so $g(T_K)=1$. The smooth hyperbolic metric on $K$ descends to a curvature $-1$ singular
metric on $T_K$, with the two fixed points of $C_3$ becoming its cone points. Thus
$$
 T_K:\quad
 \text{hyperbolic torus with two cone points of angle }\frac{2\pi}{3}.
$$

An order-seven subgroup fixes three points. Consequently
$$
 S_{7,7,7}:\quad
 \text{hyperbolic sphere with three cone points of angle }\frac{2\pi}{7}.
$$
Thus $S_{7,7,7}$ is the double of a hyperbolic triangle with angles
$\frac{\pi}{7},\frac{\pi}{7},\frac{\pi}{7}$.

The group $F_{21}$ contains its normal subgroup $C_7$ and seven subgroups of order three. Its
quotient has one orbifold point of order seven and two orbifold points of order three:
$$
 S_{3,3,7}:\quad
 \text{hyperbolic sphere with three cone points of angles }
 \frac{2\pi}{3},\frac{2\pi}{3},\frac{2\pi}{7}.
$$
Equivalently, it is the double of a hyperbolic triangle with angles
$\frac{\pi}{3},\frac{\pi}{3},\frac{\pi}{7}$.

The same signatures follow immediately from the orbifold Riemann--Hurwitz formula. The areas of the
quotient metrics are
$$
 \Area(T_K)=\frac{8\pi}{3},\qquad
 \Area(S_{7,7,7})=\frac{8\pi}{7},\qquad
 \Area(S_{3,3,7})=\frac{8\pi}{21}.
$$
These quotients are particularly well suited to an explicit determinant calculation. The two
spherical quotients are three-cone spheres. The remaining quotient $T_K$ is a CM torus of
discriminant $-7$ and admits an explicit degree-seven Belyi map to the hyperbolic sphere
$S_{3,3,7}$.

\subsection{Reduction to quotient determinants}

We apply the spectral reduction of Lemma~\ref{lem:artin} to the action of $G$ on the Klein quartic
and obtain the following determinant identity.

\begin{proposition}\label{prop:klein-determinant}
The determinant $\det\Delta_K$ of the smooth hyperbolic Klein quartic $K$ satisfies
\be\label{eq:klein-determinant}
 \det\Delta_K
 =
 \frac{
  \bigl(\det\Delta_{T_K}\bigr)^3
  \det\Delta_{S_{7,7,7}}
 }{
  \bigl(\det\Delta_{S_{3,3,7}}\bigr)^3
 }.
\ee
\end{proposition}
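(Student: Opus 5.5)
The plan is to apply Lemma~\ref{lem:artin} to the isometric action of $G=PSL(2,7)$ on $K$, using the virtual permutation-representation relation
\be\label{eq:klein-character-relation}
 \Reg_G+3\,\Ind_{F_{21}}^G\mathbf 1
 =3\,\Ind_{C_3}^G\mathbf 1+\Ind_{C_7}^G\mathbf 1 .
\ee
Since $\Reg_G=\Ind_{\{1\}}^G\mathbf 1$ and $X/\{1\}=K$, Lemma~\ref{lem:artin} with $a_{\{1\}}=1$, $a_{F_{21}}=3$, $a_{C_3}=-3$, $a_{C_7}=-1$ gives
$$
 \zeta_K(s)+3\zeta_{S_{3,3,7}}(s)=3\zeta_{T_K}(s)+\zeta_{S_{7,7,7}}(s).
$$
Differentiating at $s=0$ then yields \eqref{eq:klein-determinant}. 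So the only real work is verifying \eqref{eq:klein-character-relation}, which I will do by comparing characters class by class.

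For the characters I will use the standard formula
$$
 \Ind_H^G\mathbf 1(g)=\frac{|G|}{|H|}\,\frac{|g^G\cap H|}{|g^G|},
$$
together with the conjugacy classes of $PSL(2,7)$. These are the identity, $21$ involutions, $56$ elements of order three (one class), $42$ elements of order four, and two classes $7A$, $7B$ of $24$ elements each.

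\emph{Identity.} The left-hand side is $168+3\cdot 8=192$ and the right-hand side is $3\cdot 56+24=192$.

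\emph{Orders two and four.} None of $C_3$, $C_7$, $F_{21}$ has even order, and the regular character vanishes off the identity. Hence every term is zero.

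\emph{Order three.} $F_{21}$ contains $14$ elements of order three, all lying in the single class of size $56$. This gives $\Ind_{F_{21}}^G\mathbf 1=8\cdot14/56=2$. Likewise $\Ind_{C_3}^G\mathbf 1=56\cdot 2/56=2$, while $\Ind_{C_7}^G\mathbf 1=0$. Both sides therefore equal $6$.

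\emph{Order seven.} The six nontrivial elements of $C_7$ split three and three between $7A$ and $7B$, because $x$ and $x^{-1}$ lie in different classes in $PSL(2,7)$ (equivalently, the normalizer $F_{21}$ acts on $C_7\setminus\{1\}$ with two orbits of size three). This gives $\Ind_{C_7}^G\mathbf 1=24\cdot3/24=3$, and, since $F_{21}$ has the same order-seven elements, $\Ind_{F_{21}}^G\mathbf 1=8\cdot 3/24=1$. Both sides equal $3$, and the relation holds.

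The main point requiring care is this splitting of the order-seven elements between $7A$ and $7B$. I would justify it from the fact that $-1$ is not a square modulo $7$, so $x\not\sim x^{-1}$ in $PSL(2,7)$, combined with $N_G(C_7)=F_{21}$ acting on $C_7$ by the squaring automorphism.

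Finally, I will identify the quotients $K/H$ with $T_K$, $S_{7,7,7}$, $S_{3,3,7}$ as described in the preceding subsection. The identification $U_H$ from the spectral-reduction subsection ensures that each quotient zeta function equals the zeta function of the restriction of $\Delta_K$ to $H$-invariant functions. This completes the argument.
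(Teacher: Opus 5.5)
Your proposal is correct and matches the paper's proof: you use the same virtual relation $3\,\Ind_{C_3}^G\mathbf 1+\Ind_{C_7}^G\mathbf 1=\Reg_G+3\,\Ind_{F_{21}}^G\mathbf 1$, check it class by class with the permutation characters, and then apply Lemma~\ref{lem:artin} and differentiate at $s=0$. Your justification of the $3+3$ split of $C_7\setminus\{1\}$ between $7A$ and $7B$, via the normalizer $F_{21}$, supplies a detail the paper only records in its character table.
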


\begin{proof}
For the subgroups chosen above, the following identity of $G$-representations holds:
\be\label{eq:klein-character-relation}
 3\,\Ind_{C_3}^{G}\mathbf 1+
 \Ind_{C_7}^{G}\mathbf 1
 =
 \Reg_G+
 3\,\Ind_{F_{21}}^{G}\mathbf 1.
\ee
The identity can be checked directly on the six conjugacy classes
$$
 1A,\qquad 2A,\qquad 3A,\qquad 4A,\qquad 7A,\qquad 7B.
$$
The relevant permutation characters are
$$
\begin{array}{c|rrrrrr}
 &1A&2A&3A&4A&7A&7B\\ \hline
 \Ind_{C_3}^{G}\mathbf 1
   &56&0&2&0&0&0\\
 \Ind_{C_7}^{G}\mathbf 1
   &24&0&0&0&3&3\\
 \Ind_{F_{21}}^{G}\mathbf 1
   &8&0&2&0&1&1\\
 \Reg_G
   &168&0&0&0&0&0
\end{array}
$$
Thus the two sides of \eqref{eq:klein-character-relation} have the same character on every conjugacy
class.

Applying Lemma~\ref{lem:artin} to \eqref{eq:klein-character-relation}, we obtain the exact
spectral-zeta identity
$$
 3\zeta_{T_K}(s)+\zeta_{S_{7,7,7}}(s)
 =
 \zeta_K(s)+3\zeta_{S_{3,3,7}}(s).
$$
Differentiating at $s=0$, we obtain \eqref{eq:klein-determinant}.
\end{proof}

After rescaling, the three-cone formula of \cite{KalvinCV} evaluates the two spherical determinants
in \eqref{eq:klein-determinant}. Thus only the torus determinant $\det\Delta_{T_K}$ remains to be
found.

\subsection{The torus quotient}

Hoshino and Nakamura \cite{HoshinoNakamura} give the following model of the torus quotient
$T_K=K/C_3$:
\be\label{eq:klein-elliptic-model}
 T_K:\qquad v^2=4u^3+21u^2+28u.
\ee
It is isomorphic to the curve
$$
 Y^2=X^3-35X-98
$$
by $X=4u+7$ and $Y=4v$. In particular,
$$
 j(T_K)=-15^3=-3375,\qquad
 T_K\simeq
 \mathbb C/\left(
 \mathbb Z+\frac{1+i\sqrt7}{2}\mathbb Z\right).
$$

For this model set
$$
 \omega_K=\frac{du}{v},
 \qquad
 m_K=e^{2\sigma_K}|\omega_K|^2.
$$
The quotient metric $m_K$ has curvature $-1$ and two cone points of angle $2\pi/3$.

On the model \eqref{eq:klein-elliptic-model}, define
\be\label{eq:klein-s-map}
 s(u,v)=\frac12\left(
 (u^2+7u+7)v+7u^3+35u^2+49u+16\right)
\ee
and
$$
 J=\frac {i\sqrt 3}9 \left(s- 3 e^{\pi i/3}\right).
$$
Then $J:T_K\to\mathbb P^1$ has degree $7$ and passport
$
 [\,3,3,1;\,3,3,1;\,7\,]$.
It is the natural map
$$
 K/C_3\longrightarrow K/F_{21}=S_{3,3,7}.
$$
The two unramified points over $0$ and $1$ are
$$
 p_+=(3 e^{\pi i/3}-1,5(1-3 e^{\pi i/3})),\qquad
 p_-=\overline{ p_+}.
$$
All other points over $0,1,\infty$ become regular after the $(3,3,7)$-metric is pulled back. The
points $p_{\pm}$ remain cone points, both with angle $2\pi/3$.

As for the Bolza surface, the map $J$ transfers the nonflat part of the calculation to the target
sphere. The singular hyperbolic metric on $T_K$ is the pullback of the three-cone metric below.
The Liouville equation and the norm of $f_K=dJ/\omega_K$ then express the global and local terms
in the anomaly formula through the target data. The reference determinant is that of the flat CM
torus.

Let
\be\label{eq:klein-target-divisor}
 \boldsymbol\beta=
 -\frac23\cdot0
 -\frac23\cdot1
 -\frac67\cdot\infty
\ee
and let
$$
 m_{\boldsymbol\beta}=e^{2\varphi}|dt|^2
$$
be the unit-area metric on the target sphere representing $\boldsymbol\beta$. Its Gaussian
curvature is $-8\pi/21$. Hence $(8\pi/21)m_{\boldsymbol\beta}$ is precisely the curvature $-1$
metric on $S_{3,3,7}$. By \eqref{eq:three-cone-regular-parts}, the regular parts of $\varphi$ at
$0$ and $1$ are both $\Psi(-2/3,-2/3,-6/7)$, while the regular part at $\infty$ is
$\Psi(-6/7,-2/3,-2/3)$. For this target sphere we use the Liouville action
$\mathcal S_{\boldsymbol\beta}[\varphi]$ defined by \eqref{eq:three-cone-liouville-action}. The
evaluation in \cite[Theorem~1.2 and Remark~3.5]{KalvinCV} gives its closed explicit form; the
regular parts entering the action are expressed by \eqref{eq:Psi-definition}. We now transfer this
action through $J$ and evaluate the local functional $\mathcal H_{T_K}$.

\begin{lemma}\label{lem:klein-liouville-action}
Let $\mathcal S_{T_K}[\sigma_K]$ be the source-torus Liouville action
\eqref{eq:torus-liouville-action} of $m_K$, and let $\mathcal S_{\boldsymbol\beta}[\varphi]$ be
the target-sphere Liouville action \eqref{eq:three-cone-liouville-action} of the unit-area
hyperbolic metric $m_{\boldsymbol\beta}$ representing the divisor \eqref{eq:klein-target-divisor}.
Then
\be\label{eq:klein-torus-action}
 \begin{split}
 \mathcal S_{T_K}[\sigma_K]
={}&7\mathcal S_{\boldsymbol\beta}[\varphi]
 -8\pi\log2+\frac{24\pi}{7}\log3\\
 &-\frac{20\pi}{9}\log7-\frac{8\pi}{3}\log\pi
 -\frac{8\pi}{3}.
 \end{split}
\ee
\end{lemma}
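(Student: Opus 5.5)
I will follow the proof of Lemma~\ref{lem:bolza-liouville-action} step by step, with the degree-$24$ map replaced by the degree-$7$ map $J:T_K\to\mathbb P^1$. First I write the source action \eqref{eq:torus-liouville-action} for $m_K$. Its curvature is $-1$ and its cone orders are both $-1/3$, so
\[
 \mathcal S_{T_K}[\sigma_K]=-\int_{T_K}\sigma_K\,dA_K-\frac{2\pi}{3}\bigl(\sigma_{K,+}+\sigma_{K,-}\bigr).
\]
Since $(8\pi/21)m_{\boldsymbol\beta}$ is the curvature $-1$ metric on $S_{3,3,7}$, we have $m_K=\frac{8\pi}{21}J^*m_{\boldsymbol\beta}$. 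This gives
\[
 \sigma_K=\varphi\circ J+\log|f_K|+\tfrac12\log\tfrac{8\pi}{21},\qquad f_K=\frac{dJ}{\omega_K}.
\]
Because $\omega_K$ has no zeros, $f_K$ is meromorphic on $T_K$.

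For the global term I integrate this identity against $dA_K=\frac{8\pi}{21}J^*dA_{\boldsymbol\beta}$. The three contributions are handled separately.
\begin{itemize}
\item The term $\varphi\circ J$ gives $\frac{8\pi}{3}\int\varphi\,dA_{\boldsymbol\beta}$. I solve \eqref{eq:three-cone-liouville-action} for this integral, using $|\boldsymbol\beta|+2=-4/21$. This expresses it through $\mathcal S_{\boldsymbol\beta}[\varphi]$ and the $\Psi$-values $\Psi(-\tfrac23,-\tfrac23,-\tfrac67)$ (twice) and $\Psi(-\tfrac67,-\tfrac23,-\tfrac23)$. The coefficient of $\mathcal S_{\boldsymbol\beta}$ comes out as $-\frac{8\pi}{3}\cdot\frac{21}{8\pi}=-7$, which matches the leading term $7\mathcal S_{\boldsymbol\beta}$ after the overall sign.
\item The constant term contributes $\frac{8\pi}{3}\cdot\frac12\log\frac{8\pi}{21}$.
\item The term $\log|f_K|$ I transfer through the norm $N_J(f_K)$.
\end{itemize}

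The passport $[3,3,1;3,3,1;7]$ determines the divisor of the norm. Over $0$ there are two cubic points, each giving a double zero of $f_K$, so the norm vanishes to order $4$ at $t=0$. The same count gives order $4$ at $t=1$. Over $\infty$ there is one pole of order $7$, where $f_K$ has a pole of order $8$, consistent with $4+4=8$. Hence
\[
 N_J(f_K)=c_K\,t^4(t-1)^4 .
\]
The logarithmic integrals $\int\log|t|\,dA_{\boldsymbol\beta}$ and $\int\log|t-1|\,dA_{\boldsymbol\beta}$ follow from the Liouville equation and Green's identity, exactly as in \eqref{eq:bolza-logarithmic-integrals}. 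With curvature $-8\pi/21$ the prefactor is $21/4$ instead of $24$. They are equal by symmetry: each is $\frac{21}{4}\bigl[\Psi(-\tfrac23,-\tfrac23,-\tfrac67)-\Psi(-\tfrac67,-\tfrac23,-\tfrac23)\bigr]$.

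For the local terms I compute $|f_K(p_\pm)|$ by evaluating $2v\,dJ/du$ at $p_+$, or equivalently $v\,dJ/du$ since $\omega_K=du/v$. Complex conjugation gives the same modulus at $p_-$. Then
\[
 \sigma_{K,\pm}=\Psi\bigl(-\tfrac23,-\tfrac23,-\tfrac67\bigr)+\tfrac12\log\tfrac{8\pi}{21}+\log|f_K(p_\pm)| .
\]

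The main obstacle is the constant $|c_K|$. It is complicated by the complex coefficients $i\sqrt3/9$ and $e^{\pi i/3}$ in $J$, and by the fact that the ramification points over $0$ and $1$ are not given as roots of a rational polynomial. I would obtain $|c_K|$ from the single pole of order $7$ over $\infty$, located at the point at infinity of the model, where the local computation is cleanest. If $J(z)=\lambda z^{-7}+\dots$ in the flat coordinate $z$, the product of $dJ/dz$ over the seven branches is asymptotically $7^7\lambda^8/\lambda^{8/7\cdot 7}$ times powers of $t$. More precisely, writing $z\sim(\lambda/t)^{1/7}\zeta^k$, one gets $|N_J|\sim 7^7|\lambda|^{-1}|t|^{8}$, so $|c_K|=7^7/|\lambda|$. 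Here $\lambda$ is read off from the expansion of $s$ at infinity in the local parameter $z=-2u/v+\dots$, which satisfies $dz=\omega_K$ to leading order. As a cross-check I would redo the computation at the cubic points over $0$, where $s-3e^{\pi i/3}$ factors through a resultant computation. Finally I substitute everything into the source action. The $\Psi$-terms must cancel, because the right side of \eqref{eq:klein-torus-action} contains no $\Psi$; this is a strong consistency check, just as in the Bolza case. Collecting the remaining powers of $2,3,7,\pi$ and the constant $-\frac{8\pi}{3}$, which arises from the $-1$ in \eqref{eq:three-cone-liouville-action}, should then yield \eqref{eq:klein-torus-action}.
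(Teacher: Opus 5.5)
\textbf{Assessment: same route as the paper, but two setup errors in the local terms mean the plan as written does not yield the identity.}

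Your overall route matches the paper's. You write $m_K=\frac{8\pi}{21}J^*m_{\boldsymbol\beta}$ and push the global integral down using $\deg J=7$ and the norm $N_J(f_K)=c_Kt^4(t-1)^4$. You use the Green-identity formulas with prefactor $21/4$, and you get $|c_K|=7^7/|\lambda|=3\sqrt3\,7^7$ from the pole at infinity. All of that is correct.

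\textbf{The local part is set up incorrectly.} Write $\Psi_a=\Psi(-\tfrac23,-\tfrac23,-\tfrac67)$.
\begin{itemize}
\item \emph{Cone order.} The cone angle at $p_\pm$ is $2\pi/3=2\pi(\beta+1)$, so the cone order is $\beta=-2/3$, not $-1/3$. The source action is therefore $-\int_{T_K}\sigma_K\,dA_K-\frac{4\pi}{3}(\sigma_{K,+}+\sigma_{K,-})$, not the version with $-\frac{2\pi}{3}$.
\item \emph{Regular parts.} Your formula for the regular parts drops the contribution of $\varphi\circ J$. Since $J$ is unramified at $p_\pm$, you have $J-J(p_\pm)=f_K(p_\pm)z_\pm+O(z_\pm^2)$. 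Near $t=J(p_\pm)$ the target potential is $\varphi=-\tfrac23\log|t-J(p_\pm)|+\Psi_a+o(1)$, so $\varphi\circ J$ contributes $-\tfrac23\log|f_K(p_\pm)|$. This combines with $+\log|f_K(p_\pm)|$ from the derivative term to give
\[
\sigma_{K,\pm}=\Psi_a+\tfrac12\log\tfrac{8\pi}{21}+\tfrac13\log|f_K(p_\pm)|,
\]
with coefficient $\beta+1=\tfrac13$, not $1$.
\end{itemize}

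\textbf{Why these errors matter.} After you eliminate $\int\varphi\,dA_{\boldsymbol\beta}$ via \eqref{eq:three-cone-liouville-action}, the $\Psi(-\tfrac67,-\tfrac23,-\tfrac23)$ terms cancel inside the global integral. The term $-\int\sigma_K\,dA_K$ then contributes $+\frac{8\pi}{3}\Psi_a$.
\begin{itemize}
\item With the correct local term, $-\frac{4\pi}{3}\cdot 2\Psi_a=-\frac{8\pi}{3}\Psi_a$, and this cancels exactly.
\item With your coefficient, $+\frac{4\pi}{3}\Psi_a$ survives, so the very consistency check you invoke would fail.
\item The local weights of $\log\frac{8\pi}{21}$ and $\log|f_K(p_\pm)|$ would also be wrong: $-\frac{2\pi}{3}$ and $-\frac{4\pi}{3}$ instead of the correct $-\frac{4\pi}{3}$ and $-\frac{8\pi}{9}$.
\end{itemize}

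\textbf{A further slip in $f_K$.} You have $f_K=dJ/\omega_K=v\,dJ/du$, with the derivative taken along the curve, since $s$ depends on both $u$ and $v$. Writing ``$2v\,dJ/du$'' is not equivalent; that factor belongs to the Bolza differential $dX/(2Y)$, and here it would shift the $\log2$ coefficient.

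\textbf{Completing the calculation.} You also need the actual value of $|f_K(p_\pm)|$. From $|dJ/ds|=\sqrt3/9$ and $|ds/dz(p_\pm)|=49\,|2-3e^{\pm\pi i/3}|=49\sqrt7$, one gets $|f_K(p_\pm)|=\frac{49\sqrt{21}}{9}$. With the corrected cone order, the corrected regular parts, and this value, collecting the powers of $2,3,7,\pi$ together with the constant $-\frac{8\pi}{3}$ gives exactly \eqref{eq:klein-torus-action}.
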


\begin{proof}
Let $z_{\pm}$ be a local flat coordinate centered at $p_{\pm}$ and normalized by
$dz_{\pm}=\omega_K$. Write
$$
 \sigma_K(z_{\pm})
 =-\frac23\log|z_{\pm}|+\sigma_{K,{\pm}}+o(1).
$$
The source divisor is
$$
 -\frac23\cdot p_+-\frac23\cdot p_-.
$$
Inserting $K_{m_K}=-1$ and the two cone orders $-2/3$ into the torus Liouville action
\eqref{eq:torus-liouville-action} gives
\be\label{eq:klein-source-action}
 \mathcal S_{T_K}[\sigma_K]
 =-\int_{T_K}\sigma_K\,dA_K
 -\frac{4\pi}{3}(\sigma_{K,+}+\sigma_{K,-}).
\ee
For the target divisor, $|\boldsymbol\beta|+2=-4/21$, and \eqref{eq:three-cone-liouville-action}
becomes
\be\label{eq:klein-target-action}
 \begin{split}
 \mathcal S_{\boldsymbol\beta}[\varphi]
={}&-\frac{8\pi}{21}
 \left(\int_{\mathbb P^1}\varphi\,dA_{\boldsymbol\beta}-1\right)\\
 &-\frac{8\pi}{3}
 \Psi\left(-\frac23,-\frac23,-\frac67\right)
 +\frac{16\pi}{7}
 \Psi\left(-\frac67,-\frac23,-\frac23\right).
 \end{split}
\ee
We express the global integral and the two regular parts in \eqref{eq:klein-source-action} through
these target data.

Set
$$
 f_K=\frac{dJ}{\omega_K}.
$$
In any local flat coordinate $z$ normalized by $dz=\omega_K$, one has $f_K=dJ/dz$. A direct
differentiation of \eqref{eq:klein-s-map} gives
$$
 \frac{ds}{dz}(p_+)=49(2-3 e^{\pi i/3}),
 \qquad
 \frac{ds}{dz}(p_-)=49(2-3 e^{-\pi i /3}).
$$
It follows that
\be\label{eq:klein-local-derivative}
 |f_K(p_{\pm})|
 =\frac{49\sqrt{21}}9.
\ee
For a regular value $t$, define
$$
 N_J(f_K)(t)=\prod_{x\in J^{-1}(t)}f_K(x)
$$
to be the norm of $f_K$ along $J$. Its divisor is determined by the passport of $J$. The two
cubic points over $0$ contribute a zero of total order $4$, and the same holds over $1$. At
the unique point over $\infty$, $J$ has a pole of order $7$, and $f_K=dJ/dz$ therefore has a
pole of order $8$. Hence
$$
 N_J(f_K)=c_Kt^4(t-1)^4
$$
for a nonzero constant $c_K$.

At infinity, a flat coordinate $z$, chosen so that $dz=\omega_K$, satisfies
$$
 u=z^{-2}+O(z^{-1}),
 \qquad
 v=-2z^{-3}+O(z^{-2}).
$$
Consequently,
$$
 s=-z^{-7}+O(z^{-6}),
 \qquad
 J=-\frac{i\sqrt 3}{9}z^{-7}+O(z^{-6}).
$$
Let $z_1,\ldots,z_7$ be the local inverse branches over $t$ near infinity. Then
$$
 \prod_{j=1}^7z_j=\frac{i\sqrt 3}{9t}(1+o(1)),
 \qquad
 f_K(z)=\frac{i7\sqrt 3}{9}z^{-8}+O(z^{-7}).
$$
Therefore
$$
 N_J(f_K)(t)
 =\left(\frac{i7\sqrt 3}{9}\right)^7
 \left(\frac{i\sqrt 3}{9t}\right)^{-8}(1+o(1))
 =7^7(-i3\sqrt 3)t^8(1+o(1)),
$$
and hence $c_K=-i7^73\sqrt 3$. Thus
\be\label{eq:klein-norm}
 |N_J(f_K)|
 =3\sqrt3\,7^7|t|^4|t-1|^4.
\ee

As in the Bolza calculation, the Liouville equation for $m_{\boldsymbol\beta}$ yields
\be\label{eq:klein-logarithmic-integrals}
 \begin{split}
 \int_{\mathbb P^1}\log|t|\,dA_{\boldsymbol\beta}
 &=\frac{21}{4}\left[
 \Psi\left(-\frac23,-\frac23,-\frac67\right)
 -\Psi\left(-\frac67,-\frac23,-\frac23\right)\right],\\
 \int_{\mathbb P^1}\log|t-1|\,dA_{\boldsymbol\beta}
 &=\frac{21}{4}\left[
 \Psi\left(-\frac23,-\frac23,-\frac67\right)
 -\Psi\left(-\frac67,-\frac23,-\frac23\right)\right].
 \end{split}
\ee
For the metric potential of 
$$
 m_K=\frac{8\pi}{21}J^*m_{\boldsymbol\beta}
 =e^{2\sigma_K}|\omega_K|^2
$$
we therefore have
$$
 \sigma_K=\varphi\circ J+
 \log|f_K|
 +\frac12\log\frac{8\pi}{21}.
$$
Since $J$ has degree $7$, the two nonconstant terms satisfy
$$
 \ba
 \int_{T_K}\varphi\circ J\,dA_K
 &=\frac{8\pi}{21}\,7
 \int_{\mathbb P^1}\varphi\,dA_{\boldsymbol\beta},\\
 \int_{T_K}\log|f_K|\,dA_K
 &=\frac{8\pi}{21}
 \int_{\mathbb P^1}\log|N_J(f_K)|\,dA_{\boldsymbol\beta}.
 \ea
$$
The second identity follows, as before, from the definition of $N_J(f_K)$. The constant term
contributes
$$
 \frac{4\pi}{3}\log\frac{8\pi}{21},
$$
since $\Area(T_K,m_K)=8\pi/3$. Substituting \eqref{eq:klein-norm} and
\eqref{eq:klein-logarithmic-integrals} therefore gives
\be\label{eq:klein-global-integral}
 \begin{split}
 \int_{T_K}\sigma_K\,dA_K
={}&\frac{8\pi}{21}\left(
 7\int_{\mathbb P^1}\varphi\,dA_{\boldsymbol\beta}
 +\log(3\sqrt3\,7^7)+\frac72\log\frac{8\pi}{21}\right)\\
 &+16\pi\left[
 \Psi\left(-\frac23,-\frac23,-\frac67\right)
 -\Psi\left(-\frac67,-\frac23,-\frac23\right)\right].
 \end{split}
\ee

Since $J$ is unramified at $p_{\pm}$ and the target cone order at $0$ and $1$ is $-2/3$,
the pullback relation and \eqref{eq:klein-local-derivative} give the regular parts of the potential
of $m_K$ at $p_{\pm}$, in the flat coordinates $z_\pm$, as
\be\label{eq:klein-source-regular-parts}
 \sigma_{K,\pm}
 =\Psi\left(-\frac23,-\frac23,-\frac67\right)
 +\frac12\log\frac{8\pi}{21}
 +\frac13\log\frac{49\sqrt{21}}9.
\ee
Substitution of \eqref{eq:klein-global-integral} and \eqref{eq:klein-source-regular-parts} into the
source action \eqref{eq:klein-source-action}, followed by the use of \eqref{eq:klein-target-action},
yields \eqref{eq:klein-torus-action}.
\end{proof}

\begin{proposition}\label{prop:klein-torus-determinant}
For the hyperbolic torus $(T_K,m_K)$, whose two cone angles are $2\pi/3$, the determinant of the
Friedrichs Laplacian is
\be\label{eq:klein-torus-determinant}
 \begin{split}
 \log\det\Delta_{T_K}
={}&\log\frac{
 \bigl[\Gamma(1/7)\Gamma(2/7)\Gamma(4/7)\bigr]^2}
 {2^{8/3}3^{25/63}7^{1/9}\pi^{29/9}}
 +\frac29-\frac{7}{12\pi}
 \mathcal S_{\boldsymbol\beta}[\varphi]\\
 &-\frac89\Psi\left(-\frac23,-\frac23,-\frac67\right)
 -2C\left(-\frac23\right).
 \end{split}
\ee
Here $\mathcal S_{\boldsymbol\beta}[\varphi]$ denotes the Liouville action of the unit-area
hyperbolic metric $m_{\boldsymbol\beta}$ representing the divisor \eqref{eq:klein-target-divisor}.
This action is evaluated in a  closed explicit form  in \cite[Theorem~1.2 and Remark~3.5]{KalvinCV}. The functions $\Psi$ and $C$ are defined in \eqref{eq:Psi-definition} and
\eqref{eq:conical-constant}, respectively.
\end{proposition}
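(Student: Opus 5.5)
The proof will follow the Bolza case (Proposition~\ref{prop:bolza-torus-determinant}) step by step. I will apply the torus form \eqref{eq:torus-action-anomaly} of the singular anomaly formula with reference metric $|\omega_K|^2$ and cone orders $\beta_\pm=-2/3$. This gives
\[
 \log\frac{\det\Delta_{T_K}/(8\pi/3)}{\det\Delta_{|\omega_K|^2}/\Area(T_K,|\omega_K|^2)}
 =-\frac1{12\pi}\bigl(\mathcal S_{T_K}[\sigma_K]-\pi\log\mathcal H_{T_K}[\sigma_K]\bigr)-2C\Bigl(-\frac23\Bigr),
\]
using $\Area(T_K,m_K)=8\pi/3$. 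Three ingredients must then be supplied: the Liouville action, the $\mathcal H$-functional, and the normalized flat determinant.

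For the Liouville action I will simply insert Lemma~\ref{lem:klein-liouville-action}. It contributes $-\frac{7}{12\pi}\mathcal S_{\boldsymbol\beta}[\varphi]$, which is the coefficient in the claim, together with explicit constants. The constant $-\frac{8\pi}{3}$ produces the term $+\frac29$.

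For the $\mathcal H$-functional, \eqref{eq:torus-H-functional} with $\beta+1-1/(\beta+1)=\frac13-3=-\frac83$ gives $\log\mathcal H_{T_K}=-\frac{16}{3}(\sigma_{K,+}+\sigma_{K,-})$. Substituting \eqref{eq:klein-source-regular-parts} yields
\[
 -\frac{32}{3}\Psi\Bigl(-\frac23,-\frac23,-\frac67\Bigr)-\frac{16}{3}\log\frac{8\pi}{21}-\frac{32}{9}\log\frac{49\sqrt{21}}9 .
\]
After multiplication by $\pi/(12\pi)=1/12$, the $\Psi$ term becomes $-\frac89\Psi$, which matches the statement.

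For the flat reference term, the Kronecker limit formula in the normalization of \eqref{eq:bolza-flat-determinant} gives $\det\Delta_{|\omega|^2}/\Area=\operatorname{Im}\tau\,|\eta(\tau)|^4$ for a lattice normalized to $\mathbb Z+\tau\mathbb Z$. Here $\tau_K=(1+i\sqrt7)/2$. The Chowla--Selberg formula for discriminant $-7$ expresses $|\eta(\tau_K)|^4$ through $[\Gamma(1/7)\Gamma(2/7)\Gamma(4/7)]^2$ divided by $\Gamma(3/7)\Gamma(5/7)\Gamma(6/7)$ with powers of $2\pi$ and $7$. Reflection formulas then turn the latter product into powers of $\pi$ and the quantity $\prod\sin(\pi k/7)$ over residues and nonresidues, and this quantity equals a power of $7$.

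One subtlety must be handled here. The period lattice of $\omega_K=du/v$ is not $\mathbb Z+\tau\mathbb Z$ but a multiple $\lambda(\mathbb Z+\tau\mathbb Z)$. The normalized quantity $\det/\Area$ scales like $\lambda^{-2\zeta(0)}/\lambda^2$ with $\zeta(0)=-1$ for smooth tori, so it is actually scale invariant. Hence only $\tau_K$ matters, exactly as in the Bolza case.

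Finally I will collect all the powers of $2$, $3$, $7$, and $\pi$, and check them against $2^{-8/3}3^{-25/63}7^{-1/9}\pi^{-29/9}$. The main obstacle is the Chowla--Selberg step: pinning down the exact constant for $|\eta(\tau_K)|^4$ with the correct powers of $7$ and $\pi$. I would fix it by comparing with the known evaluation of $\eta\bigl((1+i\sqrt7)/2\bigr)$ and checking numerically. The remaining bookkeeping is routine.
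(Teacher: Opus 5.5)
Your proposal matches the paper's proof. Both use the torus anomaly formula with the flat reference $|\omega_K|^2$, take $\mathcal S_{T_K}$ from Lemma~\ref{lem:klein-liouville-action}, obtain $\log\mathcal H_{T_K}$ from the regular parts \eqref{eq:klein-source-regular-parts}, and evaluate the flat term by Kronecker plus Chowla--Selberg, $\frac{\sqrt7}{2}|\eta(\tau_K)|^4=[\Gamma(1/7)\Gamma(2/7)\Gamma(4/7)]^2/(32\pi^4)$; your scale-invariance remark just makes explicit what the paper leaves implicit. One small slip: Chowla--Selberg gives $\frac{1}{4\pi\sqrt7}\,\Gamma(1/7)\Gamma(2/7)\Gamma(4/7)/\bigl(\Gamma(3/7)\Gamma(5/7)\Gamma(6/7)\bigr)$ with the numerator to the first power, and the square only appears after reflection, using $\sin(\pi/7)\sin(2\pi/7)\sin(3\pi/7)=\sqrt7/8$ (not a pure power of $7$).
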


\begin{proof}
Since both cone orders are equal to $-2/3$, the definition \eqref{eq:torus-H-functional}
specializes to
$$
 \log\mathcal H_{T_K}[\sigma_K]
 =-\frac{16}{3}(\sigma_{K,+}+\sigma_{K,-}).
$$
Substituting the espressions~\eqref{eq:klein-source-regular-parts} for $\sigma_{K,\pm}$,  we obtain
\be\label{eq:klein-torus-H}
 \begin{split}
 \log\mathcal H_{T_K}[\sigma_K]
={}&-\frac{32}{3}
 \Psi\left(-\frac23,-\frac23,-\frac67\right)
 -16\log2+\frac{32}{3}\log3\\
 &-\frac{32}{9}\log7-\frac{16}{3}\log\pi.
 \end{split}
\ee
For the two cone orders $-2/3$, the anomaly formula
\eqref{eq:torus-action-anomaly} can equivalently be written as
\be\label{eq:klein-source-anomaly}
 \begin{split}
 \log\frac{\det\Delta_{T_K}/(8\pi/3)}
 {\det\Delta_{|\omega_K|^2}/\Area(T_K,|\omega_K|^2)}
={}&-\frac1{12\pi}
 \left(
 \mathcal S_{T_K}[\sigma_K]
 -\pi\log\mathcal H_{T_K}[\sigma_K]
 \right)\\
 &-2C\left(-\frac23\right).
 \end{split}
\ee

The remaining reference term in \eqref{eq:klein-source-anomaly} is the normalized determinant of the
smooth flat metric $|\omega_K|^2$. Its period ratio is $\tau_K=(1+i\sqrt7)/2$. The Kronecker
limit formula \cite{OsgoodPhillipsSarnak} gives the first equality below, while the discriminant
$-7$ case of the Chowla--Selberg formula \cite{ChowlaSelberg} gives the second:
\be\label{eq:klein-flat-determinant}
 \begin{split}
 \frac{\det\Delta_{|\omega_K|^2}}
 {\Area(T_K,|\omega_K|^2)}
 &=\frac{\sqrt7}{2}
 \left|\eta\left(\frac{1+i\sqrt7}{2}\right)\right|^4\\
 &=\frac{\bigl[\Gamma(1/7)\Gamma(2/7)\Gamma(4/7)\bigr]^2}
 {32\pi^4}.
 \end{split}
\ee
All terms in \eqref{eq:klein-source-anomaly} are now explicit. The source-torus Liouville action is
evaluated in \eqref{eq:klein-torus-action}, the local contribution is \eqref{eq:klein-torus-H}, and
the normalized flat reference determinant is \eqref{eq:klein-flat-determinant}. Substitution and
simplification yield \eqref{eq:klein-torus-determinant}.
\end{proof}

\subsection{The spherical quotients}

The two remaining terms in \eqref{eq:klein-determinant} are determinants of hyperbolic three-cone
spheres. We evaluate them using the unit-area formula of \cite{KalvinCV} and the scaling law
\eqref{eq:determinant-scaling}.

\begin{lemma}\label{lem:klein-spherical-quotients}
Let $S_{3,3,7}$ and $S_{7,7,7}$ be the hyperbolic spheres of curvature $-1$ introduced above.
The corresponding divisors are
$$
 \ba
 \boldsymbol\beta
 &=-\frac23\cdot0-\frac23\cdot1-\frac67\cdot\infty,\\
 \boldsymbol\gamma
 &=-\frac67\cdot0-\frac67\cdot1-\frac67\cdot\infty.
 \ea
$$
Then
\begin{align}
 \log\det\Delta_{S_{3,3,7}}
 &=
 \log\det\Delta_{\boldsymbol\beta}
 +\frac1{63}\log\frac{8\pi}{21},
 \label{eq:klein-S337-explicit}\\
 \log\det\Delta_{S_{7,7,7}}
 &=
 \log\det\Delta_{\boldsymbol\gamma}
 -\frac{13}{21}\log\frac{8\pi}{7}.
 \label{eq:klein-S777-explicit}
\end{align}
Here $\det\Delta_{\boldsymbol\beta}$ and $\det\Delta_{\boldsymbol\gamma}$ are the Friedrichs
determinants of the corresponding unit-area three-cone metrics. Their closed explicit formulas were
found in \cite[Corollary~1.3]{KalvinCV}.
\end{lemma}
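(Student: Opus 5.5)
The plan is to obtain both formulas directly from the scaling law \eqref{eq:determinant-scaling}, using the value of $\zeta(0)$ from \eqref{eq:zeta-zero}. No Belyi pullback is needed, because $S_{3,3,7}$ and $S_{7,7,7}$ are themselves three-cone spheres. The first step is to identify each curvature $-1$ quotient metric as a constant multiple of the corresponding unit-area metric.

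\emph{Step 1: the scaling constants.} The unit-area metric $m_{\boldsymbol\beta}$ has constant curvature. By Gauss--Bonnet this curvature equals $2\pi\chi_{\boldsymbol\beta}(\mathbb P^1)=2\pi(|\boldsymbol\beta|+2)$. Multiplying a metric by $c$ divides its curvature by $c$. Hence the curvature $-1$ metric is $c\,m_{\boldsymbol\beta}$ with $c=-2\pi(|\boldsymbol\beta|+2)$, which is its area. For $\boldsymbol\beta$ we have $|\boldsymbol\beta|+2=-4/21$, so $c=8\pi/21=\Area(S_{3,3,7})$; this agrees with the observation made before Lemma~\ref{lem:klein-liouville-action}. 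For $\boldsymbol\gamma$ we have $|\boldsymbol\gamma|+2=-4/7$, so $c=8\pi/7=\Area(S_{7,7,7})$. Uniqueness of the constant-curvature metric representing a given three-point divisor (recalled in the three-cone subsection) identifies these rescaled metrics with the quotient metrics on $S_{3,3,7}$ and $S_{7,7,7}$.

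\emph{Step 2: $\zeta(0)$.} Next evaluate \eqref{eq:zeta-zero} with $\chi(\mathbb P^1)=2$. For $\beta=-2/3$ one has $\beta+1-1/(\beta+1)=-8/3$, and for $\beta=-6/7$ one has $\beta+1-1/(\beta+1)=-48/7$. For $\boldsymbol\beta$ this gives
\[
 \zeta(0)=-\tfrac{2}{63}+\tfrac1{12}\Bigl(\tfrac{16}{3}+\tfrac{48}{7}\Bigr)-1=-\tfrac{2}{63}+\tfrac{64}{63}-1=-\tfrac1{63}.
\]
For $\boldsymbol\gamma$ it gives
\[
 \zeta(0)=-\tfrac{2}{21}+\tfrac{1}{12}\cdot\tfrac{144}{7}-1=\tfrac{13}{21}.
\]

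\emph{Step 3: scaling.} Substitute these values into $\log\det\Delta_{cm}=\log\det\Delta_m-\zeta_m(0)\log c$. For $S_{3,3,7}$ this produces $+\frac1{63}\log\frac{8\pi}{21}$, and for $S_{7,7,7}$ it produces $-\frac{13}{21}\log\frac{8\pi}{7}$. These are exactly \eqref{eq:klein-S337-explicit} and \eqref{eq:klein-S777-explicit}. The closed forms of $\det\Delta_{\boldsymbol\beta}$ and $\det\Delta_{\boldsymbol\gamma}$ are then taken from \cite[Corollary~1.3]{KalvinCV}.

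The only delicate point is the arithmetic in Step 2, together with the sign convention in \eqref{eq:determinant-scaling}; the rest is bookkeeping.
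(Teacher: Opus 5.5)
Your proposal is correct and follows the same route as the paper: compute $\zeta(0)=-1/63$ and $\zeta(0)=13/21$ from \eqref{eq:zeta-zero}, then apply the scaling law \eqref{eq:determinant-scaling} with the factors $8\pi/21$ and $8\pi/7$, which are the areas of the curvature $-1$ metrics. Your extra Gauss--Bonnet step and your appeal to uniqueness of the constant-curvature metric make explicit what the paper leaves implicit, namely that the quotient metrics are exactly these rescaled unit-area three-cone metrics.
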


\begin{proof}
Formula~\eqref{eq:zeta-zero} gives
$$
 \zeta_{S_{3,3,7}}(0)=-\frac1{63},
 \qquad
 \zeta_{S_{7,7,7}}(0)=\frac{13}{21}.
$$
The corresponding curvature $-1$ metrics have areas $8\pi/21$ and $8\pi/7$. The scaling law
\eqref{eq:determinant-scaling}, applied with these two factors, yields the stated identities.
\end{proof}

\subsection{The determinant of the Klein quartic}

All three quotient determinants have now been evaluated explicitly. We can therefore reconstruct the
determinant of the Klein quartic.

\begin{theorem}\label{thm:klein-final}
For the smooth metric of Gaussian curvature $-1$ on the Klein quartic, the spectral determinant
has the following closed explicit form:
\be\label{eq:klein-final-expanded}
 \det\Delta_K=
 \frac{7\,3^{8/7}}{2^8\pi^9}\,
 \mathcal G_K
 \exp\left\{
 -1+\frac{62}{3}\zeta'_R(-1)+\mathcal Z_K
 \right\}.
\ee
Here $\mathcal G_K$ is the Gamma factor
$$
 \mathcal G_K=
 \Gamma^6\left(\frac17\right)
 \frac{
 \left[\Gamma(2/7)\Gamma(4/7)\right]^{54/7}}
 {\left[\Gamma(3/7)\Gamma(5/7)\right]^{12/7}}
\times
 \left[
 \frac{\Gamma(5/21)\Gamma(19/21)}
 {\Gamma(2/21)\Gamma(16/21)}
 \right]^{12/7}
$$
and $\mathcal Z_K$ is the Hurwitz zeta term
$$
 \ba
 \mathcal Z_K={}&
 12\left[
 \zeta'_H\left(-1,\frac13\right)
 +\zeta'_H\left(-1,\frac23\right)\right]
 +7\left[
 \zeta'_H\left(-1,\frac17\right)
 +\zeta'_H\left(-1,\frac67\right)\right]\\
&-13\left[
 \zeta'_H\left(-1,\frac37\right)
 +\zeta'_H\left(-1,\frac47\right)\right]
 -\frac13\left[
 \zeta'_H\left(-1,\frac27\right)
 +\zeta'_H\left(-1,\frac57\right)\right]\\
&-6\sum_{r\in \{2,5,16,19\}}\zeta'_H\left(-1,\frac r{21}\right).
 \ea
$$
\end{theorem}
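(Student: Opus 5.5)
Taking logarithms in the reduction \eqref{eq:klein-determinant} gives
$$
 \log\det\Delta_K=3\log\det\Delta_{T_K}+\log\det\Delta_{S_{7,7,7}}-3\log\det\Delta_{S_{3,3,7}}.
$$
I would substitute \eqref{eq:klein-torus-determinant} for the torus factor and \eqref{eq:klein-S337-explicit}--\eqref{eq:klein-S777-explicit} for the two spherical factors. This leaves three kinds of non-elementary input, each with a closed form known from \cite{KalvinCV}:
\begin{itemize}
\item the unit-area three-cone determinants $\det\Delta_{\boldsymbol\beta}$ and $\det\Delta_{\boldsymbol\gamma}$, from \cite[Corollary~1.3]{KalvinCV};
\item the Liouville action $\mathcal S_{\boldsymbol\beta}[\varphi]$ for the $(3,3,7)$ divisor, which enters with coefficient $-\tfrac{7}{4\pi}$ after multiplying by $3$, from \cite[Theorem~1.2, Remark~3.5]{KalvinCV};
\item the regular part $\Psi(-\tfrac23,-\tfrac23,-\tfrac67)$ from \eqref{eq:Psi-definition}, together with the local constants $C(\beta)$ from \eqref{eq:conical-constant}.
\end{itemize}
I would then insert these closed forms term by term.

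Before simplifying the Gamma factors, I would check the cancellation of the Barnes-type local terms, exactly as in the proof of Theorem~\ref{thm:bolza-final}. The torus contributes $-6C(-\tfrac23)$. The three-cone determinant formula of \cite{KalvinCV} contains local contributions $C(-\tfrac23)$ (twice) and $C(-\tfrac67)$ inside $\det\Delta_{\boldsymbol\beta}$, and $3C(-\tfrac67)$ inside $\det\Delta_{\boldsymbol\gamma}$. With multiplicities $-3$ and $+1$ these combine to
$$
 -6C\left(-\tfrac23\right)-3C\left(-\tfrac67\right)+3C\left(-\tfrac67\right)+6C\left(-\tfrac23\right)=0,
$$
taking into account that $\log\det\Delta_{S_{3,3,7}}$ enters with a minus sign. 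This structural cancellation is the consistency check that the smooth surface $K$ carries no cone terms. What remains of the Barnes functions are the values $\zeta'_B(0;1/m,1,1)$ for $m=3,7$. I would rewrite these through the standard identity that expresses $\zeta'_B(0;1/m,1,1)$ as a combination of $\zeta'_R(-1)$, the values $\zeta'_H(-1,k/m)$, and $\log\Gamma(k/m)$. This identity, together with the Hurwitz terms already present in the Liouville action, produces $\mathcal Z_K$ and the coefficient $\tfrac{62}{3}$ of $\zeta'_R(-1)$.

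The remaining bookkeeping concerns the Gamma functions at arguments $r/21$, $k/7$, $1/3$ and $2/3$ coming from $\Psi$ and from the Liouville action. Their half-integer arguments $1+|\boldsymbol\beta|/2-\beta_j$ have denominators $21$, since $|\boldsymbol\beta|=-46/21$. I would reduce everything to the representatives appearing in $\mathcal G_K$ using the reflection formula, the multiplication formula, and the shift identities for $\Gamma$ and $\zeta_H(-1,\cdot)$. I would also use the Chowla--Selberg product $\Gamma(1/7)\Gamma(2/7)\Gamma(4/7)$ from the torus reference determinant.

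The elementary constants would be collected separately:
\begin{itemize}
\item powers of $2,3,7,\pi$ from \eqref{eq:klein-torus-determinant}, from the rescalings $\tfrac1{63}\log\tfrac{8\pi}{21}$ and $-\tfrac{13}{21}\log\tfrac{8\pi}{7}$, and from the Liouville action;
\item rational constants, namely $3\cdot\tfrac29$ together with the constants in the \cite{KalvinCV} formulas, which should total $-1$.
\end{itemize}

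The main obstacle is the Gamma-function simplification at denominator $21$. I would handle it by writing every $\log\Gamma$ term with argument in $(0,1)$ and pairing $x$ with $1-x$ via reflection; the sines so produced are then evaluated algebraically. The surviving factor $3^{8/7}$, rather than an algebraic factor such as the $(1+\sqrt2)^{1/4}$ in Theorem~\ref{thm:bolza-final}, suggests that the sine products here collapse to rational powers of $3$ and $7$.

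To guard against arithmetic slips, I would check the result numerically against the independent $V_4$-reduction sketched in the appendix. If the paper supplies it, I would also compare with a Selberg trace formula value.
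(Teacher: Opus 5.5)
Your plan matches the paper's proof: take logarithms in \eqref{eq:klein-determinant}, substitute \eqref{eq:klein-torus-determinant}, \eqref{eq:klein-S337-explicit} and \eqref{eq:klein-S777-explicit}, observe that all $C(\beta)$ terms cancel, and simplify with the shift and reflection identities for $\Gamma$ and $\zeta_H(-1,\cdot)$; your count $-6C(-\tfrac23)-3C(-\tfrac67)+3C(-\tfrac67)+6C(-\tfrac23)=0$ is exactly right. One small inconsistency: the values $\zeta'_B(0;1/m,1,1)$ are precisely the Barnes terms inside $C(1/m-1)$, so the cancellation you just verified removes them and there is nothing left to convert; the Hurwitz values in $\mathcal Z_K$ and the coefficient of $\zeta'_R(-1)$ must instead come from the non-local parts of the \cite{KalvinCV} formulas, chiefly the explicit Liouville actions and the round reference determinant.
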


\begin{proof}
Substituting \eqref{eq:klein-torus-determinant}, \eqref{eq:klein-S337-explicit}, and
\eqref{eq:klein-S777-explicit} into the logarithmic form of \eqref{eq:klein-determinant} gives
\eqref{eq:klein-final-expanded}. Here we use the shift and reflection identities for the Hurwitz
zeta and gamma functions. In this calculation all universal local terms $C(\beta)$ cancel. As in
the Bolza case, the cancellation is structural: the cone points occur only in the quotient problems,
whereas the reconstructed Klein metric is smooth.
\end{proof}

\section{Selberg zeta values}


\begin{corollary}\label{cor:selberg-special-values}
Let $Z_{\mathrm{Sel},B}(s)$ and $Z_{\mathrm{Sel},K}(s)$ denote the Selberg zeta functions of the
Bolza surface and the Klein quartic, respectively, in the normalization of
\cite{DHokerPhong,SarnakDeterminants}. Then
\begin{align}
 Z'_{\mathrm{Sel},B}(1)
 &=
 \frac{3^{17/24}(1+\sqrt2)^{1/4}}
 {2^{173/36}\pi^6}\,
 \mathcal G_B
 \exp\left\{
 \frac{56}{3}\zeta'_R(-1)+\mathcal Z_B
 \right\},
 \label{eq:bolza-selberg-value}\\
 Z'_{\mathrm{Sel},K}(1)
 &=
 \frac{7\,3^{8/7}}
 {2^{10}\pi^{11}}\,
 \mathcal G_K
 \exp\left\{
 \frac{38}{3}\zeta'_R(-1)+\mathcal Z_K
 \right\}.
 \label{eq:klein-selberg-value}
\end{align}
Thus we obtain closed explicit formulas for the leading coefficients at $s=1$ of the Selberg zeta
functions of these two arithmetic hyperbolic surfaces.
\end{corollary}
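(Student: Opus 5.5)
The plan is to combine Theorems~\ref{thm:bolza-final} and~\ref{thm:klein-final} with the classical determinant--Selberg zeta identity of \cite{DHokerPhong,SarnakDeterminants}. For a compact hyperbolic surface $M$ of genus $g\geq2$ with Gaussian curvature $-1$, that identity reads
$$
 \det\Delta_M
 =Z'_{\mathrm{Sel},M}(1)\,
 \exp\Bigl\{(2g-2)\Bigl(2\zeta'_R(-1)-\frac14+\frac12\log(2\pi)\Bigr)\Bigr\}.
$$
Our determinant is the one defined in \eqref{ZetaDet}, with the zero eigenvalue omitted and no area normalization, and the metrics are exactly the curvature $-1$ metrics of Theorems~\ref{thm:bolza-final} and~\ref{thm:klein-final}. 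So the identity applies directly, with no rescaling through \eqref{eq:determinant-scaling}.

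The first step is to fix conventions. I would confirm that the normalization of $Z_{\mathrm{Sel}}$ in \cite{DHokerPhong,SarnakDeterminants} is $Z(s)=\prod_{\gamma}\prod_{k\geq0}(1-e^{-(s+k)\ell(\gamma)})$, with a simple zero at $s=1$ coming from the eigenvalue $0$. I would also confirm that the genus-dependent constant is as displayed. This is the only genuine risk: an error in the constant would shift the powers of $2$, $\pi$, $e$, and the $\zeta'_R(-1)$ coefficient.

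The second step is to solve for $Z'_{\mathrm{Sel},M}(1)$ and substitute the closed forms.

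\textbf{Bolza surface.} Here $g=2$, so the factor is $e^{4\zeta'_R(-1)}e^{-1/2}(2\pi)$. Dividing \eqref{eq:bolza-final-expanded} by it:
\begin{itemize}
\item the $e^{-1/2}$ in \eqref{eq:bolza-final-expanded} cancels against the $e^{-1/2}$ of the factor;
\item the coefficient of $\zeta'_R(-1)$ becomes $68/3-4=56/3$;
\item the prefactor acquires an extra $1/(2\pi)$, turning $2^{137/36}\pi^5$ into $2^{173/36}\pi^6$.
\end{itemize}
The factors $\mathcal G_B$ and $\mathcal Z_B$ pass through unchanged. This yields \eqref{eq:bolza-selberg-value}.

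\textbf{Klein quartic.} Here $g=3$, so the factor is $e^{8\zeta'_R(-1)}e^{-1}(2\pi)^2$. Dividing \eqref{eq:klein-final-expanded} by it:
\begin{itemize}
\item the $e^{-1}$ cancels;
\item the coefficient of $\zeta'_R(-1)$ becomes $62/3-8=38/3$;
\item $2^8\pi^9$ becomes $2^{10}\pi^{11}$.
\end{itemize}
This yields \eqref{eq:klein-selberg-value}.

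Finally, since $Z_{\mathrm{Sel}}$ has a simple zero at $s=1$, the value $Z'_{\mathrm{Sel}}(1)$ is indeed the leading Taylor coefficient there, which gives the concluding sentence of the statement. As a sanity check, I would numerically compare the result with the value of $\det\Delta_B$ from the Remark after Theorem~\ref{thm:bolza-final}, recomputing the exponential constant.
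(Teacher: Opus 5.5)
Your proposal is correct and is essentially the paper's proof. Your constant $\exp\bigl\{(2g-2)\bigl(2\zeta'_R(-1)-\tfrac14+\tfrac12\log(2\pi)\bigr)\bigr\}$ is the paper's $\mathfrak c^{\,2g-2}$ with $\mathfrak c=\sqrt{2\pi}\,\exp\{2\zeta'_R(-1)-\tfrac14\}$, and your bookkeeping ($68/3-4=56/3$, $62/3-8=38/3$, together with the extra factors $2\pi$ and $(2\pi)^2$) reproduces both stated formulas.
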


\begin{proof}
For a smooth compact hyperbolic surface $M$ of genus $g$, the determinant--Selberg-zeta identity
is
$$
 \det\Delta_M
 =
 \mathfrak c^{\,2g-2}Z'_{\mathrm{Sel},M}(1),
 \qquad
 \mathfrak c=
 \sqrt{2\pi}\,
 \exp\left\{2\zeta'_R(-1)-\frac14\right\}.
$$
Applying this identity with $g=2$ and $g=3$ to \eqref{eq:bolza-final-expanded} and
\eqref{eq:klein-final-expanded}, respectively, yields the stated formulas.
\end{proof}

\begin{remark}
The virtual permutation-representation relations used above also give identities of the full Selberg
zeta functions. Writing $Z_{\mathrm{Sel},Y}(s)$ for the Selberg zeta function of a hyperbolic
quotient orbifold $Y$, the Artin factorization formalism of \cite{VenkovZograf} gives
\begin{align}
 Z_{\mathrm{Sel},B}(s)\,
 Z_{\mathrm{Sel},S_V}(s)^2
 &=
 Z_{\mathrm{Sel},T_1}(s)\,
 Z_{\mathrm{Sel},T_2}(s)\,
 Z_{\mathrm{Sel},S_h}(s),
 \label{eq:bolza-selberg-factorization}\\
 Z_{\mathrm{Sel},K}(s)\,
 Z_{\mathrm{Sel},S_{3,3,7}}(s)^3
 &=
 Z_{\mathrm{Sel},T_K}(s)^3\,
 Z_{\mathrm{Sel},S_{7,7,7}}(s).
 \label{eq:klein-selberg-factorization}
\end{align}
The factorization identities themselves are classical consequences of the virtual representation
relations. Here we find their leading coefficients at $s=1$ for the Bolza surface and the Klein
quartic in closed explicit form.
\end{remark}

\section{Determinant identities on equisymmetric strata}

The representation-theoretic reduction is not confined to the two surfaces considered in Sections~3
and~4. A permutation-representation identity persists under every deformation that preserves the
topological type of the group action. It therefore yields an identity of determinant functions on
the corresponding equisymmetric Teichm\"uller space.

\begin{proposition}\label{prop:equisymmetric-family}
Let $\mathcal T$ be a connected equisymmetric Teichm\"uller space of closed hyperbolic surfaces
$X_t$ of genus $g\geq2$, equipped with a marked action of a finite group $G$. Suppose that
$$
 \sum_H a_H\,\Ind_H^G\mathbf1=0
$$
as a virtual representation of $G$. Then
\be\label{eq:equisymmetric-determinant}
 \prod_H
 \bigl(\det\Delta_{X_t/H}\bigr)^{a_H}=1,
\ee
and consequently
\be\label{eq:equisymmetric-variation}
 \sum_H a_H\,d\log\det\Delta_{X_t/H}=0
\ee
on $\mathcal T$.
\end{proposition}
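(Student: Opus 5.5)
The plan is to apply Lemma~\ref{lem:artin} fibrewise and then differentiate. Fix a point $t\in\mathcal T$. The marked action identifies $G$ with a group of isometries of the hyperbolic surface $X_t$. Because the hyperbolic metric is the unique curvature $-1$ metric in its conformal class, every conformal automorphism is an isometry. For each subgroup $H$, the quotient $X_t/H$ is a compact hyperbolic orbifold, and $\Delta_{X_t/H}$ is unitarily equivalent to the restriction of $\Delta_{X_t}$ to $L^2(X_t)^H$, as explained at the start of Section~2.

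The virtual relation $\sum_H a_H\Ind_H^G\mathbf 1=0$ concerns only the abstract group $G$ and its subgroups. It therefore holds at every $t$, provided the subgroups $H$ are transported along $\mathcal T$ through the marking. This transport is the one point to make explicit: it is what "equisymmetric" supplies, namely that the topological type of the action, and hence the identification of each $H$ inside $\mathrm{Isom}^+(X_t)$, is constant on $\mathcal T$. With this in place, Lemma~\ref{lem:artin} applied on each fibre gives \eqref{eq:equisymmetric-determinant} pointwise on $\mathcal T$.

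For \eqref{eq:equisymmetric-variation}, I would take logarithms to obtain
\[
\sum_H a_H\log\det\Delta_{X_t/H}=0
\]
identically in $t$, and then apply $d$. This requires each function $t\mapsto\log\det\Delta_{X_t/H}$ to be differentiable on $\mathcal T$.

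The main obstacle is this differentiability. I would obtain it by lifting to the smooth surface, since the quotient $X_t/H$ is singular. Its spectral zeta function is the trace of the zeta function of $\Delta_{X_t}$ compressed by the projection
\[
P_H=|H|^{-1}\sum_{h\in H}h^*.
\]
This can be written as $\zeta_{X_t/H}(s)=|H|^{-1}\sum_{h\in H}\operatorname{Tr}\bigl(h^*\Delta_{X_t}^{-s}\bigr)$ on the complement of constants. Each equivariant trace $\operatorname{Tr}(h^*\Delta^{-s})$ has a meromorphic continuation, regular at $s=0$, from the equivariant heat-trace expansion with fixed-point contributions. Its derivative at $0$ depends smoothly (indeed real-analytically) on $t$. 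The reason is that $X_t$ can be realized as a fixed surface with a fixed $G$-action and a smoothly varying $G$-invariant hyperbolic metric, a realization the marking makes possible. The equivariant heat kernels then vary smoothly, with uniform remainder estimates.

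Alternatively, the identity $\sum_Ha_H\zeta_{X_t/H}(s)=0$ holds for all $t$ and all $s$ at once. So if only some of the terms are known to be differentiable, the remaining term inherits differentiability from the relation itself. Once differentiability is established, differentiating the constant function $0$ gives \eqref{eq:equisymmetric-variation}.
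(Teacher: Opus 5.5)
Your proposal is correct and follows the paper's proof: apply Lemma~\ref{lem:artin} to each fibre $X_t$ to obtain \eqref{eq:equisymmetric-determinant}, then take the differential to obtain \eqref{eq:equisymmetric-variation}. The paper's proof is exactly these two lines and takes the differentiability of each $t\mapsto\log\det\Delta_{X_t/H}$ for granted. Your equivariant heat-trace argument via $P_H=|H|^{-1}\sum_{h\in H}h^*$ therefore supplies a justification the paper leaves implicit; it is not a different route.
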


\begin{proof}
Applying Lemma~\ref{lem:artin} to each surface $X_t$, we obtain
\eqref{eq:equisymmetric-determinant}. Its differential is \eqref{eq:equisymmetric-variation}.
\end{proof}

\subsection{The stratum through the Bolza surface}

Consider the equisymmetric component of genus-two surfaces carrying the $V_4$-action used in
Section~3. For a surface $B_t$ in this component, write
$$
 T_{1,t}=B_t/\langle\sigma\rangle,\qquad
 T_{2,t}=B_t/\langle h\sigma\rangle,\qquad
 S_{h,t}=B_t/\langle h\rangle,\qquad
 S_{V,t}=B_t/V_4.
$$
The quotient $S_{V,t}$ is a sphere with five cone points of angle $\pi$. The equisymmetric
component is therefore locally parametrized by the positions of five marked points on the sphere and
has complex dimension $2$.

\begin{corollary}\label{cor:bolza-stratum}
On this component,
\be\label{eq:bolza-stratum}
 \det\Delta_{B_t}
 =
 \frac{
 \det\Delta_{T_{1,t}}\,
 \det\Delta_{T_{2,t}}\,
 \det\Delta_{S_{h,t}}
 }{
 \bigl(\det\Delta_{S_{V,t}}\bigr)^2
 }.
\ee
In particular,
\be\label{eq:bolza-stratum-variation}
 \begin{split}
 d\log\det\Delta_{B_t}
 ={}&d\log\det\Delta_{T_{1,t}}
 +d\log\det\Delta_{T_{2,t}}
 +d\log\det\Delta_{S_{h,t}}\\
 &-2d\log\det\Delta_{S_{V,t}}.
 \end{split}
\ee
\end{corollary}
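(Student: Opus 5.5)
The plan is to apply Proposition~\ref{prop:equisymmetric-family} with $G=V_4=\{1,h,\sigma,h\sigma\}$ and the virtual relation obtained from \eqref{eq:v4-relation}. Writing $\Reg_V=\Ind_{\{1\}}^{V}\mathbf 1$ and $\mathbf 1_V=\Ind_V^V\mathbf 1$, identity \eqref{eq:v4-relation} becomes
$$
 \Ind_{\langle\sigma\rangle}^{V}\mathbf 1+\Ind_{\langle h\sigma\rangle}^{V}\mathbf 1+\Ind_{\langle h\rangle}^{V}\mathbf 1-\Ind_{\{1\}}^{V}\mathbf 1-2\,\Ind_{V}^{V}\mathbf 1=0 .
$$
This is a statement about the abstract group $V$ only. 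It was verified by the character computation in the proof of Proposition~\ref{prop:bolza-determinant}, which does not depend on the surface.

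First I will check that the hypotheses of Proposition~\ref{prop:equisymmetric-family} hold on the component. Along the equisymmetric component the marked $V_4$-action is carried along continuously, so each $B_t$ carries an isometric action of $V$ for its hyperbolic metric. The quotients $B_t/H$ are then Riemannian orbifolds with Friedrichs Laplacians, as required in Lemma~\ref{lem:artin}. The subgroups $\langle\sigma\rangle$, $\langle h\sigma\rangle$, $\langle h\rangle$, $\{1\}$ and $V$ give the quotients $T_{1,t}$, $T_{2,t}$, $S_{h,t}$, $B_t$ and $S_{V,t}$, with coefficients $a_H=1,1,1,-1,-2$.

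Substituting these coefficients into \eqref{eq:equisymmetric-determinant} gives $\det\Delta_{T_{1,t}}\det\Delta_{T_{2,t}}\det\Delta_{S_{h,t}}\,(\det\Delta_{B_t})^{-1}(\det\Delta_{S_{V,t}})^{-2}=1$, which rearranges to \eqref{eq:bolza-stratum}. Taking logarithms and differentiating gives \eqref{eq:bolza-stratum-variation}, and this is exactly \eqref{eq:equisymmetric-variation}.

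The only point needing care is that $d\log\det$ is meaningful, i.e.\ that each determinant is a differentiable function of the parameter. I plan to handle this as follows. The identity \eqref{eq:bolza-stratum} holds pointwise. The left side, $\det\Delta_{B_t}$, is smooth in $t$ because the family of smooth hyperbolic metrics varies real-analytically on Teichm\"uller space. The quotient determinants are smooth in $t$ by the anomaly formula \eqref{eq:conical-anomaly}, since the cone orders stay fixed and the cone points move smoothly. Therefore both sides of the pointwise identity are differentiable, and the differential identity \eqref{eq:bolza-stratum-variation} follows. Note that the isometry $T_{1}\simeq T_{2}$ used at the Bolza point need not persist along the stratum, so I keep the two torus factors separate rather than combining them as in \eqref{eq:bolza-determinant-symmetric}.
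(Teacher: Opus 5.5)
Your proposal is correct and is essentially the paper's proof: apply Proposition~\ref{prop:equisymmetric-family} to the $V_4$-identity \eqref{eq:v4-relation}, with coefficients $1,1,1,-1,-2$ for $\langle\sigma\rangle,\langle h\sigma\rangle,\langle h\rangle,\{1\},V$, and then differentiate. Your differentiability remark goes beyond the paper, which takes this for granted; however, the anomaly formula alone does not give smoothness in $t$, because the conformal structures of the quotients also move with $t$, so you would additionally need the flat reference determinants and the Liouville data to vary smoothly, or you could treat each quotient determinant as the determinant of $\Delta_{B_t}$ restricted to $H$-invariant functions for a smooth family of $V$-invariant metrics on a fixed surface.
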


\begin{proof}
Apply Proposition~\ref{prop:equisymmetric-family} to the representation identity
\eqref{eq:v4-relation}. Equation~\eqref{eq:bolza-stratum-variation} is the differential of
\eqref{eq:bolza-stratum}.
\end{proof}

At the Bolza point the two elliptic quotients are isometric. Away from that point they need not be
isometric, but the determinant identity \eqref{eq:bolza-stratum} remains unchanged.

\subsection{The common \texorpdfstring{$D_8$}{D8}-locus}

The Bolza surface and the quasiplatonic curve
$$
 C_6:\qquad y^2=x^6-1
$$
lie on a one-dimensional sublocus of the $V_4$-component on which the symmetry group contains a
dihedral group $D_8$ of order $8$. The orbifold structure and real forms of this equisymmetric
locus were described in \cite{CostaRiera}, where the group of order $8$ is denoted by $D_4$. The
standard normal form for this locus \cite[Sections~2 and~3]{ShaskaVoelklein} is
\be\label{eq:d8-genus-two-family}
 X_t:\qquad
 y^2=x^6-tx^4+tx^2-1
 =(x^2-1)\bigl(x^4+(1-t)x^2+1\bigr),
 \qquad t\in\mathbb C\setminus\{-1,3\}.
\ee

\begin{proposition}\label{prop:d8-genus-two-locus}
The family \eqref{eq:d8-genus-two-family} joins the Bolza surface to the equianharmonic curve:
$$
 X_{-5}\simeq B,\qquad X_0=C_6.
$$
The $V_4$-action used in \eqref{eq:bolza-stratum} persists throughout the family, and its two
elliptic quotients are isometric. Their common $j$-invariant is
\be\label{eq:d8-elliptic-j}
 j_T(t)=\frac{256t^3}{t+1}.
\ee

The $D_8$-quotient $Z_t=X_t/D_8$ is the sphere with coordinate
\be\label{eq:d8-full-quotient-map}
 z=\frac{(x^2+1)^2}{4x^2}.
\ee
Its four cone points are
$$
 z=0,\quad z=1,\quad
 z=a(t):=\frac{t+1}{4},\quad z=\infty,
$$
with orbifold orders $2,4,2,2$, respectively.
\end{proposition}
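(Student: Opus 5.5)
The plan is to work directly with the explicit automorphisms of the normal form \eqref{eq:d8-genus-two-family} and to verify each assertion by elementary algebra. Write $f_t(x)=x^6-tx^4+tx^2-1$. Since $f_t$ is even and $x^6f_t(1/x)=-f_t(x)$, the curve $X_t$ carries the following maps:
$$
 h:(x,y)\mapsto(x,-y),\qquad
 \sigma:(x,y)\mapsto(-x,y),\qquad
 \rho:(x,y)\mapsto\Bigl(\frac1x,\frac{iy}{x^3}\Bigr).
$$
One checks that $\rho^2=h$, so $\rho$ has order four, and that $\rho\sigma\rho^{-1}=h\sigma$ (up to the obvious sign bookkeeping). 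Hence $\langle\rho,\sigma\rangle\cong D_8$ contains the Klein four-group $V=\{1,h,\sigma,h\sigma\}$ for every $t\neq-1,3$. These two values are exactly where $f_t$ acquires a repeated root, since $x^4+(1-t)x^2+1$ then has a double root or shares the root $\pm1$ with $x^2-1$. Because the family is connected and the action is given by the same formulas throughout, its topological type is constant. This gives the persistence of the $V_4$-action used in \eqref{eq:bolza-stratum}. The relation $\rho\sigma\rho^{-1}=h\sigma$ shows that $\rho$ descends to an isometry $X_t/\langle\sigma\rangle\to X_t/\langle h\sigma\rangle$ of the quotient hyperbolic metrics, so the two elliptic quotients are isometric.

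For the $j$-invariant I will compute the quotient by $\sigma$ explicitly. Setting $u=x^2$ gives $y^2=u^3-tu^2+tu-1=(u-1)\bigl(u^2+(1-t)u+1\bigr)$. I then translate to Weierstrass form, or use the Legendre form with the three roots $1$ and $\frac{t-1\pm\sqrt{(t+1)(t-3)}}{2}$, and simplify $j$ to $256t^3/(t+1)$ as in \eqref{eq:d8-elliptic-j}. This is routine symbolic algebra. It also gives a consistency check: $j_T(0)=0$ is equianharmonic, as it must be for $C_6$, and $j_T(-5)=8000=20^3$ is the CM discriminant $-8$ value, matching $\tau_B=i\sqrt2$.

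The two special fibres are handled next. For $X_0$ the equation is literally $y^2=x^6-1$, which is $C_6$. For $X_{-5}$ we have $f_{-5}=(x^2-1)(x^4+6x^2+1)$, whose roots are $\pm1$ and $\pm i(\sqrt2\pm1)$. I will exhibit a Möbius transformation carrying these six points to $\{0,\infty,\pm1,\pm i\}$, namely the branch points of $y^2=x(x^4-1)$. My first attempt will be $x\mapsto(x-1)/(x+1)$ followed by a scaling. If that fails, the fallback is to compare Igusa--Clebsch invariants, since the Bolza curve is the unique genus-two curve with automorphism group of order $48$. Finding this transformation explicitly is the only step I expect to need some search, and it is the main potential obstacle.

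Finally, I will treat the $D_8$-quotient $Z_t$. The function $z=(x^2+1)^2/(4x^2)$ has degree four in $x$ and is invariant under $x\mapsto-x$ and $x\mapsto1/x$. Together with passing to the $h$-quotient, this identifies it as the coordinate on $X_t/D_8$. The cone points come from the fixed points of the elements of $D_8$:
\begin{itemize}
\item $z=1$ corresponds to $x=\pm1$, where $y=0$; these Weierstrass points are fixed by $\rho$, so the stabilizer is $C_4$ and the order is $4$.
\item $z=\infty$ corresponds to $x\in\{0,\infty\}$, which are fixed by $\sigma$; the order is $2$.
\item $z=0$ corresponds to $x=\pm i$, which are fixed by an involution $\rho\sigma$-type; the order is $2$.
\item $z=a(t)=(t+1)/4$ is the image of the roots of $x^4+(1-t)x^2+1$, because $(x^2+1)^2=(t+1)x^2$ there; these are Weierstrass points fixed only by $h$, so the order is $2$.
\end{itemize}
To close the argument, the orbifold Riemann--Hurwitz formula $2=8\bigl(-2+3\cdot\frac12+\frac34\bigr)$ confirms that no further cone points occur.
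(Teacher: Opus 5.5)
Your proposal is correct and follows the paper's proof essentially step by step: the same generators $h,\sigma,\rho$ with $\rho^2=h$ and $\rho\sigma\rho^{-1}=h\sigma$ (this holds exactly, with no sign adjustment needed), the quotient $u=x^2$ for the $j$-invariant, and the stabilizer analysis of the fibres of $z$, with your orbifold Riemann--Hurwitz count as a useful extra check. Your first attempt at the M\"obius map already works, since the paper uses exactly $x\mapsto e^{-\pi i/4}(x-1)/(x+1)$, so the obstacle you anticipate does not arise; one small correction is that over $z=\infty$ the points above $x=0$ are fixed by $\sigma$, whereas those above $x=\infty$ are fixed by $h\sigma$, which does not change the order $2$.
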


\begin{proof}
The hyperelliptic involution
$$
 h(x,y)=(x,-y)
$$
and the involution
$$
 \sigma(x,y)=(-x,y)
$$
act on $X_t$. The curve also admits
$$
 \rho(x,y)=
 \left(\frac1x,\frac{iy}{x^3}\right).
$$
Indeed,
$$
 x^6\left(
 x^{-6}-tx^{-4}+tx^{-2}-1\right)
 =-\left(x^6-tx^4+tx^2-1\right).
$$
Moreover,
$$
 \rho^2=h,\qquad \sigma^2=1,\qquad
 \sigma\rho\sigma=\rho^{-1}.
$$
Thus $\rho$ and $\sigma$ generate $D_8$, and $\{1,h,\sigma,h\sigma\}$ is the distinguished
$V_4$. The right hand side of \eqref{eq:d8-genus-two-family} has multiple roots precisely for
$t=-1$ or $t=3$.

For $t=0$, equation \eqref{eq:d8-genus-two-family} is the equation of $C_6$. For $t=-5$, its
branch points are
$$
 \{\pm1,\ \pm i(\sqrt2-1),\
          \pm i(\sqrt2+1)\}.
$$
The M\"obius transformation
$$
 x\longmapsto
 e^{-\pi i/4}\frac{x-1}{x+1}
$$
maps this set onto $\{0,\infty,\pm1,\pm i\}$, the branch set of $y^2=x(x^4-1)$. Hence
$X_{-5}\simeq B$.

The involutions $\sigma$ and $h\sigma$ are conjugate by $\rho$, so their singular elliptic
quotients are isometric. With $u=x^2$, one of the underlying elliptic curves is
$$
 y^2=u^3-tu^2+tu-1.
$$
A direct computation gives \eqref{eq:d8-elliptic-j}. In particular,
$$
 j_T(-5)=8000,\qquad j_T(0)=0,
$$
in agreement with the two CM elliptic quotients used above.

The function in \eqref{eq:d8-full-quotient-map} is invariant under the reduced transformations
$x\mapsto-x$ and $x\mapsto1/x$. The points fixed by the reduced $V_4$-action map to
$0,1,\infty$. The two Weierstrass points $x=\pm1$ have stabilizer of order $4$ and map to
$z=1$. The remaining four Weierstrass points satisfy
$$
 x^2+x^{-2}=t-1
$$
and hence map to $z=(t+1)/4$; their stabilizer has order $2$. This proves the asserted
signature.
\end{proof}

Let $m_a$ be the curvature-$-1$ metric on $\mathbb P^1_z$ with cone angles
$\pi,\pi/2,\pi,\pi$ at $0,1,a,\infty$, respectively. The hyperbolic metrics on $X_t$ and on
all four intermediate quotients in \eqref{eq:bolza-stratum} are pullbacks of $m_{a(t)}$. At the
two distinguished points,
$$
 t=-5,\quad a=-1,
 \qquad\text{and}\qquad
 t=0,\quad a=\frac14.
$$
The additional automorphisms at these values produce compatible maps from the four-cone quotient to
the three-cone models of signatures $(0;2,3,8)$ and $(0;2,4,6)$, respectively. Thus
Proposition~\ref{prop:d8-genus-two-locus} identifies the common four-cone geometry behind the two
genus-two Belyi configurations.

For a general $t$, the determinant identity \eqref{eq:bolza-stratum} remains exact, and all
quotient metrics in this identity are pullbacks of the four-cone metric $m_{a(t)}$. Thus the
obstruction to an explicit evaluation lies entirely in the analytic data of this four-cone sphere,
not in the spectral reduction. As explained in \cite[Introduction and Section~2]{KalvinCV}, the
anomaly formula extends to an arbitrary number of cone points, whereas, starting with four points,
no closed explicit construction of the uniformizing metric, its regular parts, or its Liouville
action is known in general. In the present family these data depend on the single cross-ratio
$a=(t+1)/4$.

More precisely, the hyperbolic Liouville action generates both the accessory parameters and the
regular parts of the conformal factor: in the notation of \cite[Lemmas~4.4 and~4.5]{KalvinASNS},
$$
 -\frac{1}{2\pi}\,\partial_a S=h_a,
 \qquad
 \varphi_k=
 \frac12+\frac{1}{4\pi}\,\partial_{\beta_k}S.
$$
Consequently, an explicit four-point action $S(a;\boldsymbol\beta)$, or equivalently the
corresponding accessory and local data, would make all terms in the singular anomaly formula
explicit. The remaining flat elliptic determinant is determined by $j_T(t)$ through the Kronecker
limit formula. Together with the $V_4$-identity, this would determine $\det\Delta_{X_t}$ by
integrating the Polyakov one-form along the $D_8$-locus, with normalization at either $a=-1$ or
$a=1/4$. The Zamolodchikov--Zamolodchikov factorization of the four-point classical Liouville
action through classical conformal blocks provides a conjectural approach to this problem
\cite{ZamolodchikovZamolodchikov,HadaszJaskolski}; strong numerical evidence is available in the
parabolic four-point case \cite{HadaszJaskolskiPiatek}, but no corresponding rigorous formula is
presently known for the cone singularities occurring here.

\subsection{The stratum through the Klein quartic}

The full $PSL(2,7)$-action used in Section~4 is rigid and therefore does not produce a nontrivial
equisymmetric deformation family. We use instead a subgroup $V\simeq V_4$. Its spectral reduction
is also suitable for an explicit evaluation of $\det\Delta_K$, because the quotient orbifolds
admit Belyi maps to the $(2,3,7)$ sphere. Appendix~\ref{app:klein-v4-check} contains a sketch of
this independent calculation.

Denote the three subgroups of order two in $V\simeq V_4$ by $H_1,H_2,H_3$. The Hurwitz action on
the Klein quartic $K$ has $84$ points with stabilizer of order two, and $PSL(2,7)$ has $21$
involutions. Since the involutions are conjugate, each of them fixes four points. The fixed-point
sets of the three nontrivial elements of $V$ are disjoint, because a stabilizer of an
orientation-preserving action at a point is cyclic.

Riemann--Hurwitz gives
$$
 4=2\bigl(2g(K/H_j)-2\bigr)+4,
 \qquad
 4=4\bigl(2g(K/V)-2\bigr)+12.
$$
Thus the three quotients by $H_j$ have genus one and four cone points of angle $\pi$, whereas
the quotient by $V$ is a sphere with six such points. These branching data remain fixed on the
corresponding equisymmetric component. This component is locally parametrized by six marked points
on the sphere and has complex dimension $3$.

For a surface $K_t$ in this component, put
$$
 T_{j,t}=K_t/H_j,\qquad j=1,2,3,
 \qquad
 S_{K,t}=K_t/V.
$$

\begin{corollary}\label{cor:klein-stratum}
On the equisymmetric component through the Klein quartic,
\be\label{eq:klein-stratum}
 \det\Delta_{K_t}
 =
 \frac{
 \det\Delta_{T_{1,t}}\,
 \det\Delta_{T_{2,t}}\,
 \det\Delta_{T_{3,t}}
 }{
 \bigl(\det\Delta_{S_{K,t}}\bigr)^2
 },
\ee
and hence
\be\label{eq:klein-stratum-variation}
 d\log\det\Delta_{K_t}
 =
 \sum_{j=1}^3d\log\det\Delta_{T_{j,t}}
 -2d\log\det\Delta_{S_{K,t}}.
\ee
\end{corollary}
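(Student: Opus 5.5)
The identity \eqref{eq:klein-stratum} is an instance of Proposition~\ref{prop:equisymmetric-family}, applied to the elementary $V_4$-relation \eqref{eq:v4-relation}, now for the subgroup $V\simeq V_4$ acting on each surface $K_t$ of the component. The plan is to verify that the hypotheses of that proposition hold and to identify the quotient orbifolds with the $T_{j,t}$ and $S_{K,t}$.

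First, the representation identity. The relation
\[
 \Ind_{H_1}^{V}\mathbf 1+\Ind_{H_2}^{V}\mathbf 1+\Ind_{H_3}^{V}\mathbf 1=\Reg_V+2\,\mathbf 1_V
\]
is purely group-theoretic and was already checked in the proof of Proposition~\ref{prop:bolza-determinant} by comparing characters: both sides equal $6$ at the identity and $2$ at each nontrivial element. We rewrite it in the form $\sum_H a_H\Ind_H^V\mathbf 1=0$ with
\[
 a_{H_j}=1,\qquad a_{\{1\}}=-1,\qquad a_V=-2,
\]
using $\Ind_{\{1\}}^V\mathbf 1=\Reg_V$ and $\Ind_V^V\mathbf 1=\mathbf 1_V$.

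Second, the marked action. On the equisymmetric component the $V$-action on $K_t$ is a marked action of a fixed topological type. It acts isometrically for the hyperbolic metric of $K_t$, since conformal automorphisms are isometries of the uniformizing metric. The quotients are therefore $K_t/\{1\}=K_t$, $K_t/H_j=T_{j,t}$ and $K_t/V=S_{K,t}$.

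Applying Proposition~\ref{prop:equisymmetric-family}, or directly Lemma~\ref{lem:artin} fibrewise, gives
\[
 \det\Delta_{K_t}^{-1}\,\bigl(\det\Delta_{S_{K,t}}\bigr)^{-2}\prod_{j=1}^3\det\Delta_{T_{j,t}}=1,
\]
which is \eqref{eq:klein-stratum}. Taking the logarithmic differential on $\mathcal T$ then gives \eqref{eq:klein-stratum-variation}.

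The only point needing care is differentiability of $t\mapsto\log\det\Delta_{X_t/H}$ on the stratum, which is needed for the variation identity. For $K_t$ itself this is standard, as smooth hyperbolic metrics vary real-analytically with the moduli. For the orbifold quotients I would avoid proving smoothness separately. Because the hyperbolic orbifold metric on $X_t/H$ is the quotient of that of $X_t$, and $H$ is fixed, its spectrum is the $H$-invariant part of the spectrum of $X_t$. So \eqref{eq:klein-stratum-variation} is a formal identity among differentials once each factor is differentiable. Alternatively, one can read it as the statement that the one-form on the left equals the one on the right wherever either side is defined.

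With this, the proof reduces to the cited Proposition; I expect no substantive obstacle beyond this regularity remark.
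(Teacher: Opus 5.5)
Your proposal is correct and follows the paper's own proof: the paper likewise applies Proposition~\ref{prop:equisymmetric-family} to the relation \eqref{eq:v4-relation} for the marked $V$-action on each $K_t$, and then differentiates. Your regularity remark goes beyond what the paper states, with one correction: \eqref{eq:klein-stratum-variation} follows wherever the right-hand side is defined, i.e.\ wherever each quotient determinant is differentiable, not wherever ``either side'' is defined.
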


\begin{proof}
The representation identity \eqref{eq:v4-relation} applies to the marked action of $V$ on every
surface in the component. Proposition~\ref{prop:equisymmetric-family} yields
\eqref{eq:klein-stratum}; differentiation yields \eqref{eq:klein-stratum-variation}.
\end{proof}

\subsection{Quasiplatonic points in the two strata}

Here quasiplatonic is understood in its rigid sense: the quotient by the full conformal automorphism
group is a sphere with three cone points. This is substantially more restrictive than the mere
existence of a Belyi function. The latter holds at infinitely many algebraic points of either
stratum, whereas the quasiplatonic points in each fixed genus form a finite set. A Belyi map whose
ramification data do not reproduce the prescribed cone angles does not yield the quotient metric
required for the determinant calculation.

\begin{proposition}\label{prop:quasiplatonic-points}
The genus-two equisymmetric component considered above contains exactly two quasiplatonic points,
represented by
$$
 B:\ y^2=x^5-x,
 \qquad
 C_6:\ y^2=x^6-1.
$$
Their full automorphism quotients have signatures $(0;2,3,8)$ and $(0;2,4,6)$, respectively.

The genus-three component contains exactly five quasiplatonic points. In addition to the Klein and
Fermat quartics, put
$$
 \ba
 Q_{48}&:\quad
 x^4+y^4+z^4+(4\zeta_3+2)x^2y^2=0,
 \qquad \zeta_3=e^{2\pi i/3},\\
 H_{32}&:\quad y^2=x^8-1,\\
 H_{48}&:\quad y^2=x^8+14x^4+1.
 \ea
$$
The orders of the full conformal automorphism groups and the signatures of the corresponding
quotients are
$$
\begin{array}{ccc}
 \text{surface}
 &|\operatorname{Aut}|&\text{quotient signature}\\ \hline
 \text{Klein quartic}&168&(0;2,3,7)\\
 \text{Fermat quartic}&96&(0;2,3,8)\\
 Q_{48}&48&(0;2,3,12)\\
 H_{32}&32&(0;2,4,8)\\
 H_{48}&48&(0;2,4,6)
 \end{array}
$$
\end{proposition}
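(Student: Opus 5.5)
The plan reduces the statement to a finite group-theoretic classification. A point of the equisymmetric component is quasiplatonic exactly when its full automorphism group $A\supseteq V$ acts with a triangle signature $(0;p,q,r)$. For each component I would find all finite groups $A$ that satisfy three conditions. First, $A$ acts with a triangle signature in the given genus. Second, $A$ contains a subgroup $V\cong V_4$ whose restricted action has the prescribed topological type: signature $(0;2^5)$ with $h$ central in genus two, and signature $(0;2^6)$ with the fixed-point data above in genus three. Third, this $V$-subaction is the one marked on the component. Equivalently, I look for triangle groups $\Delta(p,q,r)$ containing a Fuchsian subgroup of signature $(0;2^5)$ or $(0;2^6)$ as a subgroup of finite index $|A|/4$.

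In genus two I would use the classical list of full automorphism groups of genus-two curves with triangle quotients. Up to isomorphism these are $y^2=x^5-x$ (order $48$, signature $(2,3,8)$), $y^2=x^6-1$ (order $24$, signature $(2,4,6)$) and $y^2=x^5-1$ (order $10$, signature $(2,5,10)$). The last has no $V_4$, since $10$ is not divisible by $4$. The first two lie on the component: $B=X_{-5}$ and $C_6=X_0$ by Proposition~\ref{prop:d8-genus-two-locus}. Their $V_4$-actions are the marked ones, because a genus-two $V_4$-action containing $h$ has a unique topological type, namely Riemann--Hurwitz with quotient $(0;2^5)$. This gives exactly two points, with the stated signatures.

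In genus three I would start from Broughton's classification of quasiplatonic actions in genus three (Bujalance--Conder--Costa and others). It gives the finite list of triangle-signature groups: orders $168,96,48,48,32,\dots$ together with others such as $C_{14}$, $C_{12}$, $C_9$, $C_8\times C_2$ and $C_4\times C_4$, with their curves. I would discard every group of order not divisible by $4$. For each remaining group I would decide whether it contains a $V_4$ acting with quotient signature $(0;2^6)$, that is, with all three involutions of $V$ having four fixed points each. I would check this by counting fixed points of involutions via the character formula $|\mathrm{Fix}(g)|=|C_A(g)|\sum 1/|\text{relevant cyclic stabilizer}|$ over conjugacy classes of elliptic generators. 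Fermat, $Q_{48}$, $H_{32}$ and $H_{48}$ should survive, besides the Klein quartic. I expect the rest to fail for one of three reasons: they have no such $V_4$; their involutions are hyperelliptic with eight fixed points (giving $V$-quotient signature $(0;2^5)$ or similar); or they contain a hyperelliptic involution inside $V$, which changes the fixed-point count.

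I expect the main obstacle to be the genus-three uniqueness of the topological class. All five surfaces must lie on the same component, that of $K$, rather than on another component with the same signature $(0;2^6)$. I would settle this by showing that $V_4$-actions of signature $(0;2^6)$ in genus three form a single topological class. Surface-kernel epimorphisms $\Gamma(0;2^6)\to V_4$ are given by $6$-tuples of nontrivial elements with product $1$ that generate $V_4$. Each involution must have four fixed points, so each nontrivial element appears exactly twice. Such tuples form one orbit under the braid group together with $\mathrm{Aut}(V_4)$, which is a short Hurwitz-move check. Finally, I would verify the listed models and the orders $|\mathrm{Aut}|$ and signatures from the literature, using the explicit equations to exhibit the $V_4$ on each.
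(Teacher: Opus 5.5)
Your proposal is correct. Your genus-two argument is the paper's: the three-curve Singerman--Syddall list, with the order-$10$ curve discarded for lack of a $V_4$.

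In genus three, both proofs run through the finite list of rigid strata and filter by a suitable $V_4$, but they decide membership in the component differently. The paper splits the list into non-hyperelliptic and hyperelliptic strata and imports the normal-form criteria of Bouw et al.: a sign-change $V_4$ on a quartic model, respectively a subgroup $C_2^3$ containing a $V_4$ that avoids $h$. You instead characterize the component intrinsically, by generating vectors $\Gamma(0;2^6)\to V_4$ in which each involution occurs exactly twice. You then show by Hurwitz moves, which for abelian $V_4$ only permute entries, that these form a single topological class. This makes the argument self-contained and shows directly that all five curves lie on one component. The price is a case-by-case fixed-point count that the paper outsources to citations.

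Two points need correcting.

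First, the claim that $V_4$-actions of signature $(0;2^6)$ in genus three form a single class is false as stated. Multiplicities $(4,2,0)$ also satisfy the product relation; this type is realized by $V=\langle h,\,(x,y)\mapsto(-x,y)\rangle$ on $y^2=x^8-1$. For the same reason, a $V$ containing $h$ gives quotient $(0;2^6)$, not $(0;2^5)$ as in your parenthetical. What singles out the component is the four-fixed-points condition. You do impose it, so your conclusion survives.

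Second, your divisibility filter does not remove $y^2=x^7-x$, whose automorphism group $C_4\times S_3$ has order $24$ and which you never name. Its involutions are $h$ and the six lifts of $x\mapsto\zeta/x$ with $\zeta^3=-1$. Lifts over distinct reflections have a product of order divisible by $3$, so they do not commute. Hence every $V_4$ in this group contains $h$, and the curve is correctly rejected.

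Also apply the filter to full automorphism groups. A non-full triangle action such as $C_8\times C_2$ on $H_{32}$ fails it, although the curve qualifies.

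A useful shortcut: every involution of a smooth plane quartic fixes a line meeting the curve in four points. So on the non-hyperelliptic curves any $V_4\subset\operatorname{Aut}$ qualifies. On hyperelliptic curves, any $V_4\ni h$ has type $(4,2,0)$. Together these are exactly the Bouw et al.\ criteria the paper cites.
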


\begin{proof}
For a compact curve of genus at least two, the equisymmetric stratum associated with its full
automorphism group is zero-dimensional precisely when the full quotient is a three-cone sphere, that
is, when the curve is quasiplatonic. By \cite[Theorem~7.1]{SingermanSyddall}, there are exactly
three quasiplatonic surfaces of genus two, with full automorphism groups of orders $10$, $24$,
and $48$. The order-$10$ group is cyclic and contains no $V_4$; the other two surfaces are
$C_6$ and $B$, and both carry the distinguished $V_4$-action.

In genus three, the non-hyperelliptic zero-dimensional full-automorphism strata are represented by
the $C_9$-curve, $Q_{48}$, the Fermat quartic, and the Klein quartic
\cite[Table~1]{BergstromVanderGeer}. The first contains no $V_4$, whereas the other three contain
a sign-change $V_4$, or a conjugate, and hence belong to the present stratum by
\cite[Lemma~2]{BouwEtAl}. The hyperelliptic zero-dimensional strata have groups $C_{14}$, $U_6$,
$V_8$, and $C_2\times S_4$ \cite[Table~1]{GutierrezSevillaShaska}. Membership in the present
stratum requires a subgroup $C_2^3$ with a distinguished $V_4$ not containing the hyperelliptic
involution \cite[Lemma~1]{BouwEtAl}. This excludes $C_{14}$ and $U_6\simeq C_4\times S_3$,
leaving $H_{32}$ and $H_{48}$. The stated signatures follow from the corresponding triangle
actions and the Riemann--Hurwitz formula.
\end{proof}

In this paper we evaluate the Bolza surface and the Klein quartic. For $C_6$, identified over
$\mathbb C$ with the model $y^2=x^6+1$, the quotient by a non-hyperelliptic involution is an
elliptic curve with $j=0$; see \cite[Section~4 and Lemma~4.1]{FiteSutherland}. For the Fermat
quartic, the three sign-change quotients and their maps to the $V_4$-quotient are described
explicitly in \cite[Section~2.1]{BouwEtAl}. They are isomorphic elliptic curves with $j=1728$, in
agreement with the explicit CM elliptic factor and map in \cite[(3.1), (3.2), and
Proposition~3.1]{FiteLorenzoGarciaSutherland}. Thus the same $V_4$-relation and singular anomaly
formula can be used to evaluate the determinants at both points. This reduction also applies to
$Q_{48}$, $H_{32}$, and $H_{48}$ once the quotient maps associated with the distinguished
$V_4$-actions are written explicitly. We do not pursue these additional calculations here. Let us
also stress that the list is exhaustive only among quasiplatonic points, not among all points at
which compatible explicit quotient uniformizations may exist.

The equisymmetric deformations move the branch values on $S_{V,t}$ and $S_{K,t}$ while keeping
their orbifold orders and topological monodromy fixed. This changes the complex structure of the
covering surface while preserving the smoothness of its pulled-back hyperbolic metric.

\section{Critical points of the spectral determinant}

We consider $\log\det\Delta$ as a function on the Teichm\"uller space $\mathcal T_g$ of marked
compact Riemann surfaces of genus $g$, assigning to each conformal structure its unique metric of
curvature $-1$. The following assertion concerns the full Teichm\"uller space, not only an
equisymmetric stratum.

\begin{proposition}\label{prop:bolza-klein-critical}
Every compact quasiplatonic hyperbolic surface is a critical point of the spectral determinant
$\det\Delta$ on its Teichm\"uller space. In particular, this holds for the Bolza surface and the
Klein quartic.
\end{proposition}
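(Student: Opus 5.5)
The plan is to show that the differential of $\log\det\Delta$ at a quasiplatonic point $X$ is a $G$-invariant cotangent vector, where $G=\operatorname{Aut}^+(X)$. We then show that there are no nonzero such vectors.

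First we establish invariance. The mapping-class representatives of $G$ act on $\mathcal T_g$ by biholomorphisms fixing the point $[X]$. The function $\log\det\Delta$ is invariant under the whole mapping class group, because it depends only on the isometry class of the hyperbolic metric. Differentiating at $[X]$, we find that $d\log\det\Delta|_{[X]}$ is fixed by the induced linear action of $G$ on $T^*_{[X]}\mathcal T_g$. Since the function is real-analytic and real-valued, it suffices to treat the $(1,0)$-part $\partial\log\det\Delta$. This part lies in the holomorphic cotangent space, which is identified with the space $Q_2(X)$ of holomorphic quadratic differentials. Concretely, the Polyakov-type variational formula of D'Hoker--Phong, Zograf--Takhtajan and Sarnak (Wolpert) expresses $\partial\log\det\Delta$ as an explicit quadratic differential built from the metric. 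The two descriptions agree: that differential is $G$-invariant because every element of $G$ is an isometry of the hyperbolic metric.

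Second, we show that $Q_2(X)^G=0$. $G$-invariant quadratic differentials on $X$ descend to meromorphic quadratic differentials on the orbifold $X/G=\mathbb P^1(p,q,r)$. At a cone point of order $n$ they have at most a simple pole, since a local invariant $q=f(z)\,dz^2$ with $w=z^n$ becomes $w^{-2+2/n}\cdot(\text{holomorphic in } w)\,dw^2$. The pole order is therefore at most $\lfloor 2-2/n\rfloor=1$. Equivalently, $\dim Q_2(X)^G$ equals the dimension of the Teichmüller space of the orbifold $X/G$, which is $3\cdot0-3+3=0$. Concretely, a quadratic differential on $\mathbb P^1$ with at most three simple poles has degree $-4$. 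Writing it as $c\,dt^2/(t(t-1))$ in the normalization $0,1,\infty$, it is forced to have a pole of order $\ge 2$ at $\infty$, which is impossible unless $c=0$. Hence $\partial\log\det\Delta|_{[X]}=0$, and since the function is real, $d\log\det\Delta|_{[X]}=0$.

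The main obstacle is the identification of the invariant cotangent vector with an element of $Q_2(X)^G$. This requires the smoothness of $\log\det\Delta$ on $\mathcal T_g$ and the equivariance of the Bers/Teichmüller identification $T^{*(1,0)}_{[X]}\mathcal T_g\cong Q_2(X)$ under the stabilizer $G$. Both are standard: the action of $g\in G$ on $Q_2(X)$ is pushforward, and the identification is natural. I would cite the known variational formula to make this concrete rather than argue abstractly. Finally, the Bolza surface and the Klein quartic are quasiplatonic, with signatures $(2,3,8)$ and $(2,3,7)$, so the statement applies to both.
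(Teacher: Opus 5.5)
Your proposal is correct and follows essentially the same route as the paper. Both arguments use the $G$-invariance of $d\log\det\Delta$ at the fixed point, identify its $(1,0)$-part with a $G$-invariant holomorphic quadratic differential, descend this to $\mathbb P^1$ with at most simple poles at the three orbifold points, and conclude that it vanishes because $K_{\mathbb P^1}^2(p_1+p_2+p_3)$ has degree $-1$. One small slip: in your local computation the factor multiplying $w^{-2+2/n}$ is holomorphic in $z$, not in $w$; invariance gives $f(z)=z^{n-2}h(z^n)$, hence exactly $q=n^{-2}w^{-1}h(w)\,dw^2$, which is the form the paper uses.
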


\begin{proof}
Let $M$ be a compact quasiplatonic hyperbolic surface, let $G=\operatorname{Aut}(M)$, and put
$$
 F=\log\det\Delta.
$$
Since $F$ is invariant under pullback, its differential at $M$ is $G$-invariant. Hence its
$(1,0)$-part
$$
 \partial F_M\in T_M^{*(1,0)}\mathcal T_g
 \simeq H^0(M,K_M^2)
$$
corresponds to a $G$-invariant holomorphic quadratic differential $q$ on $M$.

The differential $q$ descends to a meromorphic quadratic differential $\widetilde q$ on the
orbifold quotient $M/G$, with at most simple poles at the orbifold points. Indeed, near a point
with stabilizer of order $m$, choose coordinates in which the quotient map is $w=z^m$, and write
$q=f(z)\,dz^2$. Invariance under $z\mapsto\zeta z$, where $\zeta^m=1$, gives
$$
 f(z)=z^{m-2}h(z^m)
$$
for a holomorphic function $h$. Hence
$$
 q=\frac{1}{m^2}\frac{h(w)}{w}\,dw^2,
$$
which proves the asserted descent and pole order.

Since $M$ is quasiplatonic, the quotient by its full automorphism group is a sphere with three
orbifold points $p_1,p_2,p_3$. Consequently,
$$
 \widetilde q\in
 H^0\!\left(\mathbb P^1,
 K_{\mathbb P^1}^2(p_1+p_2+p_3)\right).
$$
The line bundle in this expression has degree $-4+3=-1$,
and therefore has no nonzero holomorphic sections. Thus $\partial F_M=0$. Since $F$ is
real-valued, its $(0,1)$-part vanishes as well, and hence $dF_M=0$.
\end{proof}

At the Bolza and Klein points, the quotient orbifolds in \eqref{eq:bolza-stratum} and
\eqref{eq:klein-stratum} admit Belyi maps to the three-cone full quotients of signatures
$(0;2,3,8)$ and $(0;2,3,7)$, respectively. The same pullback calculation as in
\cite[Lemma~4.1]{KalvinASNS}, together with the singular anomaly formula, makes the individual
quotient first variations explicit. We do not record them here:
Proposition~\ref{prop:bolza-klein-critical} and the identities \eqref{eq:bolza-stratum-variation},
\eqref{eq:klein-stratum-variation} imply that their weighted sums vanish at the symmetric points.

\begin{remark}\label{rem:normalization}
The $V_4$-relations \eqref{eq:bolza-stratum} and \eqref{eq:klein-stratum} hold throughout the
corresponding equisymmetric components and require no Belyi maps. At the symmetric points considered
here, compatible Belyi maps provide explicit metric uniformizations and determine the quotient
determinants and their first variations.

The determinant values found here provide the normalization required to integrate explicit
first-variation formulas from the symmetric points. This information cannot be recovered from a
formula for $\partial\bar\partial\log\det\Delta$, which determines $\log\det\Delta$ only up to a
pluriharmonic function.
\end{remark}

\appendix

\section{An independent \texorpdfstring{$V_4$}{V4}-check}
\label{app:klein-v4-check}

The $V_4$-relation \eqref{eq:klein-stratum} gives a second route to $\det\Delta_K$. It is
considerably less economical than the $PSL(2,7)$-calculation in Section~4. We include a sketch
because it provides an independent check of \eqref{eq:klein-final-expanded}. We use the Ciani model
and the $V_4$-action described in \cite{Lachaud}:
$$
 K:\quad x^4+y^4+z^4+
 a(x^2y^2+y^2z^2+z^2x^2)=0,
 \qquad a^2+3a+18=0.
$$
The two roots are complex conjugate and produce conjugate models and maps. The corresponding
hyperbolic metrics are isometric, so the determinant calculation is independent of this choice. In
this model,
$$
 V=\{1,\sigma_x,\sigma_y,\sigma_z\},
 \qquad
 H_1=\langle\sigma_x\rangle,\quad
 H_2=\langle\sigma_y\rangle,\quad
 H_3=\langle\sigma_z\rangle,
$$
where $\sigma_x,\sigma_y,\sigma_z$ change the sign of the corresponding homogeneous coordinate.
The coordinate permutations permute $H_1,H_2,H_3$; hence the three quotient tori $K/H_j$ are
isometric. Denote any one of them by $T$, and put $S_K=K/V$. The signatures and areas of the
quotients are
$$
\begin{array}{ccl}
 T
 & \text{hyperbolic torus with four cone points of angle }\pi,
 & \Area(T)=4\pi,\\
 S_K
 & \text{hyperbolic sphere with six cone points of angle }\pi,
 & \Area(S_K)=2\pi.
\end{array}
$$

The quotient sphere is the conic
$$
 S_K:\quad
 X^2+Y^2+Z^2+a(XY+YZ+ZX)=0,
 \qquad [X:Y:Z]=[x^2:y^2:z^2].
$$
The sign changes and coordinate permutations generate a subgroup $N\simeq S_4$ of
$\operatorname{Aut}(K)$, with $V\triangleleft N$. Hence the residual group $N/V\simeq S_3$
acts on $S_K$ by permuting $X,Y,Z$. Set
$$
 \omega=e^{2\pi i/3},
 \qquad
 \mu=\frac{13-3a}{252}.
$$
On the chart $X+Y+Z=1$, choose the rational coordinate $s$ by
\[
 \begin{split}
 X&=\frac13+s+\frac{\mu}{s},\\
 Y&=\frac13+\omega s+\frac{\mu\omega^2}{s},\\
 Z&=\frac13+\omega^2s+\frac{\mu\omega}{s}.
 \end{split}
\]
The $S_3$-invariant function $q=XYZ$ represents the quotient map
$$
 S_K\longrightarrow S_K/S_3\simeq\mathbb P^1_q
$$
of degree $6$. In the coordinate $s$, it is
$$
 q(s)=s^3+\frac{(13-3a)^3}{252^3s^3}
      +\frac{9a-11}{756}.
$$

The two quotient metrics are pullbacks of the same hyperbolic three-cone sphere
$\mathbb P^1(2,3,7)$. The required maps factor as
$$
 T\xrightarrow{\pi}S_K\simeq\mathbb P^1_s
 \xrightarrow[\deg 6]{q}\mathbb P^1_q
 \xrightarrow[\deg 7]{\mathcal B}\mathbb P^1_z.
$$
Here $\pi:T=K/H_j\to S_K=K/V$ is the quotient map induced by $H_j\subset V$; its deck group is
$V/H_j\simeq\mathbb Z_2$. Thus $f_S=\mathcal B\circ q$ is used for the sphere factor and
$f_T=f_S\circ\pi$ for the torus factor. To write the last arrow explicitly, set
$$
 h(q)=21(14-a)q-(a+18),
$$
$$
 p(q)=(270-a)+(44492+5782a)q+
       (313502+71687a)q^2.
$$
Then the degree-seven Belyi map is
$$
 \mathcal B(q)=
 \left(\frac{h(q)}{h(0)}\right)^7
 \left(\frac{p(0)}{p(q)}\right)^3.
$$
Consider the divisor
$$
 \boldsymbol\beta=
 -\frac67\cdot0-\frac12\cdot1-\frac23\cdot\infty
$$
and let $\widehat m_{\boldsymbol\beta}$ be the unit-area metric of Gaussian curvature $-\pi/21$
on $\mathbb P^1_z$ representing $\boldsymbol\beta$. The composite $f_S=\mathcal B\circ q$ has
degree $42$ and passport $[\,7^6;\,1^6 2^{18};\,3^{14}\,]$. In the coordinate $s$ on $S_K$,
the double cover $\pi:T\to S_K$ has the model
$$
 W^2=s^4-\frac13s^3+
 \left(\frac19-\mu\right)s^2-\frac{\mu}{3}s+\mu^2.
$$
This elliptic curve is also isomorphic to
$$
 T:\quad v^2=w(w-1)(w+a+1);
$$
see \cite[Example~3]{Beauville}. Since $a^2+3a+18=0$, the standard Legendre formula gives
$j(T)=-3375$. Thus $T$ has complex multiplication of discriminant $-7$, and its normalized
period ratio may be chosen as $\tau=\frac{1+i\sqrt7}{2}$. Accordingly, $f_T$ has degree $84$
and passport $[\,7^{12};\,1^4 2^{40};\,3^{28}\,]$. Under a holomorphic pullback, a cone order
$\beta$ at a target point becomes $d(\beta+1)-1$ at a preimage of local degree $d$. The two
passports therefore show that all preimages of $0$ and $\infty$, as well as all ramified
preimages of $1$, are smooth. Only the unramified preimages of $1$ remain cone points of order
$-1/2$: there are six on $S_K$ and four on $T$. Multiplication by $\pi/21$ changes the
curvature to $-1$ and gives areas $2\pi$ and $4\pi$, respectively. Hence
$$
 \frac{\pi}{21}f_S^*\widehat m_{\boldsymbol\beta},
 \qquad
 \frac{\pi}{21}f_T^*\widehat m_{\boldsymbol\beta}
$$
are precisely the quotient hyperbolic metrics described above.

Applied to $f_S$, the Belyi-pullback formula \cite[Theorem~2.1]{KalvinASNS} produces a closed
explicit formula for $\det\Delta_{S_K}$. For $T$, the singular anomaly formula
\eqref{eq:conical-anomaly} compares the hyperbolic pullback metric with the flat metric
$\lvert\omega_T\rvert^2$, where $\omega_T=ds/W$. Its reference determinant follows from
$\tau=(1+i\sqrt7)/2$ and the Kronecker limit formula. Hence $\det\Delta_T$ is also found in
closed explicit form. Finally, at the Klein point, \eqref{eq:klein-stratum} reduces to
\be\label{eq:klein-v4-check-relation}
 \det\Delta_K
 =
 \frac{(\det\Delta_T)^3}{(\det\Delta_{S_K})^2}.
\ee
Substitution of these independently evaluated quotient determinants into
\eqref{eq:klein-v4-check-relation} reproduces the closed explicit formula
\eqref{eq:klein-final-expanded}. No quotient determinant or normalization from the
$PSL(2,7)$-calculation enters this argument, and the agreement therefore gives an independent
check of that computation.

\end{document}